\newcommand{\diag}{\mathrm{diag}}
\renewcommand{\mod}{\mathrm{\:mod\:}}
\theoremstyle{plain}
\newtheorem{theorem}{Theorem}
\newtheorem{lemma}{Lemma}
\newtheorem{proposition}{Proposition}
\newtheorem{corollary}{Corollary}
\theoremstyle{definition}
\newtheorem{remark}{Remark}
\newtheorem{example}{Example}
\newcounter{constnum}
\newcommand{\const}[1]{\refstepcounter{constnum}\label{#1}}
\newcounter{hpnum}
\newcommand{\hp}[1]{\refstepcounter{hpnum}\label{#1}}
\begin{document}

\title{Non-Asymptotic Pointwise and Worst-Case Bounds for Classical Spectrum Estimators}

\author{Andrew Lamperski
  \thanks{This work was supported in part by NSF CMMI-2122856}
  \thanks{Electrical and Computer Engineering, University of Minnesota, \texttt{alampers@umn.edu}}
  }



\maketitle

\begin{abstract}
  Spectrum estimation is a fundamental methodology in the analysis of time-series data, with applications including medicine, speech analysis, and control design. The asymptotic theory of spectrum estimation is well-understood, but the theory is limited when the number of samples is fixed and finite. This paper gives non-asymptotic error bounds for a broad class of spectral estimators, both pointwise (at specific frequencies) and in the worst case over all frequencies. The general method is used to derive error bounds for the classical Blackman-Tukey, Bartlett, and Welch estimators. In particular, these are first non-asymptotic error bounds for Bartlett and Welch estimators.  
\end{abstract}

\begin{IEEEkeywords}
  Time series analysis, Machine learning, Nonparametric statistics
\end{IEEEkeywords}

\section{Introduction}
\IEEEPARstart{S}{p}ectrum estimation is the problem of estimating the power spectral density of a random signal from a finite collection of samples of a time-series. Its applications include analysis of heart and neural signals, identification of dynamic systems for control, and speech analysis \cite{stoica2005spectral}.

The asymptotic theory of spectrum estimation is well-understood \cite{stoica2005spectral,liu2010asymptotics}. Here, the behavior of the power spectral density estimate is characterized as the amount of data tends to infinity. Additionally, when the estimates are assumed to be Gaussian, the bias and variance of the estimates are known. 

In contrast, the non-asymptotic theory of spectral estimation is quite limited. The non-asymptotic theory aims to characterize the error of spectral estimates when the number of samples is fixed and finite. Existing works on non-asymptotic spectral analysis are \cite{fiecas2019spectral}, which analyzes smoothed periodogram estimates (not covered by this paper), and \cite{zhang2021convergence,veedu2021topology} which examine variants of the Blackman-Tukey estimator (similar to Theorem~\ref{thm:BTconvergence} of this paper). Other closely-related works are \cite{doddi2022efficient}, which gives a non-asymtotic analysis of regularized Weiner filters, \cite{chang2022statistical}, which derives central limit theorem-type results for the estimator class from \cite{zhang2021convergence}, and \cite{krampe2022frequency}, which builds a variety of hypothesis tests from the estimator class from \cite{zhang2021convergence}. 

Over the last decade, the theory of non-asymptotic statistical estimation has reached a substantial level of maturity, with good introductory texts given by \cite{wainwright2019high,vershynin2018high}. However, most work focuses on independent data. For time-series, non-trivial dependencies exist between the samples, precluding many of the techniques used for independent data. In the related area of dynamic system identification, \cite{hardt2018gradient,lee2020non,oymak2019non,tsiamis2019finite,sarkar2019near}, specialized methods have been developed to bound identification errors from dependent data.

The main contribution of this paper is a framework for deriving non-asymptotic error bounds for a broad class of spectrum estimators. These bounds hold pointwise in frequency and in the worst-case across all frequencies.  
We derive specific error bounds for Blackman-Tukey, Bartlett, and Welch estimators. In order to get explicit constants for all error bounds, we derive explicit constants in the classical Hanson-Wright inequality, which may be of independent interest.

The paper is arranged as follows. The problem and class of estimators are described in Section~\ref{sec:problem}. Section~\ref{sec:gen} gives the general framework for non-asymptotic error analysis and the errors of classical estimators are bounded in Section~\ref{sec:classical}. Conclusions are given in Section~\ref{sec:conclusion}. All proofs are in the appendices.

\paragraph*{Notation}
Random variables are denoted in bold, e.g. $\bx$. $\bbE[\bx]$ is the expected value of $\bx$, $\bbP(\bcE)$ is the probability of event $\bcE$. If $\bx$ is a scalar-valued random variable and $p\ge 1$, then $\|\bx\|_p = \left(\bbE\left[|\bx|^p\right]\right)^{1/p}$. 
If $M$ is a matrix, then $M^\top$ is the transpose, $M^\star$ is the conjugate transpose, and $\overline{M}$ is the complex conjuage. For a vector, $x$, and $p\in[1,\infty]$, $\|x\|_p$ is the $\ell_p$ norm, while for a matrix, $M$, $\|M\|_2$ denotes the induced $2$-norm (i.e. the maximum singular value), and $\|M\|_F$ denotes the Frobenius norm. $A\otimes B$ is the Kroneckter product of matrices $A$ and $B$. $1_{m\times n}$ and $0_{m\times n}$ are  the $m\times n$ matrices of ones and zeros, respectively. $I_n$ is the  $n\times n$ identity matrix.. $\bbN$ is the set of non-negative integers, $\bbZ$ is the set of  integers, $\bbR$ is the set of real numbers, and $\bbC$ is the set of complex numbers. $\diag(x)$ is the square matrix formed by placing the entries of a vector $x$ on the diagonal. The trace of a square matrix, $M$, is denoted by $\Tr(M)$. The ceiling function is denoted by $\ceil*{\cdot}$. The modulo operation between two numbers is denoted by $x \mod y$. In other words, if $x=ky+r$ for $k\in\bbZ$ and $r\in [0,y)$, then $x\mod y = r$.

\section{Problem Setup}
\label{sec:problem}

Let $\by[k]$ be a stationary zero-mean $\bbR^n$-valued discrete-time stochastic process with respective autocovariance sequence and power spectral density give by:
\begin{align*}
  R[k] &= \bbE\left[\by[i+k]\by[i]^\top\right] \\
  \Phi(s) &= \sum_{k=-\infty}^{\infty} e^{-j2\pi sk} R[k]
\end{align*}

We assume that one of the following conditions holds:
\begin{enumerate}[label=A\arabic*), series=assumption]
\item
  \label{a:gaussian}
  $\by[k]$ is Gaussian
\item
  \label{a:subgaussian}
  There is an impulse  response sequence $h[k]\in\bbR^{n\times m}$ such that
  $\by[k] = \sum_{\ell=-\infty}^{\infty} h[k-\ell]\bzeta[\ell]$, where $\bzeta[k]=\begin{bmatrix}\bzeta_1[k] & \cdots & \bzeta_m[k]\end{bmatrix}^\top$ such that for $i=1,\ldots,m$ and for $k\in \bbZ$, $\bzeta_i[k]$ are independent $\sigma$-sub-Gaussian random variables.
\end{enumerate}

By $\sigma$-sub-Gaussian, we mean that $\bbE\left[e^{\lambda \bzeta_i[k]}\right]\le e^{\frac{\sigma^2\lambda^2}{2}}$ for all $\lambda\in\bbR$. Inequality (\ref{eq:subgaussianVar}) from Lemma~\ref{lem:subgaussianConstants} in Appendix~\ref{app:constants} implies that  $\sigma\ge 1$. 

In the case of Assumption~\ref{a:subgaussian}, we will have
\begin{equation}
  \label{eq:ftToSpec}
  \Phi(s) = H(s)H(-s)^\top = H(s)H(s)^\star,
\end{equation}
where $H$ is the discrete-time Fourier transform of $h$.

Let $\hat \bPhi(s)$ be an estimate of $\Phi(s)$ constructed from samples $\by[0],\ldots,\by[N-1]$. 
The main goals of this paper are to derive high-probability bounds on pointwise estimation error:
$$
\|\Phi(s)-\hat\bPhi(s)\|_2,
$$
for all $s\in  \left[-\frac{1}{2},\frac{1}{2}\right]$ and worst-case estimation error: 
$$
\sup_{s\in\left[-\frac{1}{2},\frac{1}{2}\right]}\|\Phi(s)-\hat\bPhi(s)\|_2.
$$

In both cases, the first step of the analysis is to bound the pointwise estimation error:
\begin{equation}
\label{eq:triangle}
\|\Phi(s)-\hat\bPhi(s)\|_2\le \left\|\Phi(s)\hspace{-1pt}-\hspace{-1pt}\bbE\left[\hat\bPhi(s)\right]\right\|_2
\hspace{-2pt}+
\left\|\hat \bPhi(s)\hspace{-1pt}-\hspace{-1pt}\bbE\left[\hat\bPhi(s)\right]\right\|_2,
\end{equation}
for all $s\in \left[-\frac{1}{2},\frac{1}{2}\right]$. 

The first term on the right of (\ref{eq:triangle}) corresponds to the bias of the estimate, while the second corresponds to the concentration of the estimate around its expected value.

To get concrete bounds on the bias and concentration terms, we need to explicitly fix the class of estimators considered. Let $\bY = \begin{bmatrix}\by[0] & \by[1] & \cdots & \by[N-1]\end{bmatrix} \in\bbR^{n\times N}$. We focus on estimators of the form 
\begin{equation}
  \label{eq:quadraticPeriodogram}
\hat\bPhi(s) = \bY D(-s)A D(s) \bY^\top
\end{equation}
where $D(s)=\mathrm{diag}\left(\begin{bmatrix}1 & e^{j2\pi s} &  \cdots & e^{j2\pi (N-1)s} \end{bmatrix}\right)$ and $A\in\bbR^{N\times N}$ is a symmetric matrix. 

\section{General Results}
\label{sec:gen}

This section gives a collection of error bounds on the class of estimators defined by (\ref{eq:quadraticPeriodogram}). In particular, we bound the pointwise concentration of $\hat\bPhi(s)$ to its mean, the worst-case concentration of $\hat\bPhi(s)$ to its mean, and the bias of the estimator. The pointwise concentration bounds can be expressed in terms of $A$. The worst-case and bias bounds require different quantities which can be derived from $A$. 



To prove worst-case bounds, it is helpful to re-write
(\ref{eq:quadraticPeriodogram}) as  
\begin{equation}
  \label{eq:expandedEstimator}
\hat\bPhi(s) = \sum_{k=-N+1}^{N-1}e^{-j2\pi sk} \bY B[k] \bY^\top 
\end{equation}
where $B[k]$ is defined by:
\begin{subequations}
  \label{eq:expandedMatrices}
\begin{align}
  d[k] &= \begin{cases}
        \begin{bmatrix}
          A_{k,0} & \cdots & A_{N-1,N-1-k}
        \end{bmatrix}^\top & k\ge 0 \\
         \begin{bmatrix}
          A_{0,|k|} & \cdots & A_{N-1-|k|,N-1}
        \end{bmatrix}^\top
        & k < 0       
    \end{cases} \\
  B[k] &= \begin{cases}
\setlength\arraycolsep{2pt}
    \begin{bmatrix}
      0_{k\times (N-k)} & 0_{k\times k} \\
      \diag(d[k]) & 0_{(N-k)\times k}
    \end{bmatrix} & k\ge 0 \\
\setlength\arraycolsep{2pt}
    \begin{bmatrix}
      0_{(N-|k|)\times |k|} & \diag(d[k]) \\
       0_{|k|\times |k|} & 0_{|k|\times (N-|k|)}
    \end{bmatrix} & k<0.
  \end{cases}
\end{align}
\end{subequations}

In the analysis, we will utilize:
\begin{subequations}
  \label{eq:expandedNorms}
\begin{align}
  \|B[k]\|_2 &= \|d[k]\|_\infty \\
  \|B[k]\|_F &= \|d[k]\|_2.
\end{align}
\end{subequations}

Now we describe the bias. 
The expected value of spectral estimators of the form (\ref{eq:quadraticPeriodogram}) can be expressed as
$$
\bbE\left[\hat\bPhi(s) \right] = \sum_{k=-N+1}^{N-1} e^{-j2\pi sk} b[k] R[k],
$$
where
\begin{equation}
  \label{eq:biasGeneral}
b[k] = \begin{cases}
  \sum_{i=k}^{N-1}A_{i,i-k} & 0\le k < N \\
  \sum_{i=|k|}^{N-1}A_{i+k,i} & -N < k < 0 \\
  0 & |k|\ge N.
\end{cases}
\end{equation}

Note that for $|k|<N$, $b[k]$ can be expressed equivalently as $b[k] = 1_{1\times (N-|k|)} d[k]$.

Now the bias can be expressed as:
\begin{subequations}
  \begin{align}
    \label{eq:biasCombined}
  \MoveEqLeft[0]
\Phi(s)-
  \bbE\left[\hat\bPhi(s) \right] = \sum_{k=-\infty}^{\infty}e^{-j2\pi sk}(1-b[k])R[k] \\
    \label{eq:biasSplit}
 & \hspace{-10pt} =\sum_{k=-N+1}^{N-1}e^{-j2 \pi sk}(1-b[k])R[k] +  \sum_{|\ell| \ge N} e^{-j2\pi s\ell} R[\ell]. 
\end{align}
\end{subequations}

From (\ref{eq:biasSplit}), we see that a small bias can only be obtained when $R[k]$ decays appropriately as $|k|\to \infty$. To this end, let
$$
\|R\|_1 = \sum_{k=-\infty}^{\infty} \|R[k]\|_2.
$$

We assume that $\|R\|_1 < \infty$. This is a typical assumption for the convergence of discrete-time Fourier transforms and holds in many common classes of processes.  For example, when $\Phi(s)=H(s)H(s)^\star$ where $H$ is a stable rational transfer matrix, we have that $\|R[k]\|_2 \le \gamma \rho^{|k|}$ for some constants $\gamma>0$ and $\rho \in [0,1)$. However, the assumption would fail in the case of bandlimited spectra such as
$$
\Phi(s) = \begin{cases} 1 & |s| \le W < \frac{1}{2}\\
  0 & |s| > W
  \end{cases}.
$$

Now we describe some specialized notation used to present our general results on the error of spectral estimators of the form (\ref{eq:quadraticPeriodogram}).

\const{ConcentrationMult}
\const{ConcentrationExp}
\const{ConcentrationSubGauss}

Define constants $c_{\ref{ConcentrationMult}}$, $c_{\ref{ConcentrationExp}}$, and $c_{\ref{ConcentrationSubGauss}}$ by
\begin{subequations}
\label{eq:constants}
  \begin{align}
    \textrm{Assumption}~\ref{a:gaussian} &\implies c_{\ref{ConcentrationMult}}=2,
                                           \quad  c_{\ref{ConcentrationExp}}=\frac{1}{32},
                                           &c_{\ref{ConcentrationSubGauss}}=1 \\
    \textrm{Assumption}~\ref{a:subgaussian} &\implies
                                              c_{\ref{ConcentrationMult}}=4,
                                              \quad c_{\ref{ConcentrationExp}}=2^{-19},
                                              &c_{\ref{ConcentrationSubGauss}}=\sigma.
  \end{align}
  \end{subequations}

Let $\|\Phi\|_{\infty}=\sup_{s\in\left[-\frac{1}{2},\frac{1}{2}\right]}\|\Phi(s)\|_2$. We assume that $\|\Phi\|_{\infty} < \infty$. 

For $\epsilon > 0 $ and $\delta \in (0,1)$  the following quantities will be used in the error bounds below:
\hp{pointwiseHP}
\hp{worstCaseGridHP}
\begin{subequations}
  \label{eq:collectedConstants}
\begin{align}
  \label{eq:dataLowerBound}
  \alpha(\epsilon) & = \max\left\{\frac{c_{\ref{ConcentrationSubGauss}}^4\|\Phi\|_{\infty}^2}{\epsilon^2},\frac{c_{\ref{ConcentrationSubGauss}}^2\|\Phi\|_\infty}{\epsilon}\right\} \\
  \beta(\delta) &= \frac{\log\left(\delta^{-1} 10^{2n}  c_{\ref{ConcentrationMult}}\right)}{c_{\ref{ConcentrationExp}}}\\
   \label{eq:minWindow}
  \hat M(\epsilon) &= \inf\left\{\tilde  M\in \bbN \middle| \sum_{|k|\ge \tilde M} \|R[k]\|_2 \le \frac{\epsilon}{2}\right\}.
\end{align}
\end{subequations}

Note that when $\|R[k]\|_2\le \gamma \rho^{|k|}$ for all $k$, we can bound $\hat M(\epsilon)\le \max\left\{0,\frac{\log\left(\frac{(1-\rho)\epsilon}{2\gamma} \right)}{\log \rho}\right\}$.

The following theorem gives sufficient conditions for achieving low estimation error with high probability. It is proved in Appendix~\ref{app:genConvergence}.

\begin{theorem}
  \label{thm:genConvergence}
   Define $\alpha$, $\beta$,  and $\hat M$ as in (\ref{eq:collectedConstants}). For all $\epsilon >0$ and all $\delta \in (0,1)$,
  \begin{enumerate}
  \item \label{it:concentration}
    If $\frac{1}{\max\{\|A\|_2,\|A\|_F^2\}}\ge \alpha(\epsilon)\beta(\delta)$, then for all $s\in \left[-\frac{1}{2},\frac{1}{2} \right] $ we have
    $$ \bbP\left(\left\|\hat\bPhi(s)-\bbE\left[\hat\bPhi(s)\right] \right\|_2 >\epsilon \right) \le \delta.$$
  \item \label{it:worst}
    Let $g\ge \max\{\|A\|_2,\|A\|_F^2\}$ and, $g\ge \max\{\|B[k]\|_2,\|B[k]\|_F^2\}$ for all $|k| < N$. Assume that there is a number $\hat N\le N$ such that  $B[k]=0$ for $|k|\ge \hat N$. If $\frac{1}{g}\ge \alpha(\epsilon/2)\left(\log(5\hat N^2) + \beta(\delta/2)\right)$ 
    then 
    $$
     \bbP\left(\sup_{s\in\left[-\frac{1}{2},\frac{1}{2}\right]}\left\|\hat\bPhi(s)-\bbE\left[\hat\bPhi(s)\right] \right\|_2 >\epsilon \right) \le \delta.
     $$
   \item \label{it:bias}
     Assume that $b[k]\in [0,1]$ for all $k\in \bbZ$. If $b[k]\ge 1-\frac{\epsilon}{2\|R\|_1}$ for $|k|< \hat M(\epsilon)$, then
     $$
     \sup_{s\in \left[-\frac{1}{2},\frac{1}{2}\right]} \left\|\Phi(s)-\bbE\left[\hat\bPhi(s) \right]\right\|_2 \le \epsilon.
     $$
   \item \label{it:fullPointwise} If the conditions of both \ref{it:concentration}) and \ref{it:bias}) are satisfied, then for all $s\in \left[-\frac{1}{2},\frac{1}{2}\right]$ we have
     $$
     \bbP\left(\left\|\hat\bPhi(s)-\Phi(s) \right\|_2 >2\epsilon \right) \le \delta.$$
   \item \label{it:fullWorst} If the conditions of both \ref{it:worst}) and \ref{it:bias}) are satisfied then
      $$\bbP\left(\sup_{s\in\left[-\frac{1}{2},\frac{1}{2}\right]}\left\|\hat\bPhi(s)-\Phi(s) \right\|_2 >2\epsilon \right) \le \delta.$$
  \end{enumerate}
\end{theorem}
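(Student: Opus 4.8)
The plan is to prove the three substantive claims—pointwise concentration (part~\ref{it:concentration}), worst-case concentration (part~\ref{it:worst}), and the deterministic bias bound (part~\ref{it:bias})—and then read off parts~\ref{it:fullPointwise}) and \ref{it:fullWorst}) from them. For those last two I would apply the decomposition (\ref{eq:triangle}) (and its supremum over $s$), splitting the budget $2\epsilon$ as $\epsilon$ for the bias term and $\epsilon$ for the concentration term. Since part~\ref{it:bias}) bounds the bias by $\epsilon$ with certainty, the only random event is the one from part~\ref{it:concentration}) (resp.\ part~\ref{it:worst})), which has probability at most $\delta$; on its complement the two contributions add to at most $2\epsilon$, so no further probability splitting is needed at the final stage.

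I would dispatch the bias bound first, as it is purely deterministic. Starting from (\ref{eq:biasSplit}), taking operator norms and using $|e^{-j2\pi sk}|=1$ with the triangle inequality gives, uniformly in $s$, the estimate $\|\Phi(s)-\bbE[\hat\bPhi(s)]\|_2\le \sum_{|k|<N}|1-b[k]|\,\|R[k]\|_2 + \sum_{|\ell|\ge N}\|R[\ell]\|_2$. I then split the first sum at $|k|=\hat M(\epsilon)$. For $|k|<\hat M(\epsilon)$ the hypotheses $b[k]\in[0,1]$ and $b[k]\ge 1-\tfrac{\epsilon}{2\|R\|_1}$ force $|1-b[k]|\le\tfrac{\epsilon}{2\|R\|_1}$, so that block is at most $\tfrac{\epsilon}{2\|R\|_1}\sum_k\|R[k]\|_2=\tfrac{\epsilon}{2}$. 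The remaining terms, $\hat M(\epsilon)\le|k|<N$ together with the tail $|\ell|\ge N$, use only $|1-b[k]|\le 1$ and sum to $\sum_{|k|\ge\hat M(\epsilon)}\|R[k]\|_2\le\tfrac{\epsilon}{2}$ by the definition (\ref{eq:minWindow}) of $\hat M(\epsilon)$. Adding the halves yields $\epsilon$.

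The heart of the argument is the pointwise concentration (part~\ref{it:concentration}). Since $\hat\bPhi(s)-\bbE[\hat\bPhi(s)]$ is Hermitian, its operator norm equals $\sup_{\|u\|_2=1}|u^\star(\hat\bPhi(s)-\bbE[\hat\bPhi(s)])u|$ over the complex unit sphere, and I would replace this supremum by a maximum over a fixed $\tfrac14$-net, whose cardinality in $\bbC^n\cong\bbR^{2n}$ is at most $10^{2n}$—the source of the $10^{2n}$ factor in $\beta$—at the cost of a constant factor absorbed into $c_{\ref{ConcentrationExp}}$. For each net vector $u$, the scalar $u^\star(\hat\bPhi(s)-\bbE[\hat\bPhi(s)])u$ is a centered quadratic form: under \ref{a:subgaussian} I vectorize $\bY$ and use the filter representation to express it as a quadratic form in the i.i.d.\ $\sigma$-sub-Gaussian entries of $\bzeta$, while under \ref{a:gaussian} it is a Gaussian quadratic form directly. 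I then apply the Hanson--Wright inequality with explicit constants. The kernel, built from $D(-s)AD(s)$, the vector $u$, and (under \ref{a:subgaussian}) the filter map whose squared operator norm equals $\|\Phi\|_\infty$ by (\ref{eq:ftToSpec}), has induced-$2$ and Frobenius norms bounded by $\|\Phi\|_\infty\|A\|_2$ and $\|\Phi\|_\infty\|A\|_F$. Matching the two branches of the Hanson--Wright exponent—into which $c_{\ref{ConcentrationSubGauss}}$ enters as its fourth and second powers—against the two branches of the maximum defining $\alpha(\epsilon)$ shows that $\tfrac{1}{\max\{\|A\|_2,\|A\|_F^2\}}\ge\alpha(\epsilon)\beta(\delta)$ forces the exponent to exceed $\log(10^{2n}c_{\ref{ConcentrationMult}}/\delta)$, whence a union bound over the net gives failure probability at most $\delta$.

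For the worst case (part~\ref{it:worst}) I would build on the pointwise argument. By (\ref{eq:expandedEstimator}) and the hypothesis $B[k]=0$ for $|k|\ge\hat N$, every scalar form $u^\star(\hat\bPhi(s)-\bbE[\hat\bPhi(s)])u$ is a trigonometric polynomial in $s$ of degree below $\hat N$. I would therefore discretize $s$ over a grid of $O(\hat N^2)$ points and control the gaps between grid points with a Bernstein-type interpolation inequality for trigonometric polynomials, so that the supremum over $[-\tfrac12,\tfrac12]$ is governed by the grid maxima; combined with the sphere net this yields a single finite net. Running the part~\ref{it:concentration}) estimate at level $\epsilon/2$ and confidence $\delta/2$ over this enlarged net, with the count of frequency points folded into the additive term $\log(5\hat N^2)$, produces the stated condition. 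The main obstacle throughout is the explicit-constant bookkeeping: fixing the Hanson--Wright constants $c_{\ref{ConcentrationMult}}$ and $c_{\ref{ConcentrationExp}}$ in both the Gaussian and sub-Gaussian regimes, and tracking the interpolation constant together with the combined covering number so that, for part~\ref{it:worst}), everything assembles exactly into $\alpha(\epsilon/2)\bigl(\log(5\hat N^2)+\beta(\delta/2)\bigr)$.
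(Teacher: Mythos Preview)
Your treatment of parts~\ref{it:concentration}), \ref{it:bias}), \ref{it:fullPointwise}), and \ref{it:fullWorst}) matches the paper's: the Hanson--Wright-plus-net argument for pointwise concentration (with the kernel norms bounded through $\|\underline R\|_2\le\|\Phi\|_\infty$), the deterministic splitting of the bias sum at $\hat M(\epsilon)$, and the triangle-inequality assembly are all the same.

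For part~\ref{it:worst}) your route genuinely differs. The paper does \emph{not} use a Bernstein-type inequality. Instead, writing $\bM(s)=\sum_{|k|<\hat N}e^{-j2\pi sk}\bC[k]$ with $\bC[k]=\bY B[k]\bY^\top-\bbE[\bY B[k]\bY^\top]$, it bounds the gap between grid points by the crude estimate $|e^{-j2\pi sk}-e^{-j2\pi\hat sk}|\le 2\pi|s-\hat s||k|$, summed to $2\pi|s-\hat s|\hat N^2\max_{|k|<\hat N}\|\bC[k]\|_2$. A grid of spacing $1/(\pi\hat N^2)$ (hence $\le 5\hat N^2$ points) then gives $\|\bM\|_\infty\le\max_{\hat s}\|\bM(\hat s)\|_2+\max_{|k|<\hat N}\|\bC[k]\|_2$, and the two maxima are controlled \emph{separately}, each at level $\epsilon/2$ with probability budget $\delta/2$. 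This is precisely where the hypotheses $g\ge\max\{\|B[k]\|_2,\|B[k]\|_F^2\}$ enter---they are needed to apply Lemma~\ref{lem:matrixConcentration} with real $J=B[k]$ to concentrate each $\|\bC[k]\|_2$---and your outline never touches them. Your Bernstein idea is legitimate: the matrix version $\|\bM'(s)\|_2\le 2\pi\hat N\|\bM\|_\infty$ follows from the scalar inequality applied to each $u^\star\bM(\cdot)u$, and with a grid of only $O(\hat N)$ points would yield $\|\bM\|_\infty\le 2\max_{\hat s}\|\bM(\hat s)\|_2$ directly, without the $B[k]$ hypotheses or the second $\delta/2$ split. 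So your approach would actually prove something slightly stronger, but the specific shape of the stated condition---the $\hat N^2$, the $\beta(\delta/2)$, and the very presence of the $B[k]$ assumptions---are artifacts of the paper's coarser Lipschitz argument, and you will not recover them exactly by your route.
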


The following corollary gives alternative ways of expressing the error bounds from Theorem~\ref{thm:genConvergence}. It is proved in Appendix~\ref{app:corGen}
\begin{corollary}
  \label{cor:gen}
  \begin{enumerate}
  \item \label{it:pointwiseCor}
    Let $\xi(A)=\max\{\|A\|_2,\|A\|_F^2\}$. For all $s\in \left[-\frac{1}{2},\frac{1}{2}\right]$, and all $\delta\in (0,1)$, the following holds with probability at least $1-\delta$:
    \begin{multline*}
      \left\|\hat\bPhi(s)-\bbE\left[\hat\bPhi(s)\right]\right\|_2
      \\
      \le c_{\ref{ConcentrationSubGauss}}^2\|\Phi\|_{\infty}\max\{\xi(A)\beta(\delta),\sqrt{\xi(A)\beta(\delta)}\}
    \end{multline*}
  \item
\label{it:worstCor}
    Let $g\ge \max\{\|A\|_2,\|A\|_F^2\}$ and, $g\ge \max\{\|B[k]\|_2,\|B[k]\|_F^2\}$ for all $|k| < N$. Assume that there is a number $\hat N\le N$ such that  $B[k]=0$ for $|k|\ge \hat N$. Set $\hat\beta(\hat N,\hat \delta)= \log(5\hat N^2)+\beta\left(\frac{\delta}{2}\right)$. Then for all $\delta\in (0,1)$, the following bound holds with probability at least $1-\delta$:
     \begin{multline*}
      \left\|\hat\bPhi-\bbE\left[\hat\bPhi\right]\right\|_{\infty}\le
      \\
       2 c_{\ref{ConcentrationSubGauss}}^2\|\Phi\|_{\infty}\max\left\{g\hat\beta(\hat N,\delta),\sqrt{g\hat\beta(\hat N,\delta)}\right\}
     \end{multline*}
   \item \label{it:biasDecay}
     Assume that there are constants $\gamma >0$ and $\rho \in [0,1)$ such that $\|R[k]\|_2 \le \gamma \rho^{|k|}$ for all $k\in\bbZ$ and assume that $b[k]=0$ for all $|k|\ge \hat N$, where $\hat N\le N$. Then
     $$
     \left\|\Phi-\bbE\left[\hat\bPhi\right]\right\|_{\infty}\le \gamma \sum_{k=-\hat N+1}^{\hat N-1}|1-b[k]|\rho^{|k|}+\frac{2\gamma \rho^{\hat N}}{1-\rho}
     $$
   \item
\label{it:unknownNorm}
     If $\left\|\Phi-\bbE\left[\hat\bPhi\right]\right\|_{\infty} \le b$, and $a:= 2 c_{\ref{ConcentrationSubGauss}}^2\max\left\{g\hat\beta(\hat N,\delta),\sqrt{g\hat\beta(\hat N,\delta)}\right\}<1$, then with probability at least $1-\delta$
     $$
     \left\|\hat\bPhi-\Phi \right\|_{\infty}\le \frac{a\|\hat\bPhi\|_{\infty}+b}{1-a}
     $$
  \end{enumerate}
\end{corollary}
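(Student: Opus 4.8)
The plan is to bootstrap part~\ref{it:worstCor} into a fully data-dependent bound by eliminating the unknown quantity $\|\Phi\|_{\infty}$ that appears on its right-hand side. First I would observe that, by the very definition of $a$, the conclusion of part~\ref{it:worstCor} can be rewritten as the statement that, with probability at least $1-\delta$,
$$
\left\|\hat\bPhi-\bbE\left[\hat\bPhi\right]\right\|_{\infty}\le a\|\Phi\|_{\infty}.
$$
I would then carry out the remaining argument deterministically on the event that this single inequality holds, so that the final conclusion inherits the same probability $1-\delta$ without any union bound or additional failure probability.

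On that event, applying the triangle inequality together with the assumed bias bound $\left\|\Phi-\bbE[\hat\bPhi]\right\|_{\infty}\le b$ gives
$$
\left\|\hat\bPhi-\Phi\right\|_{\infty}\le\left\|\hat\bPhi-\bbE[\hat\bPhi]\right\|_{\infty}+\left\|\bbE[\hat\bPhi]-\Phi\right\|_{\infty}\le a\|\Phi\|_{\infty}+b.
$$
The obstacle is that the right-hand side still references the unknown $\|\Phi\|_{\infty}$; the key idea is that $\|\Phi\|_{\infty}$ can be controlled by the computable estimator norm $\|\hat\bPhi\|_{\infty}$ at the cost of reintroducing the very error we are trying to bound. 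Specifically, a second triangle inequality gives $\|\Phi\|_{\infty}\le\|\Phi-\hat\bPhi\|_{\infty}+\|\hat\bPhi\|_{\infty}$.

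Substituting this into the previous display yields the self-referential inequality
$$
\left\|\hat\bPhi-\Phi\right\|_{\infty}\le a\left\|\hat\bPhi-\Phi\right\|_{\infty}+a\|\hat\bPhi\|_{\infty}+b.
$$
Because the hypothesis $a<1$ guarantees $1-a>0$, I would collect the $\left\|\hat\bPhi-\Phi\right\|_{\infty}$ terms on the left and divide by $1-a$ to obtain the claimed bound $\left\|\hat\bPhi-\Phi\right\|_{\infty}\le\frac{a\|\hat\bPhi\|_{\infty}+b}{1-a}$. No genuinely hard step arises here; the only subtlety worth stating explicitly is that the strict inequality $a<1$ is exactly what makes the rearrangement legitimate, and that the entire deterministic chain lives on the single high-probability event supplied by part~\ref{it:worstCor}.
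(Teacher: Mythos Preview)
Your argument for part~\ref{it:unknownNorm}) is correct and matches the paper's proof essentially line for line: both invoke part~\ref{it:worstCor}) to get $\|\hat\bPhi-\bbE[\hat\bPhi]\|_{\infty}\le a\|\Phi\|_{\infty}$, apply the triangle inequality with the bias bound $b$, replace $\|\Phi\|_{\infty}$ by $\|\hat\bPhi\|_{\infty}+\|\hat\bPhi-\Phi\|_{\infty}$, and rearrange using $a<1$. The paper also briefly dispatches parts~\ref{it:pointwiseCor})--\ref{it:biasDecay}) (the first two by inverting $\alpha$ in Theorem~\ref{thm:genConvergence}, the third by direct calculation from~(\ref{eq:biasCombined})), which your proposal does not address, but for part~\ref{it:unknownNorm}) itself there is nothing to add.
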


\begin{remark}
  \label{rem:tradeoffs}
  In the Blackman-Tukey, Bartlett, and Welch algorithms discussed below, the number $\hat N\le N$ is a tunable parameter that can be used to specify a trade-off between bias and variance. In each of these algorithms, we will have $g=O(\hat N/N)$, so the probabilistic error bound from part~\ref{it:worstCor}) scales as $ O\left(\sqrt{\frac{\hat N \log \hat N}{N}}\right)$ in each of these cases. In particular, the bound from part~\ref{it:worstCor}) increases monotonically with $\hat N$, while the bound from part~\ref{it:biasDecay}) typically decreases monotonically with $\hat N$. In the next section, we will give explicit bounds for the Bartlett estimator, and show how to optimize over $\hat N$ to give a total error bound of $\|\hat\bPhi-\Phi\|_{\infty}=\tilde O(N^{-1/3})$, ignoring logarithmic factors. Similar bounds are likely possible for Blackman-Tukey and Welch estimators, but these will depend on the specific window functions used for these methods.  
\end{remark}

\begin{remark}
  \label{rem:bias}
  To use the bounds from Corollary~\ref{cor:gen} in practice, we need some assumptions about the decay of the autocovariance, we can bound the bias, as in part~\ref{it:biasDecay}). (See the next paragraph for more details.) These assumptions could be obtained from domain knowledge, such as time constant estimates or prior noise characterizations. Then, part~\ref{it:unknownNorm}) can be used to derive bounds on the total worst-case error just from the bound on the bias, $b$, the estimated spectrum, $\hat\bPhi$, and the term $a$, which scales like $\tilde O\left(\sqrt{\hat N/N}\right)$. As discussed in Remark~\ref{rem:tradeoffs}, the truncation parameter, $\hat N$, can typically be tuned to optimize the resulting bound. 

Unfortunately, it is not possible to estimate the autocovariance decay parameters, $\gamma$ and $\rho$, without some assumptions.
Indeed, consider the pathological autocovariance sequence
$$
R[k]=\begin{cases}
  2 & k=0 \\
  1 & k=\pm D \\
  0 & k\notin \{-D,0,D\},
\end{cases}
$$
which could be obtained by running white noise through the filter with impulse response $h[k]=\delta[k]+\delta[k-D]$, where $\delta[\cdot]$ is the Kronecker delta. This signal 
would be indistinguishable from white noise when the data set has size $N<D$, and so the decay constants from part~\ref{it:biasDecay}) would artificially appear to be $\gamma=2$ and $\rho=0$. In reality, the constants would need to satisfy $\gamma \rho^D \ge 1$.
\end{remark}

\begin{remark}
  \label{rem:asymptotic}

  In numerical experiments in Section~\ref{sec:numerical}, we see that the bounds for Gaussian variables are rather conservative (1-2 orders of magnitude greater than true error), while the bounds for sub-Gaussian variables are highly conservative (5-8 orders of magnitude than true error). Decreasing the gap between Gaussian and sub-Gaussian bounds would require improving the constants in the Hanson-Wright inequality, which is outside of the scope of this paper. 

  In contrast, the bounds obtained from asymptotic analysis are comparatively tight, often on the same order of magnitude of the true error. See, e.g. Section 5.7 of \cite{brillinger1981time}.
  While these asymptotic bounds are  less conservate, they rely on unquantified approximations. Specifically, they utilize asymptotic distributions without quantifying the error induced by approximating the distribution with its asymptotic distribution.

  The existing asymptotic results indicate that more precise, frequency-dependent bounds that depend on fewer assumptions should be obtainable. For scalar signals, the asymptotic variance scales with $\Phi(s)^2$ for  smoothed periodograms \cite{brillinger1981time} and the Blackman-Tukey method \cite{liu2010asymptotics}. The bounds in \cite{brillinger1981time}, for example, just rely on bounds of various moments and cumulants, rather than assumptions of Gaussian or sub-Gaussian distributions. 
  In contrast, the non-asymptotic bounds from part~\ref{it:concentration}) of Theorem~\ref{thm:genConvergence} and part~\ref{it:pointwiseCor}) of Corollary~\ref{cor:gen} are the same across frequency. The asymptotic results indicate it may be possible to obtain more precise error bounds that depend on the specific value of $\Phi(s)$ at frequency $s$. Furthermore, it may be possible to relax the Gaussian/sub-Gaussian assumptions, though this would require a fundamentally different proof approach.
\end{remark}

\section{Error Bounds for Specific Classical Spectrum Estimators}
\label{sec:classical}

This section shows how to analyze periodograms, Blackman-Tukey estimators, Bartlett estimators, and Welch estimators in terms of the general result from \ref{thm:genConvergence}. In particular, high probability error bounds are obtained in the case of Blackman-Tukey, Bartlett, and Welch estimators. For periodograms, the bias is bounded, but high-probability bounds cannot be obtained, consistent with classical calculations on variance of periodograms. (See \cite{stoica2005spectral}.)

The definitions of the various estimators follows the presentation from \cite{stoica2005spectral}, and it is shown how each estimator can be expressed in the form of (\ref{eq:quadraticPeriodogram}). This leads to a unified approach to error analysis. All of the propositions and theorems of this section are proved in Appendix~\ref{app:specific}.

\subsection{Periodograms}

The standard biased autocovariance sequence estimate is defined by
\begin{equation}
  \label{eq:biasedACS}
\hat \bR[k] = \begin{cases}\frac{1}{N}\sum_{i=k}^{N-1}\by[i]\by[i-k]^\top & 0\le k < N \\
    \frac{1}{N}\sum_{i=-k}^{N-1} \by[i+k] \by[i]^\top & -N < k < 0 \\
    0 & |k| \ge N
  \end{cases}
\end{equation}
The corresponding periodogram is given by
$$
\hat \bPhi(s)=\sum_{k=-N+1}^{N+1}e^{-j2\pi sk} \hat \bR[k].
$$
In this case, $\hat \bPhi(s)$ can be expressed in the form of (\ref{eq:quadraticPeriodogram}) with $A = \frac{1}{N} 1_{N\times N}$, the scaled matrix of ones. Here we have $\|A\|_2=\|A\|_F=1$. As a result, the conditions of Theorem~\ref{thm:genConvergence} Part \ref{it:concentration}) on pointwise error cannot be met for $\xi\ge 1$.  Similarly, the conditions of Part~\ref{it:worst}) cannot be met.
So, the most we can bound using Theorem~\ref{thm:genConvergence} is the bias:
\begin{proposition}
  \label{prop:biasedPeriodogram}
  Let $\hat M(\epsilon)$ be defined in (\ref{eq:collectedConstants}).
  If $N\ge \frac{2 \hat M(\epsilon) \|R\|_1}{\epsilon}$, then
  $$
  \sup_{s\in \left[-\frac{1}{2},\frac{1}{2}\right]} \left\|\Phi(s)-\bbE\left[\hat\bPhi(s) \right]\right\|_2 \le \epsilon.
  $$
\end{proposition}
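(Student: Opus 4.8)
The plan is to reduce the statement entirely to part~\ref{it:bias}) of Theorem~\ref{thm:genConvergence}. Indeed, the biased periodogram has the form (\ref{eq:quadraticPeriodogram}) with $A=\frac{1}{N}1_{N\times N}$, and part~\ref{it:bias}) is precisely a bound on the bias quantity $\sup_{s}\left\|\Phi(s)-\bbE\left[\hat\bPhi(s)\right]\right\|_2$ that appears in the conclusion. So the only work is to compute the window coefficients $b[k]$ for this particular $A$ and to verify the two hypotheses of part~\ref{it:bias}): that $b[k]\in[0,1]$ for all $k\in\bbZ$, and that $b[k]\ge 1-\frac{\epsilon}{2\|R\|_1}$ whenever $|k|<\hat M(\epsilon)$.

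First I would evaluate $b[k]$ from (\ref{eq:biasGeneral}). Since every entry of $A=\frac{1}{N}1_{N\times N}$ equals $\frac{1}{N}$, for $0\le k<N$ we obtain $b[k]=\sum_{i=k}^{N-1}\frac{1}{N}=\frac{N-k}{N}$, and the symmetric sum for $-N<k<0$ gives $b[k]=\frac{N-|k|}{N}$. Together with $b[k]=0$ for $|k|\ge N$, this is exactly the triangular window
$$
b[k]=\begin{cases}1-\frac{|k|}{N} & |k|<N\\ 0 & |k|\ge N.\end{cases}
$$
Because $0\le\frac{|k|}{N}\le 1$ for $|k|\le N$, we have $b[k]\in[0,1]$ for every $k$, giving the first hypothesis immediately.

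Next I would verify the decay hypothesis. For $|k|<N$ one has $1-b[k]=\frac{|k|}{N}$, which for any index satisfying $|k|<\hat M(\epsilon)$ is at most $\frac{\hat M(\epsilon)}{N}$. The assumed sample-size bound $N\ge\frac{2\hat M(\epsilon)\|R\|_1}{\epsilon}$ rearranges to $\frac{\hat M(\epsilon)}{N}\le\frac{\epsilon}{2\|R\|_1}$, so $1-b[k]\le\frac{\epsilon}{2\|R\|_1}$, i.e. $b[k]\ge 1-\frac{\epsilon}{2\|R\|_1}$, for all $|k|<\hat M(\epsilon)$. With both hypotheses of part~\ref{it:bias}) in hand, that result yields the claimed bound $\sup_{s\in\left[-\frac{1}{2},\frac{1}{2}\right]}\left\|\Phi(s)-\bbE\left[\hat\bPhi(s)\right]\right\|_2\le\epsilon$.

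I expect no genuine obstacle: the whole content is packaged inside the general bias bound, and what remains is the bookkeeping of specializing $b[k]$ to the matrix of ones and translating the condition on $N$ into the window-decay condition. The only point requiring a moment of care is the boundary behavior in the definition of $\hat M(\epsilon)$ as an infimum over $\bbN$ together with the strict inequality $|k|<\hat M(\epsilon)$; but since $\frac{|k|}{N}$ is monotone increasing in $|k|$, the estimate at the largest relevant index controls all smaller ones and the argument goes through.
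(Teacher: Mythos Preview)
Your proposal is correct and follows essentially the same route as the paper's own proof: compute $b[k]=1-|k|/N$ for the biased periodogram, note $b[k]\in[0,1]$, and observe that the hypothesis $N\ge \frac{2\hat M(\epsilon)\|R\|_1}{\epsilon}$ is exactly what is needed to make $b[k]\ge 1-\frac{\epsilon}{2\|R\|_1}$ for all $|k|<\hat M(\epsilon)$, so that part~\ref{it:bias}) of Theorem~\ref{thm:genConvergence} applies. The paper's proof is terser but identical in substance.
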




The unbiased autocovariance sequence estimate is given by:
\begin{equation*}
 \tilde \bR[k] = \begin{cases}\frac{1}{N-|k|}\sum_{i=k}^{N-1}\by[i]\by[i-k]^\top & 0\le k < N \\
    \frac{1}{N-|k|}\sum_{i=-k}^{N_1} \by[i+k] \by[i]^\top & -N < k < 0 \\
    0 & |k| \ge N
  \end{cases} 
\end{equation*}
The unbiased\footnote{The autocovarience  sequence estimate is unbiased in this case. However, the periodogram itself is biased since we are not measuring correlations more than $N$ steps apart} periodogram estimate is 
$$
\hat\bPhi(s) = \sum_{-k=-N+1}^{N-1} e^{-j2\pi s k} \tilde \bR[k] = \bY D(-s)AD(s)\bY^\top,
$$
where $A$  is a Toeplitz matrix given by:
$$
A = \begin{bmatrix}
  \frac{1}{N} & \frac{1}{N-1} & \cdots & \frac{1}{1} \\
  \frac{1}{N-1}& \frac{1}{N} & \cdots & \frac{1}{2} \\
  \vdots & \vdots & & \vdots \\
  \frac{1}{1} & \frac{1}{2} & \cdots & \frac{1}{N}
  \end{bmatrix}
$$
In this unbiased case,
  $$
  1 \le \left(\frac{1}{\sqrt{N}}1_{N\times 1}\right)^\top A 
\left(\frac{1}{\sqrt{N}}1_{N\times 1}\right) \le \|A\|_2\le \|A\|_F,
  $$
  for all values of $N$.
As a result, the conditions of Theorem~\ref{thm:genConvergence} Part \ref{it:concentration}) on pointwise error cannot be met for $\xi\ge 1$.  Similarly, the conditions of Part~\ref{it:worst}) cannot be met.
 Again, all we can bound is the bias:
  \begin{proposition}
    \label{prop:unbiasedPeriodogram}
  Let $\hat M$ be defined in (\ref{eq:collectedConstants}).
  If $N\ge \hat M(\epsilon)$, then
  $$
  \sup_{s\in \left[-\frac{1}{2},\frac{1}{2}\right]} \left\|\Phi(s)-\bbE\left[\hat\bPhi(s) \right]\right\|_2 \le \epsilon.
  $$
  \end{proposition}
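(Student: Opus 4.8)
The plan is to apply part~\ref{it:bias}) of Theorem~\ref{thm:genConvergence}, so that the whole task reduces to computing the bias coefficients $b[k]$ for the Toeplitz matrix $A$ defining the unbiased periodogram and then checking the hypotheses of that part. First I would show that $b[k]=1$ for all $|k|<N$. From the general formula (\ref{eq:biasGeneral}), for $0\le k<N$ we have $b[k]=\sum_{i=k}^{N-1}A_{i,i-k}$. Along the subdiagonal at distance $k$ every entry of the given Toeplitz matrix equals $\frac{1}{N-k}$, and the sum runs over exactly $N-k$ such entries, so $b[k]=(N-k)\cdot\frac{1}{N-k}=1$. The same bookkeeping applied to the second branch of (\ref{eq:biasGeneral}) gives $b[k]=1$ for $-N<k<0$, while $b[k]=0$ for $|k|\ge N$. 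This is of course just the analytic counterpart of the fact that $\tilde\bR[k]$ is an unbiased estimate of $R[k]$ for $|k|<N$.

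With $b[k]$ in hand, the hypotheses of part~\ref{it:bias}) are immediate. Clearly $b[k]\in[0,1]$ for every $k\in\bbZ$. Since $N\ge\hat M(\epsilon)$, every index with $|k|<\hat M(\epsilon)$ also satisfies $|k|<N$, and therefore $b[k]=1\ge 1-\frac{\epsilon}{2\|R\|_1}$ throughout that range. Part~\ref{it:bias}) of Theorem~\ref{thm:genConvergence} then yields the claimed worst-case bias bound directly.

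Alternatively, I could bypass the theorem and argue straight from (\ref{eq:biasSplit}): because $b[k]=1$ for $|k|<N$, the finite sum in (\ref{eq:biasSplit}) vanishes and only the tail survives, giving $\|\Phi(s)-\bbE[\hat\bPhi(s)]\|_2\le\sum_{|\ell|\ge N}\|R[\ell]\|_2$ for every $s$. The tail sum $\sum_{|\ell|\ge\tilde M}\|R[\ell]\|_2$ is nonincreasing in $\tilde M$, so $N\ge\hat M(\epsilon)$ together with the defining property of $\hat M(\epsilon)$ in (\ref{eq:minWindow}) bounds this uniformly in $s$ by $\frac{\epsilon}{2}\le\epsilon$.

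There is no serious obstacle here; the only step requiring any care is the subdiagonal counting argument that establishes $b[k]=1$, together with the observation that the assumption $N\ge\hat M(\epsilon)$ is exactly what places the range $|k|<\hat M(\epsilon)$ inside the region where the estimator is unbiased. This also explains structurally why the unbiased periodogram needs only $N\ge\hat M(\epsilon)$, in contrast to the extra factor $\frac{\|R\|_1}{\epsilon}$ that appears in Proposition~\ref{prop:biasedPeriodogram} for the biased variant, where $b[k]<1$ on the truncation range.
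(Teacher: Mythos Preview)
Your proposal is correct and follows essentially the same approach as the paper: compute $b[k]=1$ for all $|k|<N$ from the Toeplitz structure of $A$, then invoke part~\ref{it:bias}) of Theorem~\ref{thm:genConvergence} using the assumption $N\ge\hat M(\epsilon)$. The paper's proof is just a terse version of your main argument (it simply asserts $b[k]=1$ and applies the bias condition), and your alternative direct route via (\ref{eq:biasSplit}) is a valid but unnecessary supplement.
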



  \subsection{Blackman-Tukey Estimators}
  Let $\hat \bR[k]$ be the biased autocovariance sequence estimate from (\ref{eq:biasedACS}). For $M\le N$ and a window function $w:\bbZ\to \bbR$
  define the Blackman-Tukey estimate by:
  $$
  \hat\Phi(s) = \sum_{k=-M+1}^{M-1} e^{-j2\pi s k} w[k] \hat \bR[k]
  $$
  In this case, $\hat\bPhi$ can be expressed as in (\ref{eq:quadraticPeriodogram}), where $A$ is a Toeplitz matrix defined by:
  \begin{equation}
    \label{eq:ABT}
  A = \frac{1}{N}
\setlength\arraycolsep{2pt}
  \begin{bmatrix}
    w[0] & \cdots & w[-M+1] &&& 0 \\
    \vdots & && & \ddots \\
    w[M-1] &&\ddots&&& w[-M+1] \\
    &\ddots&&&& \vdots \\
    0&&w[M-1]&&\cdots& w[0]
  \end{bmatrix}.
\end{equation}
For symmetry of $A$, we must have $w[k]=w[-k]$.

  For many common windows, such as the rectangular, Bartlett, Hann, Hamming, and Blackman windows, the entries satisfy $w[i] \in [0,1]$ for $i=-M+1,\ldots,M-1$. Under these assumptions, the theorem below gives sufficient conditions for the Blackman-Tukey method to give low error with high probability. The bounds on $\|\hat\bPhi(s)-\Phi(s)\|_2$ are omitted, as  they are direct consequences of parts \ref{it:fullPointwise}) and \ref{it:fullWorst}) of Theorem~\ref{thm:genConvergence}.

  \begin{theorem}
    \label{thm:BTconvergence}
    Define $\alpha$, $\beta$, and $\hat M$ as in (\ref{eq:collectedConstants}).
  \begin{enumerate}
  \item \label{it:BTconcentration}
    If $\frac{N}{2M-1}\ge \alpha(\epsilon)\beta(\delta)$, then for all $s\in \left[-\frac{1}{2},\frac{1}{2} \right] $ we have
    $$ \bbP\left(\left\|\hat\bPhi(s)-\bbE\left[\hat\bPhi(s)\right] \right\|_2 >\epsilon \right) \le \delta.$$
  \item \label{it:BTworst}
    If $\frac{N}{2M-1}\ge \alpha(\epsilon/2)\left(\log(5M^2)+\beta(\delta/2)\right)$ then 
    $$
     \bbP\left(\sup_{s\in\left[-\frac{1}{2},\frac{1}{2}\right]}\left\|\hat\bPhi(s)-\bbE\left[\hat\bPhi(s)\right] \right\|_2 >\epsilon \right) \le \delta.
     $$
   \item \label{it:BTbias}
If  $M\ge \hat M$, $N \ge \frac{2\hat M \|R\|_1}{\epsilon}$, $w[k]\ge \frac{1-\frac{\epsilon}{2\|R\|_1}}{1-\frac{|k|}{N}}$ for $|k|<\hat M$, and $w[k] \in [0,1]$ for $|k| \ge \hat M$, then
     $$
     \sup_{s\in \left[-\frac{1}{2},\frac{1}{2}\right]} \left\|\Phi(s)-\bbE\left[\hat\bPhi(s) \right]\right\|_2 \le \epsilon.
     $$
  \end{enumerate}
\end{theorem}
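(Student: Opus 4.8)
The plan is to prove all three parts by specializing Theorem~\ref{thm:genConvergence} to the Toeplitz matrix $A$ of (\ref{eq:ABT}), so that the whole argument reduces to computing (or bounding) the quantities $\|A\|_2$, $\|A\|_F$, the diagonal blocks $B[k]$, and the bias coefficients $b[k]$ for this specific $A$, and then checking that the stated hypotheses imply those of the corresponding parts of Theorem~\ref{thm:genConvergence}. Throughout I use $w[k]\in[0,1]$ and $w[k]=w[-k]$, so $A$ is a symmetric banded Toeplitz matrix whose $k$-th diagonal equals the constant $w[k]/N$ for $|k|<M$ and vanishes for $|k|\ge M$.

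For part~\ref{it:BTconcentration}) I would bound $\max\{\|A\|_2,\|A\|_F^2\}$. Since $A$ is symmetric, $\|A\|_2$ is at most the maximum absolute row sum, and each row sums to at most $\frac{1}{N}\sum_{|k|<M}|w[k]|\le \frac{2M-1}{N}$. For the Frobenius norm I count entries by diagonal: the $k$-th diagonal has $N-|k|$ entries each equal to $w[k]/N$, so $\|A\|_F^2=\frac{1}{N^2}\sum_{|k|<M}(N-|k|)w[k]^2\le \frac{2M-1}{N}$, using $w[k]^2\le 1$ and $N-|k|\le N$. Hence $\max\{\|A\|_2,\|A\|_F^2\}\le \frac{2M-1}{N}$, and the hypothesis $\frac{N}{2M-1}\ge \alpha(\epsilon)\beta(\delta)$ gives $\frac{1}{\max\{\|A\|_2,\|A\|_F^2\}}\ge \alpha(\epsilon)\beta(\delta)$, so part~\ref{it:concentration}) of Theorem~\ref{thm:genConvergence} applies directly.

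For part~\ref{it:BTworst}) I need the quantities entering part~\ref{it:worst}) of Theorem~\ref{thm:genConvergence}. The $k$-th diagonal vector $d[k]$ is constant equal to $w[k]/N$ with $N-|k|$ entries for $|k|<M$, and vanishes for $|k|\ge M$; hence $B[k]=0$ for $|k|\ge M$ and I may take $\hat N=M$, giving $\log(5\hat N^2)=\log(5M^2)$. Using (\ref{eq:expandedNorms}), $\|B[k]\|_2=\|d[k]\|_\infty=|w[k]|/N\le 1/N$ and $\|B[k]\|_F^2=\|d[k]\|_2^2=(N-|k|)w[k]^2/N^2\le 1/N$, both dominated by the bound $\frac{2M-1}{N}$ on $\max\{\|A\|_2,\|A\|_F^2\}$ from the previous paragraph. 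Thus $g=\frac{2M-1}{N}$ is a valid common bound, and the hypothesis $\frac{N}{2M-1}\ge \alpha(\epsilon/2)(\log(5M^2)+\beta(\delta/2))$ is exactly the condition required by part~\ref{it:worst}).

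For part~\ref{it:BTbias}) I would compute the bias coefficients from (\ref{eq:biasGeneral}): summing the constant $k$-th diagonal yields $b[k]=(N-|k|)w[k]/N=w[k](1-|k|/N)$ for $|k|<M$ and $b[k]=0$ otherwise. Since $w[k]\in[0,1]$ and $1-|k|/N\in[0,1]$, we have $b[k]\in[0,1]$. For $|k|<\hat M\le M$, the hypothesized window lower bound $w[k]\ge \frac{1-\epsilon/(2\|R\|_1)}{1-|k|/N}$ gives $b[k]\ge 1-\frac{\epsilon}{2\|R\|_1}$; moreover the condition $N\ge \frac{2\hat M\|R\|_1}{\epsilon}$ forces $|k|/N\le \frac{\epsilon}{2\|R\|_1}$ on this range, which is precisely what makes the prescribed lower bound on $w[k]$ at most $1$ and hence attainable by a valid window. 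With these facts, part~\ref{it:bias}) of Theorem~\ref{thm:genConvergence} delivers the bias bound. The only step requiring genuine care is the spectral-norm estimate $\|A\|_2\le \frac{2M-1}{N}$; I expect it to follow cleanly from the symmetric max-row-sum inequality, but one must confirm it is tight enough to preserve the $O(\hat N/N)$ scaling promised in Remark~\ref{rem:tradeoffs}, while the remaining computations are routine bookkeeping over the banded Toeplitz structure.
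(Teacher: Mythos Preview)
Your proposal is correct and follows essentially the same approach as the paper: apply Theorem~\ref{thm:genConvergence} after computing $\|A\|_2$, $\|A\|_F^2$, $\|B[k]\|_2$, $\|B[k]\|_F^2$, and $b[k]$ for the Toeplitz matrix~(\ref{eq:ABT}). The only methodological difference is in the bound on $\|A\|_2$: the paper expands the quadratic form $u^\top A u$ and uses the AM--GM inequality $2|u[k-i]||u[k]|\le |u[k-i]|^2+|u[k]|^2$ to reach $\|NA\|_2\le 2M-1$, whereas you invoke the symmetric row-sum bound $\|A\|_2\le \|A\|_1$ directly; both yield exactly $\frac{2M-1}{N}$, and your route is slightly more economical.
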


In the notation of Theorem~\ref{thm:genConvergence} and Corollary~\ref{cor:gen}, $g=\frac{2M-1}{N}$, $\hat N=M$, and $b[k]=(N-|k|)w[k]/N$. See the proof for more details. 

\begin{remark}
  A set of non-asymptotic worst-case spectral error bounds were obtained in Theorems 4.1 and 4.2 of \cite{zhang2021convergence}. These correspond to the special case of the Blackman-Tukey estimate when $w$ is defined from a kernel. These results appear a bit different from Theorem~\ref{thm:BTconvergence} since \cite{zhang2021convergence} uses different assumptions and bounds the error using a different norm.

  Another related non-asymptotic worst-case bound is achieved in Theorem 6 of \cite{veedu2021topology}. The estimator in this paper is a truncated periodogram which can be shown to be a specialized type of Blackman-Tukey estimator.  
\end{remark}

 \subsection{Bartlett Estimators}
For the Bartlett estimator, assume that $N=LM$, where $L$ and $M$ are positive integers.
The Bartlett estimator is given by:
\begin{align*}
  \hat \by_i(s) &=\sum_{k=0}^{M-1}e^{-j2\pi sk}\by[iM+k] & \textrm{for } i=0,\ldots,L-1 \\
  \hat \bPhi(s)&=\frac{1}{N}\sum_{i=0}^{L-1}\hat\by_i(s)\hat\by_i(s)^\star
\end{align*}

The Bartlett estimator can be represented in the form of (\ref{eq:quadraticPeriodogram}) where $A$ is the block diagonal matrix:
\begin{equation}
  \label{eq:ABartlett}
A = \frac{1}{N}\begin{bmatrix}
  1_{M\times M} \\
  & \ddots & \\
  && 1_{M\times M}
  \end{bmatrix}
\end{equation}
where there are $L$ blocks of size $M\times M$.

  \begin{theorem}
  \label{thm:BartlettConvergence}
  Define $\alpha$, $\beta$, and $\hat M$, as in  (\ref{eq:collectedConstants}).
  \begin{enumerate}
  \item \label{it:BartlettConcentration}
    If $\frac{N}{M}\ge \alpha(\epsilon)\beta(\delta)$, then for all $s\in \left[-\frac{1}{2},\frac{1}{2} \right] $ we have
    $$ \bbP\left(\left\|\hat\bPhi(s)-\bbE\left[\hat\bPhi(s)\right] \right\|_2 >\epsilon \right) \le \delta.$$
  \item \label{it:BartlettWorst}
    If $\frac{N}{M}\ge \alpha(\epsilon/2)\left(
    \log(5M^2)+\beta(\delta/2)\right)$ then 
    $$
     \bbP\left(\sup_{s\in\left[-\frac{1}{2},\frac{1}{2}\right]}\left\|\hat\bPhi(s)-\bbE\left[\hat\bPhi(s)\right] \right\|_2 >\epsilon \right) \le \delta.
     $$
   \item \label{it:BartlettBias}
     If  $M\ge \frac{2 \hat M(\epsilon) \|R\|_1}{\epsilon}$, then
     $$
     \sup_{s\in \left[-\frac{1}{2},\frac{1}{2}\right]} \left\|\Phi(s)-\bbE\left[\hat\bPhi(s) \right]\right\|_2 \le \epsilon.
     $$
  \end{enumerate}
\end{theorem}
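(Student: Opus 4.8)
The plan is to apply the three matching parts of Theorem~\ref{thm:genConvergence} to the explicit block-diagonal matrix $A$ from (\ref{eq:ABartlett}). The whole argument reduces to computing the relevant norms $\|A\|_2$, $\|A\|_F$, the norms of the blocks $B[k]$, and the bias coefficients $b[k]$, and then verifying that the hypotheses of parts \ref{it:concentration}), \ref{it:worst}), and \ref{it:bias}) specialize exactly to the stated conditions, using $N=LM$ throughout.

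First, for the concentration bound in part \ref{it:BartlettConcentration}), I would compute the two norms of $A$. Each of the $L=N/M$ diagonal blocks equals $\frac{1}{N}1_{M\times M}$, whose only nonzero eigenvalue is $M/N$, so the block-diagonal structure gives $\|A\|_2=M/N$. For the Frobenius norm, each block contributes $M^2/N^2$ and there are $L$ of them, so $\|A\|_F^2 = LM^2/N^2 = M/N$ after substituting $L=N/M$. Hence $\max\{\|A\|_2,\|A\|_F^2\}=M/N$, its reciprocal is $N/M$, and the hypothesis of part \ref{it:concentration}) of Theorem~\ref{thm:genConvergence} becomes precisely $N/M\ge \alpha(\epsilon)\beta(\delta)$.

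Second, for the worst-case bound in part \ref{it:BartlettWorst}), I would read off the structure of the vectors $d[k]$ from (\ref{eq:expandedMatrices}). Since each block spans only $M$ consecutive indices, the entry $A_{i+k,i}$ vanishes whenever $|k|\ge M$, so $d[k]=0$ and hence $B[k]=0$ for $|k|\ge M$, which identifies $\hat N=M$. For $0\le |k|<M$, every nonzero entry of $d[k]$ equals $1/N$ and there are $L(M-|k|)$ of them, one group per block; by (\ref{eq:expandedNorms}) this yields $\|B[k]\|_2=\|d[k]\|_\infty=1/N$ and $\|B[k]\|_F^2=\|d[k]\|_2^2=L(M-|k|)/N^2\le LM/N^2=1/N$. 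Because $1/N\le M/N$, all block norms are dominated by $\max\{\|A\|_2,\|A\|_F^2\}=M/N$, so I may take $g=M/N$. Substituting $g=M/N$ and $\hat N=M$ into the hypothesis $\tfrac{1}{g}\ge \alpha(\epsilon/2)(\log(5\hat N^2)+\beta(\delta/2))$ of part \ref{it:worst}) gives exactly the stated condition.

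Finally, for the bias bound in part \ref{it:BartlettBias}), I would evaluate $b[k]$ from (\ref{eq:biasGeneral}). For $0\le k<M$, summing the $L(M-k)$ nonzero shifted-diagonal entries, each equal to $1/N$, gives $b[k]=L(M-k)/N = 1-k/M$, while $b[k]=0$ for $k\ge M$; by symmetry $b[k]=\max\{0,1-|k|/M\}\in[0,1]$ as required. The hypothesis $M\ge 2\hat M(\epsilon)\|R\|_1/\epsilon$ is equivalent to $\hat M(\epsilon)/M\le \epsilon/(2\|R\|_1)$, which forces $|k|/M\le \epsilon/(2\|R\|_1)$ for every $|k|<\hat M(\epsilon)$; moreover $\hat M(\epsilon)\ge 1$ implies $\|R\|_1>\epsilon/2$ (by minimality of $\hat M(\epsilon)$ at $\tilde M=0$), so $\hat M(\epsilon)<M$ and the formula $b[k]=1-|k|/M$ indeed applies on this range (the case $\hat M(\epsilon)=0$ being vacuous). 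Thus $b[k]\ge 1-\epsilon/(2\|R\|_1)$ for $|k|<\hat M(\epsilon)$, and part \ref{it:bias}) of Theorem~\ref{thm:genConvergence} delivers the bias bound. The computations are routine once the block structure is exploited; the only step demanding care is the bookkeeping for the $B[k]$ norms — correctly counting the $L(M-|k|)$ nonzero entries of $d[k]$ and confirming that every $\|B[k]\|_2$ and $\|B[k]\|_F^2$ is dominated by $M/N$, so that the single value $g=M/N$ simultaneously meets all requirements of part \ref{it:worst}). This is the main, and rather modest, obstacle.
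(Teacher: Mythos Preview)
Your proposal is correct and follows essentially the same approach as the paper: compute $\|A\|_2=\|A\|_F^2=M/N$, identify $\hat N=M$ with $\|B[k]\|_2=1/N$ and $\|B[k]\|_F^2=L(M-|k|)/N^2=(N-L|k|)/N^2\le 1/N$ so that $g=M/N$, and derive $b[k]=1-|k|/M$ for $|k|<M$ to verify the bias hypothesis. The only difference is that you spell out more of the bookkeeping (e.g.\ the $\hat M(\epsilon)<M$ check), which the paper leaves implicit.
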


In the notation of Theorem~\ref{thm:genConvergence} and Corollary~\ref{cor:gen}, $g=\frac{M}{N}$, $\hat N=M$, and $b[k]=1-|k|/M$. See the proof for more details.

In the special case that $\|R[k]\|_2\le \gamma \rho^{|k|}$ for all $k$, the bias has a more explicit bound given by:
\begin{multline*}
\sup_{s\in \left[-\frac{1}{2},\frac{1}{2}\right]} \left\|\Phi(s)-\bbE\left[\hat\bPhi(s) \right]\right\|_2\le \\ \frac{2\gamma \rho}{(1-\rho)^2 M}+
2\gamma \left(\frac{\rho^2}{(1-\rho)^2}+\frac{1}{1-\rho}\right)\rho^M = O(M^{-1}).
\end{multline*}

The bias bound can be combined with the high-probability bound from Corollary~\ref{cor:gen}, part~\ref{it:worstCor} to show that
$$
\|\hat\bPhi-\Phi\|_{\infty}=\tilde O(\sqrt{M/N}+M^{-1})
$$
with high probability, where $\tilde O$ suppresses logarithmic factors. Optimizing over $M$ leads to $M=O(N^{1/3})$, leading to an overall error bound of $O(N^{-1/3})$.

\subsection{Welch Estimators}
For the Welch estimator, assume that $N = (S-1)K+M$ for positive integers $S$, $K$, and $M$. Let $v\in\bbR^M$ be a window function. The Welch estimator is defined by:
\begin{subequations}
\begin{align}
  \label{eq:WelchFT}
  \hat \by_i(s) &=\sum_{k=0}^{M-1}e^{-j2\pi sk} \frac{v[k]}{\|v\|_2}\by[iK+k] & \textrm{for } i=0,\ldots,S-1 \\
  \hat\bPhi(s) &= \frac{1}{S}\sum_{i=0}^{S-1}\hat \by_i(s) \hat\by_i(s)^\star  
\end{align}
\end{subequations}

In this case $\bPhi(s)$ can be expressed in the form of (\ref{eq:quadraticPeriodogram}) with $A$ a sum of block-diagonal matrices:
\begin{equation}
  \label{eq:AWelch}
  A = \frac{1}{S\|v\|_2^2}\sum_{i=0}^{S-1}\begin{bmatrix}
    0_{iK\times iK}  \\
    & vv^\top \\
    && 0_{(N-iK-M)\times (N-iK-M)}
  \end{bmatrix}.
\end{equation}

  \begin{theorem}
  \label{thm:WelchConvergence}
  Define $\alpha$, $\beta$, and $\hat M$ as in (\ref{eq:collectedConstants}).
  \begin{enumerate}
  \item \label{it:WelchConcentration}
    If $\frac{S}{1+2\frac{M}{K}}\ge \alpha(\epsilon)\beta(\delta)$, then for all $s\in \left[-\frac{1}{2},\frac{1}{2} \right] $ we have
    $$ \bbP\left(\left\|\hat\bPhi(s)-\bbE\left[\hat\bPhi(s)\right] \right\|_2 >\epsilon \right) \le \delta.$$
  \item \label{it:WelchWorst}
    If $\frac{S}{1+2\frac{M}{K}}\ge \alpha(\epsilon/2)\left(\log(5M^2)+\beta(\delta/2)\right)$ then 
    $$
     \bbP\left(\sup_{s\in\left[-\frac{1}{2},\frac{1}{2}\right]}\left\|\hat\bPhi(s)-\bbE\left[\hat\bPhi(s)\right] \right\|_2 >\epsilon \right) \le \delta.
     $$
   \item \label{it:WelchBias}
     If  $M\ge\hat M(\epsilon)$ and for all $|k|<\hat M(\epsilon)$ we have $ \sum_{i=|k|}^{M-1}\frac{v[i-|k|]v[i]}{\|v\|_2^2}\ge 1-\frac{\epsilon}{2\|R\|_1}$, then
     $$
     \sup_{s\in \left[-\frac{1}{2},\frac{1}{2}\right]} \left\|\Phi(s)-\bbE\left[\hat\bPhi(s) \right]\right\|_2 \le \epsilon.
     $$
  \end{enumerate}
\end{theorem}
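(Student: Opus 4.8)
The plan is to reduce all three parts to the matching parts of Theorem~\ref{thm:genConvergence}, by computing the four quantities that framework needs for the Welch matrix $A$ of (\ref{eq:AWelch}): the size $\max\{\|A\|_2,\|A\|_F^2\}$, the diagonal norms $\|B[k]\|_2,\|B[k]\|_F$, the bandwidth parameter $\hat N$, and the bias coefficients $b[k]$.

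The first and main step is to bound $\|A\|_2$. Since $A$ is a nonnegative combination of the rank-one blocks $vv^\top$, it is positive semidefinite, so $\|A\|_2=\lambda_{\max}(A)=\max_{\|x\|_2=1}x^\top A x$. Writing $x_{(i)}$ for the length-$M$ subvector of $x$ on indices $iK,\dots,iK+M-1$ and applying Cauchy--Schwarz as $(v^\top x_{(i)})^2\le\|v\|_2^2\|x_{(i)}\|_2^2$ gives $x^\top A x\le \frac{1}{S}\sum_{i=0}^{S-1}\|x_{(i)}\|_2^2=\frac{1}{S}\sum_\ell n_\ell x_\ell^2$, where $n_\ell$ is the number of windows covering index $\ell$. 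Because the windows are spaced $K$ apart and have length $M$, $n_\ell\le 1+\frac{M-1}{K}\le 1+\frac{2M}{K}$, so $\|A\|_2\le\frac{1+2M/K}{S}$. Next I observe $\Tr(A)=\frac{1}{S\|v\|_2^2}\sum_i\Tr(vv^\top)=1$, whence positive semidefiniteness gives $\|A\|_F^2=\sum_i\lambda_i(A)^2\le\lambda_{\max}(A)\Tr(A)=\|A\|_2$. Thus $\max\{\|A\|_2,\|A\|_F^2\}=\|A\|_2\le(1+2M/K)/S$, and part~\ref{it:WelchConcentration}) follows immediately from part~\ref{it:concentration}) of Theorem~\ref{thm:genConvergence}, since the hypothesis yields $1/\max\{\|A\|_2,\|A\|_F^2\}\ge S/(1+2M/K)\ge\alpha(\epsilon)\beta(\delta)$.

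For the worst-case bound I set $g=(1+2M/K)/S$, which already dominates $\max\{\|A\|_2,\|A\|_F^2\}$. The remaining quantities come for free from comparisons to $A$: using (\ref{eq:expandedNorms}), $\|B[k]\|_2=\|d[k]\|_\infty=\max_p|A_{p,p-k}|\le\|A\|_2\le g$ because every entry of a matrix is bounded by its spectral norm, while $\|B[k]\|_F^2=\|d[k]\|_2^2=\sum_p A_{p,p-k}^2\le\sum_{p,q}A_{p,q}^2=\|A\|_F^2\le\|A\|_2\le g$. Since each block $vv^\top$ has bandwidth $M-1$ and the blocks sit on the main diagonal, $A_{p,q}=0$ for $|p-q|\ge M$, so $B[k]=0$ for $|k|\ge M$ and I take $\hat N=M\le N$. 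Part~\ref{it:WelchWorst}) is then exactly part~\ref{it:worst}) of Theorem~\ref{thm:genConvergence} with these choices.

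For the bias, I start from (\ref{eq:biasGeneral}), $b[k]=\sum_{i=|k|}^{N-1}A_{i,i-|k|}$, substitute (\ref{eq:AWelch}), and exchange the order of summation. Because the blocks tile $[0,N-1]$ exactly ($N=(S-1)K+M$), there is no boundary truncation, so each of the $S$ windows contributes the identical window autocorrelation and the factor $S$ cancels, giving $b[k]=\frac{1}{\|v\|_2^2}\sum_{i=|k|}^{M-1}v[i-|k|]v[i]$, precisely the quantity in the hypothesis. Cauchy--Schwarz gives $b[k]\le 1$ with $b[0]=1$, and $b[k]=0$ for $|k|\ge M$; verifying $b[k]\in[0,1]$ (which holds for the standard nonnegative windows, whose autocorrelation is nonnegative), together with $M\ge\hat M(\epsilon)$ and the stated lower bound on $b[k]$ for $|k|<\hat M(\epsilon)$, matches the hypotheses of part~\ref{it:bias}) of Theorem~\ref{thm:genConvergence} and yields part~\ref{it:WelchBias}). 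The main obstacle is the spectral-norm bound in the first step: the overlap count $n_\ell\le 1+2M/K$ and the identity $\Tr(A)=1$ (which collapses $\max\{\|A\|_2,\|A\|_F^2\}$ to $\|A\|_2$) are what produce the clean factor $S/(1+2M/K)$; everything else reduces to entrywise and diagonal comparisons against $A$ and a routine resummation.
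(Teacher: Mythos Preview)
Your proof is correct and reaches the same conclusion as the paper, but the route is genuinely different in two places and worth noting.

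For $\|A\|_2$, the paper regroups the $S$ shifted rank-one blocks into $\lceil M/K\rceil$ batches, observing that within each batch the blocks are non-overlapping so each batch has operator norm at most $1$; the triangle inequality then gives $\|SA\|_2\le\lceil M/K\rceil\le 1+2M/K$. Your Rayleigh-quotient argument with Cauchy--Schwarz and the overlap count $n_\ell\le 1+(M-1)/K$ is a different and arguably more direct way to obtain the same bound. For $\|A\|_F^2$, the paper expands $\|SA\|_F^2$ as a double sum of inner products of shifted copies of $v$ and counts how many pairs can be nonzero. Your observation that $\Tr(A)=1$ together with $\|A\|_F^2\le\|A\|_2\Tr(A)$ for positive semidefinite $A$ is considerably cleaner and collapses this step into one line.

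For the $B[k]$ norms, the paper does more work and gets the sharper bounds $\|B[k]\|_2\le 1/S$ and $\|B[k]\|_F^2\le 1/S$ by exploiting $SA_{pp}\le 1$ and positive semidefiniteness entrywise. Your entrywise comparisons $\|B[k]\|_2\le\|A\|_2$ and $\|B[k]\|_F^2\le\|A\|_F^2$ give only $(1+2M/K)/S$, but since $g$ is already set to that value, nothing is lost in the final statement. Your bias computation and the handling of $b[k]\in[0,1]$ via nonnegative windows match the paper's treatment.
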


In the notation of Theorem~\ref{thm:genConvergence} and Corollary~\ref{cor:gen}, $g=\frac{1+2\frac{M}{K}}{S}$, $\hat N=M$, and $b[k]=(N-|k|)w[k]/N$. See the proof for more details. In typical applications,  the ratio $r=\frac{M}{K}$ is fixed with $r>1$. A commonly used value is $r=2$. In this case, $g\le \frac{M(1+2r)}{N}=O(\hat N/N)$, as in Remark~\ref{rem:tradeoffs}. 

\section{Numerical Studies}
\label{sec:numerical}
Here we show two applications of the bounds from this paper to simulated stochastic processes. In all cases, the Welch algorithm with Hann window was used. 

\begin{example}
  \label{ex:scalar}
  We consider a scalar signal of the form
  $$
  \by[k]=\frac{1}{1-\rho^2}\sum_{\ell=1}^{\infty}\rho^{\ell-1}\bzeta[k-\ell]
  $$
  where $\bzeta[k]$ are scalar-valued IID random variables with mean zero and variance $1$ and $\rho=0.3$. In this case, the corresponding autocovariance is exactly $R[k]=\rho^{|k|}$ for all $k$. We simulated the case that $\bzeta[k]$ are Gaussian and also when $\zeta[k]$ is uniform over $\left[-\sqrt{3},\sqrt{3} \right]$, which is $\sqrt{3}$-sub-Gaussian. As can be seen, the bound for the Gaussian process, is somewhat conservative, while the sub-Gaussian processes is quite conservative. The reason for the conservatism of the sub-Gaussian process is the large constant factor arising from the sub-Gaussian Hanson-Wright inequality.

  \begin{figure}
    \begin{minipage}[b]{\columnwidth}
      \centering
      \includegraphics[width=.8\textwidth]{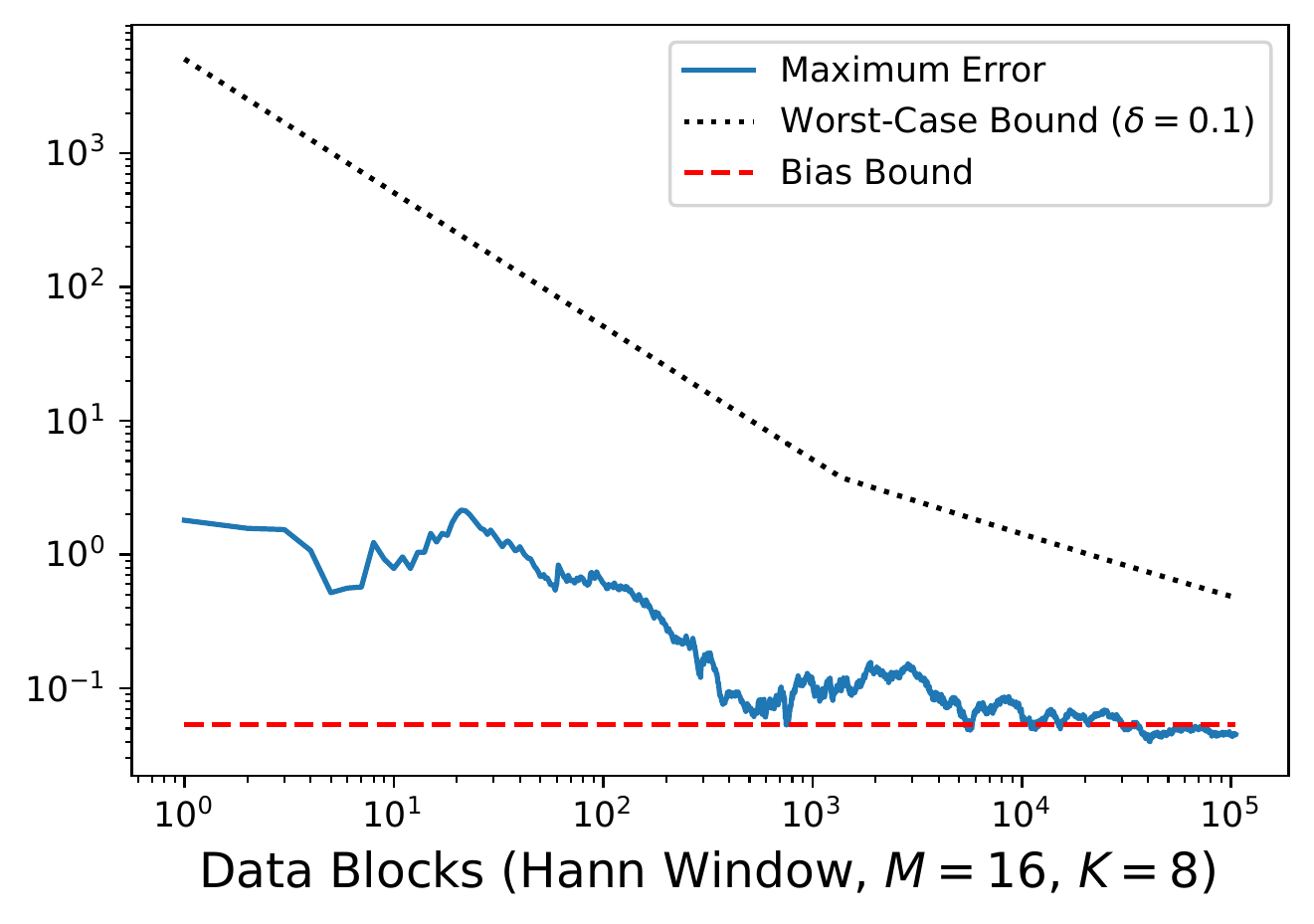}
      \subcaption{Scalar Gaussian Process}
    \end{minipage}
    \begin{minipage}[b]{\columnwidth}
      \centering
      \includegraphics[width=.8\textwidth]{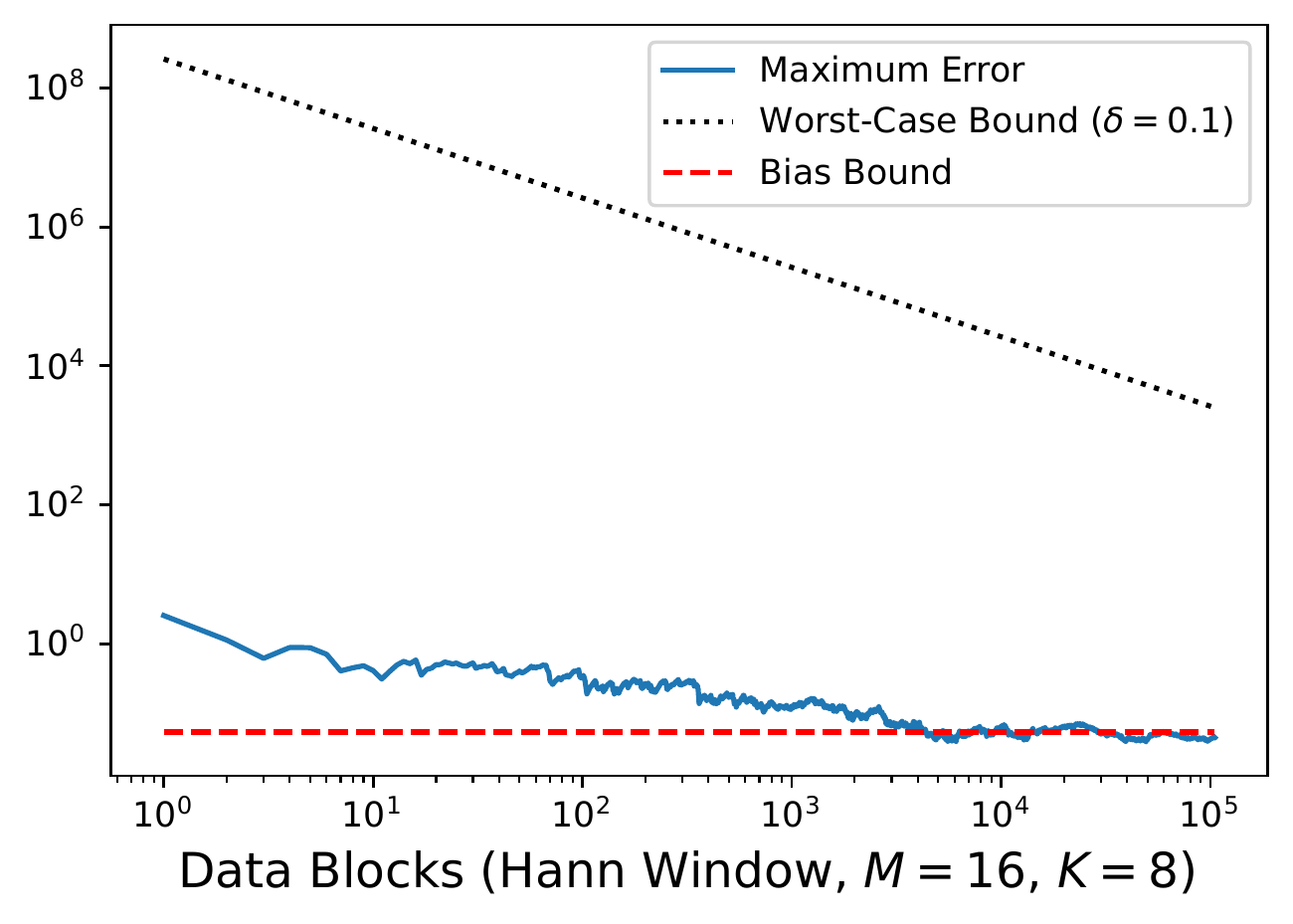}
      \subcaption{Scalar Sub-Gaussian Process}
    \end{minipage}
    \caption{\label{fig:scalar} Error of the Welch method for Example~\ref{ex:scalar}. The number of data blocks corresponds to $S$. The blue line shows the maximum error over a linearly spaced grid of $[0,.5]$ of size $101$, the black dotted line shows the total worst-case error from parts~\ref{it:worstCor}) and \ref{it:biasDecay}) Corollary~\ref{cor:gen}, and the red dashed line shows exact bias.}
  \end{figure}
\end{example}

\begin{example}
  The next example shows the results for a process of the form $y[k]=\sum_{\ell=-\infty}^\infty h[k-\ell]\zeta[\ell]$ where, $\zeta[\ell]\in\bbR^3$ are IID Gaussians with zero mean and identity covariance, 
  $$
  h[k]=\begin{cases}
    D & k=0 \\
    CA^{k-1}B & k\ge 1\\
    0 & k<0
  \end{cases}
  $$
  and
  \begin{align*}
    A &= \begin{bmatrix}
      0.3 & 0 \\
      1 & 0.3
    \end{bmatrix} & B&=\begin{bmatrix}
      1 & 0 & 0 \\
      0 & 1 & 0
    \end{bmatrix} \\
    C &=\begin{bmatrix}
      0 & 0 \\
      1 & 0 \\
      0 & 1
      \end{bmatrix} & D&=\begin{bmatrix}1 & 0 & 0 \\ 0 & 1 & 0 \\ 0 & 0 & 1\end{bmatrix}.
  \end{align*}
  In this case $\|R[k]\|_2\le \gamma \rho^{|k|}$ for any $\rho \in (0.3,1)$ and  sufficiently large $\gamma$. Specifically, if $P$ is a positive definite matrix with condition number $\kappa >0$ such that $A^\top P A\preceq \rho^2 P$, and $X=AXA^\top +BB^\top$ is the observability Gramian, then 
  \begin{multline*}
    \gamma = \max\left\{\|CXC^\top +DD^\top\|_2,\right.\\
    \left.\sqrt{\kappa}\|C\|_2\left(\frac{\|BD\|_2}{\rho}+\|XC^\top\|_2 \right)\right\}.
  \end{multline*}
  Then an upper  bound on the bias can be computed explicitly from Corollary~\ref{cor:gen}.

  As can be seen in Fig.~\ref{fig:chain}, the bounds are a bit conservative, as in the scalar case. 

  \begin{figure}
    \centering
    \includegraphics[width=.8\columnwidth]{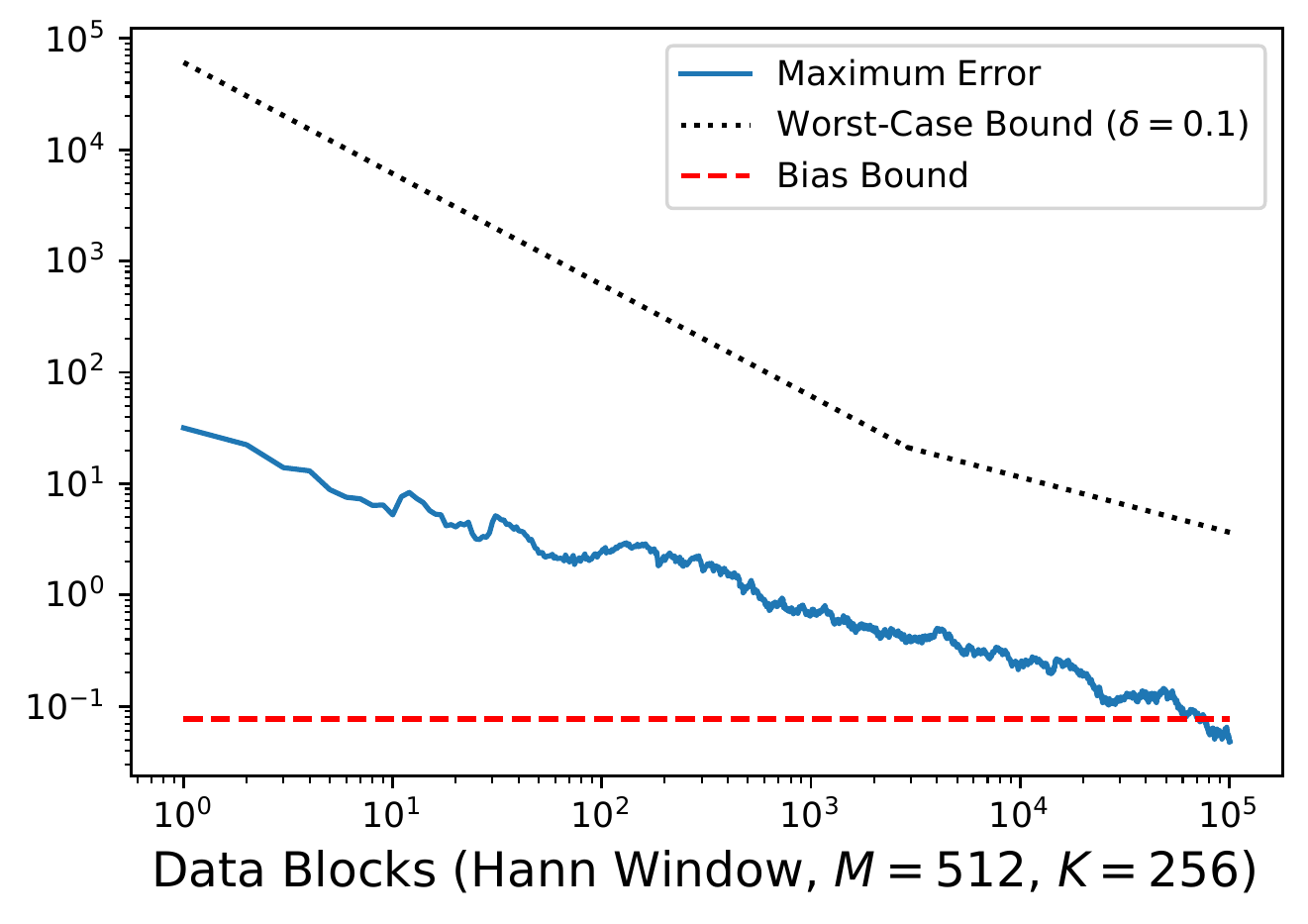}
    \caption{\label{fig:chain} The error of the $3$-dimensional signal. For details on the lines, see Fig~\ref{fig:scalar}. The only difference is now the red dashed line is an upper bound on the bias, rather than an exact bias. }
  \end{figure}
  
\end{example}

\section{Conclusion}
\label{sec:conclusion}

This paper gives a method for deriving non-asymptotic error bounds for a class of spectrum estimators. This method is used to derive error bounds for a variety of classical estimators. Many avenues for future work remain. Window-dependent bias-variance trade-offs can be formulated for the Welch and Blackman-Tukey estimators. Errors induced by preprocessing steps such as centering, normalization, and detrending could be quantified. More precise, frequency-dependent error bounds may be possible, in analogy with asymptotic results, and the Gaussian/sub-Gaussian assumptions could potentially be relaxed.  
The bounds from the paper could be utilized to bound errors in estimating $\Phi^{-1}(s)$, which is particularly useful for network identification \cite{materassi2012problem} and system identification \cite{ljung1999system}.  

\printbibliography

\appendices

\section{Concentration for Time-Series Data Matrices}
\label{app:genConcentration}

This section presents an intermediate result that is used to prove the probabilistic bounds in Theorem~\ref{thm:genConvergence}.

\begin{lemma}
  \label{lem:matrixConcentration}
  Let $J\in\bbC^{N\times N}$. Assume that either $J\in\bbR^{N\times N}$ or $J$ is Hermitian. Let $\bY = \begin{bmatrix}\by[0] & \cdots & \by[N-1]\end{bmatrix}\in\bbR^{n\times N}$ be a matrix of data satisfying either Assumption~\ref{a:gaussian} or Assumption~\ref{a:subgaussian}. 
For all $\epsilon >0$
\begin{multline*}
\bbP\left(\left\|\bY J \bY^\top - \bbE\left[\bY J \bY^\top \right]\right\|_2 > \epsilon \right) \le \\
10^{2n} c_{\ref{ConcentrationMult}}\exp\left(-c_{\ref{ConcentrationExp}} \min\left\{\frac{\epsilon^2}{c_{\ref{ConcentrationSubGauss}}^4\|J\|_F^2\|\Phi\|_{\infty}^2},\frac{\epsilon}{c_{\ref{ConcentrationSubGauss}}^2\|J\|_2\|\Phi\|_\infty}\right\}\right),
\end{multline*}
where 
$c_{\ref{ConcentrationMult}}$, $c_{\ref{ConcentrationExp}}$, and $c_{\ref{ConcentrationSubGauss}}$ are defined in (\ref{eq:constants}). 
\end{lemma}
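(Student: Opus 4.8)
The plan is to bound the spectral norm of the Hermitian matrix $\bY J \bY^\top - \bbE[\bY J \bY^\top]$ by a maximum over a finite net of scalar quadratic forms, and then apply a Hanson-Wright inequality with explicit constants to each form. First I would note that since $\bY$ is real and $J$ is real symmetric or Hermitian, $M := \bY J \bY^\top$ is Hermitian, so $\|M-\bbE M\|_2 = \sup_{u\in\bbC^n,\,\|u\|_2=1}|u^\star(M-\bbE M)u|$. Taking a $\tfrac14$-net $\mathcal{N}$ of the complex unit sphere, which is the real unit sphere in $\bbR^{2n}$ and hence admits a net of cardinality at most $9^{2n}\le 10^{2n}$, the standard net bound for Hermitian matrices gives $\|M-\bbE M\|_2 \le 2\max_{u\in\mathcal{N}}|u^\star(M-\bbE M)u|$. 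A union bound then reduces the problem to controlling a single scalar form at error level $\epsilon/2$, at the cost of the factor $10^{2n}$.

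Next I would rewrite each scalar form as a real quadratic form in the underlying independent randomness. Setting $\mathbf{w}=\bY^\top u\in\bbC^N$ gives $u^\star M u = \mathbf{w}^\star J \mathbf{w}$, and with $\mathbf{x}=\mathrm{vec}(\bY)$ and $U=I_N\otimes u^\top$ this becomes $\mathbf{x}^\top P\mathbf{x}$, where $P=\mathrm{Re}(U^\star J U)=\mathrm{Re}\bigl(J\otimes(\overline u u^\top)\bigr)$ is real symmetric (the imaginary part is skew-symmetric and annihilated by the real vector $\mathbf{x}$). Since $\|\overline u u^\top\|_2=\|\overline u u^\top\|_F=\|u\|_2^2=1$, this yields the clean matrix bounds $\|P\|_2\le\|J\|_2$ and $\|P\|_F\le\|J\|_F$. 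In the Gaussian case (A1) I whiten, $\mathbf{x}=\Sigma^{1/2}\mathbf{g}$ with $\mathbf{g}$ standard Gaussian and $\Sigma=\mathrm{Cov}(\mathbf{x})$, so the form becomes $\mathbf{g}^\top\tilde Q\mathbf{g}$ with $\tilde Q=\Sigma^{1/2}P\Sigma^{1/2}$; in the sub-Gaussian case (A2) I write $\mathbf{x}=\mathcal{H}\,\mathrm{vec}(\bzeta)$ through the convolution operator, so the form becomes $\mathbf{g}^\top\tilde Q\mathbf{g}$ with $\tilde Q=\mathcal{H}^\top P\mathcal{H}$ in the independent $\sigma$-sub-Gaussian innovations.

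The key quantitative step is to bound the norms of $\tilde Q$ by $\|\Phi\|_\infty$ times those of $P$. I would establish $\|\Sigma\|_2\le\|\Phi\|_\infty$ by the computation $a^\top\Sigma a=\int_{-1/2}^{1/2}A(s)^\top\Phi(s)\overline{A(s)}\,ds\le\|\Phi\|_\infty\int_{-1/2}^{1/2}|A(s)|^2\,ds=\|\Phi\|_\infty\|a\|_2^2$, where $A(s)=\sum_k a_k e^{j2\pi sk}$ and the last equality is Parseval; the analogue for the convolution operator is $\|\mathcal{H}\|_{\mathrm{op}}^2\le\|\Phi\|_\infty$ via $\Phi(s)=H(s)H(s)^\star$. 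Submultiplicativity then gives $\|\tilde Q\|_2\le\|\Phi\|_\infty\|J\|_2$ and $\|\tilde Q\|_F\le\|\Phi\|_\infty\|J\|_F$. Applying the explicit-constant Hanson-Wright inequality at level $\epsilon/2$ (the Gaussian version under A1 with $c_{\ref{ConcentrationSubGauss}}=1$, the $\sigma$-sub-Gaussian version under A2 with $c_{\ref{ConcentrationSubGauss}}=\sigma$), and folding the net factor $2$ and the $\tfrac14$ from $(\epsilon/2)^2$ into the constants $c_{\ref{ConcentrationMult}}$ and $c_{\ref{ConcentrationExp}}$, yields the stated bound.

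I expect the main obstacle to be the sub-Gaussian case, where $\mathcal{H}$ and hence $\tilde Q$ are genuinely infinite-dimensional. Making the quadratic-form Hanson-Wright rigorous there requires a truncation-and-limit argument exploiting summability of $h$ (so that $\tilde Q$ is finite-rank with finite Frobenius norm and the finite-dimensional inequality passes to the limit), or a directly infinite-dimensional statement. A secondary difficulty is tracking all constants through the net bound and the whitening/convolution step so that the final $c_{\ref{ConcentrationMult}}$, $c_{\ref{ConcentrationExp}}$, $c_{\ref{ConcentrationSubGauss}}$ match (\ref{eq:constants}) exactly; in particular the worse sub-Gaussian values $c_{\ref{ConcentrationMult}}=4$ and $c_{\ref{ConcentrationExp}}=2^{-19}$ should emerge entirely from the explicit sub-Gaussian Hanson-Wright inequality rather than from the reduction itself.
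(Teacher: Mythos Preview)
Your overall strategy---scalarize via a net, rewrite as a quadratic form in the underlying independent variables, bound the relevant matrix norms by $\|\Phi\|_\infty$ through Parseval, and invoke an explicit-constant Hanson--Wright---is exactly the paper's approach. Your anticipated obstacle in the sub-Gaussian case is also handled the way you guess: the paper truncates the convolution to $|\ell|\le T$, so that $\underline{\by}_T = G_T\underline{\bzeta}_T$ with a finite matrix $G_T$ satisfying $\|G_T\|_2^2\le\|\Phi\|_\infty$, applies the finite-dimensional Hanson--Wright, and then passes to the limit by dominated convergence.

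There is, however, one genuine gap. You write that ``$J$ is real symmetric or Hermitian,'' but the hypothesis is only that $J$ is real \emph{or} Hermitian; nothing forces a real $J$ to be symmetric. (Indeed, the matrices $B[k]$ from (\ref{eq:expandedMatrices}), which are precisely the $J$'s used in the worst-case bound, are real and strictly triangular for $k\ne 0$.) In that case $\bY J\bY^\top$ is a real but non-symmetric matrix, so $\|M-\bbE M\|_2$ is \emph{not} captured by $\sup_u |u^\star(M-\bbE M)u|$, and your single-vector net argument breaks down. The paper handles the two cases separately: for Hermitian $J$ it uses your quadratic-form covering over a $\tfrac{2}{9}$-net of the complex unit ball (identified with the real ball in $\bbR^{2n}$, hence $\le 10^{2n}$ points, plus a factor $2$ for the absolute value), while for real $J$ it covers over \emph{pairs} $(\hat u,\hat v)$ of real unit vectors, using the bilinear form $\hat u^\top(M-\bbE M)\hat v$ and the scalarized bound with $A=G^\top(J^\top\otimes(\hat v\hat u^\top))G$. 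The pair-covering also has at most $10^n\cdot 10^n=10^{2n}$ elements, so the two cases give the same prefactor and the stated constants emerge uniformly. Your real-part trick $P=\mathrm{Re}(J\otimes(\bar u u^\top))$ is a fine alternative to the paper's ``allow Hermitian $A$ in Hanson--Wright'' device, but you still need the separate bilinear covering for the real non-symmetric case.
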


To prove Lemma~\ref{lem:matrixConcentration}, we first derive concentration results for the scalar random variables $u^\star \bY J \bY^\top v$, with $\|u\|_2 =\|v\|_2=1$. These bounds are obtained by decoupling the dependent data and then using the
Hanson-Wright inequality. Some specialized results for the case of Gaussian data are utilized to achieve tighter constant factors.  

\subsection{Preliminary Results for the Scalarized Problem}

Let $u,v\in\bbC^n$ be such that $\|u\|_2=1$, $\|v\|_2=1$, and let $\underline{\by}=\begin{bmatrix}\by[0]^\top & \cdots & \by[N-1]^\top \end{bmatrix}^\top$ be the vertical stack of the data.

\begin{lemma}
  The scalarized random variable, $u^\star \bY J \bY^\top v$ satisfies
  \begin{align*}
    u^\star \bY J \bY^\top v = \underline{\by}^\top \left(J^\top \otimes (vu^\star) \right) \underline{\by}
  \end{align*}
  where $\|J^\top \otimes (vu^\star)\|_2=\|J\|_2$ and $\|J^\top \otimes (vu^\star)\|_F = \|J\|_F$. 
\end{lemma}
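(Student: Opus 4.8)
The plan is to establish the stated identity by a direct coordinate/block expansion of both sides, and then to read off the two norm equalities from the standard multiplicativity of the Kronecker product. First I would expand the left-hand side. Writing $\bY = \begin{bmatrix}\by[0] & \cdots & \by[N-1]\end{bmatrix}$ and using that $\bY$ is real, the vector $\bY^\top v$ has $k$-th entry $\by[k]^\top v$ and $u^\star \bY$ has $k$-th entry $u^\star \by[k]$, so inserting $J$ between them unfolds $u^\star \bY J \bY^\top v$ into the double sum $\sum_{k,\ell} J_{k,\ell}\,(u^\star \by[k])(\by[\ell]^\top v)$, where $k,\ell$ range over $0,\ldots,N-1$.

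Next I would expand the right-hand side using the block structure of $\underline{\by}$. Partitioning $\underline{\by}$ into its $N$ blocks $\by[0],\ldots,\by[N-1]$, the $(k,\ell)$ block (each of size $n\times n$) of $J^\top \otimes (vu^\star)$ is $(J^\top)_{k,\ell}\,vu^\star = J_{\ell,k}\,vu^\star$, so the quadratic form becomes $\underline{\by}^\top\!\left(J^\top\otimes(vu^\star)\right)\underline{\by} = \sum_{k,\ell} J_{\ell,k}\,(\by[k]^\top v)(u^\star \by[\ell])$. Relabeling $k\leftrightarrow \ell$ and using that the two scalar factors commute recovers exactly the expression obtained for the left-hand side, which proves the identity. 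The one place demanding care — and the only real (though modest) obstacle — is the bookkeeping of conjugates and block ordering: since the Kronecker factor is $vu^\star$ rather than $uv^\star$, and it is $J^\top$ rather than $J$ that appears, one must track carefully which index of $J$ is paired with $u^\star$ and which with $v$ so that the relabeling lines up.

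Finally, for the two norms I would invoke the multiplicativity of the spectral and Frobenius norms under Kronecker products together with their invariance under transposition, giving $\|J^\top\otimes(vu^\star)\|_2 = \|J^\top\|_2\,\|vu^\star\|_2 = \|J\|_2\,\|vu^\star\|_2$ and the analogous factorization for $\|\cdot\|_F$. It then remains to evaluate the norms of the rank-one matrix $vu^\star$: its unique nonzero singular value gives $\|vu^\star\|_2 = \|v\|_2\|u\|_2 = 1$, while $\|vu^\star\|_F^2 = \sum_{i,j}|v_i\bar u_j|^2 = \|v\|_2^2\,\|u\|_2^2 = 1$, both using $\|u\|_2=\|v\|_2=1$. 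Substituting these yields $\|J^\top\otimes(vu^\star)\|_2 = \|J\|_2$ and $\|J^\top\otimes(vu^\star)\|_F = \|J\|_F$, completing the argument.
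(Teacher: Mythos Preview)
Your proposal is correct and follows essentially the same approach as the paper: a direct block/coordinate expansion to establish the identity, followed by the multiplicativity of the Kronecker product for the norm equalities. The only cosmetic difference is that the paper verifies the Frobenius-norm identity via an explicit trace computation rather than citing $\|A\otimes B\|_F=\|A\|_F\|B\|_F$, but this amounts to the same argument.
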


\begin{IEEEproof}
  The alternate formula for the variable follows from direct calculation:
  \begin{align*}
    u^\star \bY J \bY^\star v &= \sum_{p,q=0}^{N-1} (u^\star \by[p]) J_{p,q} (\by[q]^\top v) \\
                              &= \sum_{p,q=0}^{N-1}\by[q]^\top \left( J_{p,q} vu^\star\right) \by[p] \\
                              &=\underline{\by}^\top \left(J^\top \otimes (vu^\star) \right) \underline{\by}
  \end{align*}
  The norm properties follow from direct calculation as well:
  \begin{equation*}
    \|J^\top \otimes (vu^\star) \|_2 =\|J^\top \|_2 \|vu^\star \|_2 = \|J\|_2
  \end{equation*}
  and
  \begin{align*}
    \|J^\top \otimes (vu^\star)\|_F^2
    &=\Tr\left(
      \overline{J} J^\top \otimes uv^\star v u^\star
      \right) \\
    &= \Tr\left(
      (\overline{J}\otimes u)(J^\top \otimes u^\star)
      \right) \\
    &= \Tr\left(
      (J^\top \overline{J})\otimes (1)
      \right) \\
    &=\|J\|_F^2. 
  \end{align*}
\end{IEEEproof}

Let
$$
\underline{R} = \bbE\left[
  \underline{\by} \underline{\by}^\top
\right] = \begin{bmatrix}
  R[0] & R[-1] &  \cdots & R[-N+1] \\
  R[1] & R[0] & \cdots & R[-N+2] \\
  \vdots & \vdots & &\vdots \\
  R[N-1] & R[N-2] & \cdots & R[0]. 
\end{bmatrix}
$$

The matrix $\underline{R}$ will be utilized to express the correlated data vectors in terms of contributions of independent random variables. The following bound will be utilized to analyze the concentration of these decoupled vectors. 

\begin{lemma}
  \label{lem:covFromSpec}
  The matrix $\underline{R}$ satisfies $\|\underline{R}\|_2\le \|\Phi\|_{\infty}$. 
\end{lemma}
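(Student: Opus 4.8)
The plan is to recognize $\underline{R}$ as a finite section of the bi-infinite block-Toeplitz operator whose symbol is $\Phi$, and to bound its spectral norm by the supremum of that symbol via Parseval's identity. First I would record two elementary facts. Since $\underline{R}=\bbE[\underline{\by}\,\underline{\by}^\top]$ is a covariance matrix it is real, symmetric, and positive semidefinite, so $\|\underline{R}\|_2=\lambda_{\max}(\underline{R})=\sup_{\|x\|_2=1}x^\top\underline{R}\,x$, where the supremum may be taken over real unit vectors $x\in\bbR^{nN}$; it therefore suffices to bound this Rayleigh quotient. Second, because $\|R\|_1<\infty$ the series defining $\Phi$ converges absolutely and uniformly, so $\Phi$ is continuous and the Fourier inversion formula $R[k]=\int_{-1/2}^{1/2}e^{j2\pi sk}\Phi(s)\,ds$ holds for every $k$.

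Next I would expand the quadratic form. Writing $x=\begin{bmatrix}x_0^\top & \cdots & x_{N-1}^\top\end{bmatrix}^\top$ with $x_i\in\bbR^n$ and substituting the inversion formula into each block $R[i-j]$, the aim is to establish
\begin{equation*}
x^\top\underline{R}\,x=\sum_{i,j=0}^{N-1}x_i^\top R[i-j]x_j=\int_{-1/2}^{1/2}\hat x(s)^\star\Phi(s)\hat x(s)\,ds,
\end{equation*}
where $\hat x(s)=\sum_{j=0}^{N-1}e^{-j2\pi sj}x_j\in\bbC^n$. This follows by interchanging the finite sum with the integral and collecting the exponentials via $e^{j2\pi s(i-j)}=e^{j2\pi si}\,e^{-j2\pi sj}$, so that $\sum_{i,j}e^{j2\pi s(i-j)}x_i^\top\Phi(s)x_j=\hat x(s)^\star\Phi(s)\hat x(s)$ (using that the $x_i$ are real). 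The interchange is legitimate because the sum is finite and $\|\Phi\|_\infty<\infty$.

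I would then bound the integrand pointwise. For each $s$ the matrix $\Phi(s)$ is Hermitian and positive semidefinite, being a power spectral density, so $\Phi(s)\preceq\|\Phi(s)\|_2\,I\preceq\|\Phi\|_\infty\,I$, which gives $\hat x(s)^\star\Phi(s)\hat x(s)\le\|\Phi\|_\infty\|\hat x(s)\|_2^2$. Applying Parseval's identity to the trigonometric polynomial $\hat x$, namely $\int_{-1/2}^{1/2}\|\hat x(s)\|_2^2\,ds=\sum_{j=0}^{N-1}\|x_j\|_2^2=\|x\|_2^2$, and combining the two estimates yields $x^\top\underline{R}\,x\le\|\Phi\|_\infty\|x\|_2^2$. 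Taking the supremum over unit vectors then gives $\|\underline{R}\|_2\le\|\Phi\|_\infty$, as claimed.

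The computation is essentially routine; the only points requiring care are the justification of the sum/integral interchange (which is where the standing assumptions $\|R\|_1<\infty$ and $\|\Phi\|_\infty<\infty$ enter) and the correct vector-valued form of Parseval's identity. The single conceptual step, and the one I would highlight, is recognizing the block-Toeplitz structure of $\underline{R}$, so that the map $x\mapsto\hat x(\cdot)$ simultaneously diagonalizes the Toeplitz structure and converts the quadratic form into an integral against the symbol $\Phi$; once this is in place, the positive semidefiniteness of each $\Phi(s)$ does the rest.
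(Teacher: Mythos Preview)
Your proof is correct and follows essentially the same route as the paper: both represent the Rayleigh quotient $x^\top\underline{R}\,x$ as the integral $\int_{-1/2}^{1/2}\hat x(s)^\star\Phi(s)\hat x(s)\,ds$ via the block-Toeplitz structure, then bound $\Phi(s)\preceq\|\Phi\|_\infty I$ and apply Parseval. The paper phrases the first step as ``convolution rule and Plancherel'' while you reach it by substituting the Fourier inversion formula for $R[k]$, but the computation and the resulting bound are identical.
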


\begin{IEEEproof}
Since $\underline{R}$ is real-valued, symmetric, and positive semidefinite 
$$
\| \underline{R} \|_2 = \sup_{\|z\|_2= 1}z^\top \underline{R}z 
$$
where the supremum ranges over complex-valued unit vectors. 

Let $z = \begin{bmatrix}z[0]^\star & \cdots & z[N-1]^\star \end{bmatrix}^\star\in\bbC^{nN}$ be a unit vector with $z[k]\in\bbC^n$. Identify $z$ with a discrete-time signal by setting $z[k]=0$ for $k<0$ and $k\ge N$. Let $\hat z(s)$ be the Fourier transform of the signal, $z$. Then convolution rule and Plancharel theorem imply:
\begin{align*}
  z^\star \underline{R}z &= \sum_{k,\ell=-\infty}^{\infty}z[k]^\star R[k-\ell]z[\ell] \\
  &=\int_{-\frac{1}{2}}^{\frac{1}{2}} \hat z(s)^\star \Phi(s) \hat z(s)ds \\
  &\le \|\Phi\|_{\infty}
\end{align*}
Thus, $\|\underline{R}\|_2\le \|\Phi\|_{\infty}$. 
\end{IEEEproof}

\subsection{Special Results for the Gaussian Case}

The following lemma is a specialized version of the Hanson-Wright inequality for Gaussian random variables. See Exercise 2.17 of \cite{wainwright2019high}. 
\begin{lemma}
  \label{lem:gaussianHS}
  Let $A\in\bbC^{n\times n}$. Assume that either $A\in\bbR^{n\times n}$ or $A$ is Hermitian. 
  If $\bx$ is a Gaussian random vector with mean $0_{n\times 1}$ and covariance $I_n$, then for all $\epsilon\ge 0$:
  \begin{multline*}
  \bbP\left(\bx^\top A \bx - \bbE\left[\bx^\top A \bx \right]  > \epsilon \right)\le \\ \exp\left(-\frac{1}{8}\min\left\{\frac{\epsilon^2}{\|A\|_F^2},\frac{\epsilon}{\|A\|_2} \right\}\right).
  \end{multline*}
\end{lemma}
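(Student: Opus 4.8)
The plan is to reduce the complex/Hermitian quadratic form to a weighted sum of independent centered chi-squared variables, and then run a standard sub-exponential Chernoff argument while tracking constants so that the exponent $\frac18$ appears exactly.

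First I would symmetrize and reduce to the real symmetric case. Since $\bx$ is real-valued, $\bx^\top A \bx = \bx^\top \tilde A \bx$ pointwise, where $\tilde A = \tfrac12(A + A^\top)$ is the symmetric part. When $A$ is Hermitian, $\mathrm{Im}(A)$ is a real skew-symmetric matrix, so $\bx^\top \mathrm{Im}(A)\bx = 0$ and $\bx^\top A \bx = \bx^\top \mathrm{Re}(A)\bx$ with $\mathrm{Re}(A)$ real symmetric; when $A$ is real, $\tilde A$ is already real symmetric. In either case the replacement leaves both the probability and the mean on the left-hand side unchanged, while the norms only shrink: $\|\tilde A\|_F \le \|A\|_F$ and $\|\tilde A\|_2 \le \|A\|_2$, by the triangle inequality together with $\|A^\top\|_2 = \|A\|_2$, $\|A^\top\|_F = \|A\|_F$, and the analogous identities for complex conjugation. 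Because the right-hand side of the claimed bound is monotone increasing in $\|A\|_F$ and $\|A\|_2$, it suffices to prove the inequality for the real symmetric matrix $\tilde A$ with its own norms. So without loss of generality $A$ is real symmetric.

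Next I would diagonalize $A = Q\Lambda Q^\top$ with $Q$ orthogonal and $\Lambda = \diag(\lambda_1,\ldots,\lambda_n)$. Since $\bx$ is standard Gaussian, so is $Q^\top \bx$ by rotational invariance, and replacing $\bx$ by $Q^\top \bx$ leaves the distribution of $\bx^\top A \bx$ unchanged while turning $A$ into $\Lambda$. Hence it suffices to treat $A = \Lambda$, for which, writing $\bx_1,\ldots,\bx_n$ for the independent $N(0,1)$ components of $\bx$,
$$
\bx^\top A \bx - \bbE\left[\bx^\top A \bx\right] = \sum_{i=1}^n \lambda_i(\bx_i^2 - 1),
$$
using $\bbE[\bx^\top A \bx] = \Tr(A) = \sum_i \lambda_i$, and noting $\sum_i \lambda_i^2 = \|A\|_F^2$ and $\max_i |\lambda_i| = \|A\|_2$. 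The core estimate is then the moment generating function of $\bx_i^2 - 1$: a direct computation gives $\bbE[e^{\lambda(\bx_i^2 - 1)}] = e^{-\lambda}(1-2\lambda)^{-1/2}$ for $\lambda < \tfrac12$, and expanding $-\lambda - \tfrac12\log(1-2\lambda)$ shows $\bbE[e^{\lambda(\bx_i^2-1)}] \le e^{2\lambda^2}$ for $|\lambda| \le \tfrac14$; i.e. each summand is sub-exponential with parameters $(\nu,b) = (2,4)$. By independence the MGF of the weighted sum factorizes, so $\sum_i \lambda_i(\bx_i^2-1)$ is sub-exponential with $\nu_*^2 = 4\sum_i \lambda_i^2 = 4\|A\|_F^2$ and $b_* = 4\max_i|\lambda_i| = 4\|A\|_2$. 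Optimizing the Chernoff bound over the admissible range $|\lambda| < 1/b_*$ yields the two-regime tail
$$
\bbP\left(\sum_i \lambda_i(\bx_i^2-1) > \epsilon\right) \le \exp\left(-\frac12\min\left\{\frac{\epsilon^2}{\nu_*^2}, \frac{\epsilon}{b_*}\right\}\right),
$$
and substituting $\nu_*^2 = 4\|A\|_F^2$ and $b_* = 4\|A\|_2$ collapses the leading constant to $\tfrac12\cdot\tfrac14 = \tfrac18$, which is exactly the claimed inequality.

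The main obstacle is the constant bookkeeping in the MGF step: the factor $\tfrac18$ is precisely what the parameters $(\nu,b)=(2,4)$ for $\bx^2-1$ produce after the $\tfrac12\min\{\cdot\}$ Chernoff tail, so the range $|\lambda|\le \tfrac14$ in the sub-exponential estimate and the matching $1/b_*$ cutoff in the Chernoff optimization must be verified carefully; any looser MGF constant would degrade the exponent below $\tfrac18$. By comparison, the symmetrization with norm-monotonicity and the orthogonal diagonalization are routine.
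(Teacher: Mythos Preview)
Your proposal is correct and follows essentially the same route as the paper: symmetrize to a real symmetric matrix with no larger norms, orthogonally diagonalize to reduce to $\sum_i \lambda_i(\bx_i^2-1)$, observe this is $(2\|A\|_F,4\|A\|_2)$-sub-exponential, and apply the standard sub-exponential tail bound to recover the $\tfrac18$ constant. The only difference is that you spell out the MGF calculation for $\bx_i^2-1$ explicitly, whereas the paper simply cites Exercise~2.17 and Proposition~2.9 of Wainwright for those steps.
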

\begin{IEEEproof}
  Let $B=\frac{1}{2}(A+A^\top)$. Then under either assumption about $A$, $B$ is a real symmetric matrix such that $\bx^\top A \bx=\bx^\top B\bx$, $\|B\|_2\le \|A\|_2$, and $\|B\|_F\le \|A\|_F$.
  
Let $V$ be an orthogonal matrix such that $B=V\diag(\lambda)V^\top$, where $\lambda = \begin{bmatrix}\lambda_1 & \cdots & \lambda_n\end{bmatrix}^\top$ are the eigenvalues of $B$. Let $\by =  V^\top \bx$ so that
$$
\bx^\top A\bx = \bx^\top B \bx = \sum_{i=1}^n \lambda_i \by_i^2.
$$
Now $\by_i$ are independent Gaussian random variables with mean $0$ and variance $1$. 

Since $\|B\|_F=\|\lambda\|_2$ and $\|B\|_2=\|\lambda\|_{\infty}$, it follows that $\bx^\top A\bx$ is $(2\|B\|_F,4\|B\|_2)$-sub-exponential. Due to the inequalities, it must also be $(2\|A\|_F,4\|A\|_2)$-sub-exponential. The result then follows from Proposition 2.9 of \cite{wainwright2019high}.
\end{IEEEproof}

\begin{lemma}
  Let Assumption~\ref{a:gaussian} hold, so that $\by$ is a zero-mean Gaussian process. Let $J\in \bbC^{N\times N}$, $u\in\bbC^{n}$, $v\in \bbC^n$ be unit vectors such that one of the following conditions holds:
  \begin{enumerate}
  \item $J\in \bbR^{N\times N}$, $u\in \bbR^n$, and $v\in\bbR^n$ or
  \item $J$ is  Hermitian and $u=v$. 
  \end{enumerate}
  Then, for any $\epsilon >0$ the following bound holds: 
  \begin{multline*}
    \bbP\left(u^\star \bY J \bY^\top v - \bbE\left[ u^\star \bY J \bY^\top v\right] > \epsilon \right) \le \\
    \exp\left(-\frac{1}{8}\min\left\{
            \frac{\epsilon^2}{\|J\|_F^2 \|\Phi\|_\infty^2},
            \frac{\epsilon}{\|J\|_2 \|\Phi\|_{\infty}}
          \right\}\right).
  \end{multline*}
\end{lemma}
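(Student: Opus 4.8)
The plan is to reduce the scalarized quadratic form to a standard Gaussian chaos and then invoke Lemma~\ref{lem:gaussianHS}. First I would apply the preceding scalarization lemma to write $u^\star \bY J \bY^\top v = \bys^\top M \bys$ where $M = J^\top \otimes (vu^\star)$, recording the norm identities $\|M\|_2 = \|J\|_2$ and $\|M\|_F = \|J\|_F$. Since $\bys$ is a zero-mean Gaussian vector with covariance $\underline{R}$, I would whiten it: letting $\underline{R}^{1/2}$ denote the positive semidefinite square root, the vector $\underline{R}^{1/2}\bx$ with $\bx \sim N(0_{nN\times 1}, I_{nN})$ has covariance $\underline{R}^{1/2} I_{nN} \underline{R}^{1/2} = \underline{R}$, hence the same distribution as $\bys$ even when $\underline{R}$ is singular. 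Consequently $\bys^\top M \bys$ has the same distribution as $\bx^\top A \bx$ with $A := \underline{R}^{1/2} M \underline{R}^{1/2}$, and the two means coincide, so it suffices to bound the upper tail of $\bx^\top A \bx - \bbE[\bx^\top A \bx]$.

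Next I would verify that $A$ satisfies the hypotheses of Lemma~\ref{lem:gaussianHS}. In case (1), $J$ and $vu^\star = vu^\top$ are real, so $M$ and therefore $A$ are real. In case (2), $J$ Hermitian gives $\overline{J} = J^\top$, so $M^\star = \overline{J} \otimes (uu^\star) = J^\top \otimes (uu^\star) = M$; since $\underline{R}^{1/2}$ is real symmetric, $A^\star = \underline{R}^{1/2} M^\star \underline{R}^{1/2} = A$, so $A$ is Hermitian. In either case $A$ lands in the real-or-Hermitian class required by Lemma~\ref{lem:gaussianHS}.

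It then remains to control the norms of $A$. Using submultiplicativity of the spectral norm together with the mixed inequalities $\|BC\|_F \le \|B\|_2 \|C\|_F$ and $\|CB\|_F \le \|C\|_F \|B\|_2$, and the fact $\|\underline{R}^{1/2}\|_2^2 = \|\underline{R}\|_2$, I would obtain $\|A\|_2 \le \|\underline{R}\|_2 \|M\|_2$ and $\|A\|_F \le \|\underline{R}\|_2 \|M\|_F$. Lemma~\ref{lem:covFromSpec} supplies $\|\underline{R}\|_2 \le \|\Phi\|_\infty$, which combined with the norm identities for $M$ gives $\|A\|_2 \le \|\Phi\|_\infty \|J\|_2$ and $\|A\|_F \le \|\Phi\|_\infty \|J\|_F$. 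Since the exponent in Lemma~\ref{lem:gaussianHS} is monotone decreasing in both $\|A\|_2$ and $\|A\|_F$, substituting these bounds produces exactly the stated inequality.

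The main obstacle is not any single hard estimate but the bookkeeping of the whitening step: ensuring the distributional identity $\bys^\top M \bys \stackrel{d}{=} \bx^\top A \bx$ holds without assuming $\underline{R}$ is invertible, and confirming that $A$ is real in case (1) and Hermitian in case (2) so that Lemma~\ref{lem:gaussianHS} is applicable. Once these are settled, the norm bounds and the final substitution are routine.
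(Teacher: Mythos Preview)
Your proposal is correct and follows essentially the same route as the paper: whiten the Gaussian data vector so the quadratic form becomes $\bx^\top A\bx$ with standard normal $\bx$, verify $A$ is real (case 1) or Hermitian (case 2), bound $\|A\|_2$ and $\|A\|_F$ via $\|\underline{R}\|_2\le\|\Phi\|_\infty$, and invoke Lemma~\ref{lem:gaussianHS}. The only cosmetic differences are that the paper uses an arbitrary factorization $GG^\top=\underline{R}$ rather than the symmetric square root, and bounds $\|A\|_F$ through a trace computation with $\underline{R}\preceq\|\Phi\|_\infty I$ rather than the mixed inequality $\|BC\|_F\le\|B\|_2\|C\|_F$; both arguments are equivalent.
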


\begin{proof}
  If $\by$ is a Gaussian process then $\underline{\by}$ is identically distributed to $G\bx$ where $\bx$ is a Gaussian random vector with mean $0$ and covariance $I$ and $GG^{\top} = \underline{R}$. So then $u^\star \bY J\bY^\top v=\underline{\by}^\top (J^\top \otimes (vu^\star))\underline{\by}$  is identically distributed to
$$
\bx^\top G^{\top} (J^\top \otimes (vu^\star))G\bx.
$$
So, to apply Lemma~\ref{lem:gaussianHS}, we need to bound the norms. First we have
\begin{align}
  \nonumber
  \left\| G^{\top} (J^\top \otimes (vu^\star))G\right\|_2
  &\le \|G^{1/2}\|_2^2 \| (J^\top \otimes (vu^\star))\|_2 \\
  \nonumber
  &= \|J\|_2 \|\underline{R}\|_2 \\
  \label{eq:spectral2}
  &\le \|J\|_2 \|\Phi\|_2.
\end{align}
To bound the Frobenius norm, note that $\underline{R}\preceq \|\Phi\|_{\infty} I$ so that 
\begin{align}
  \nonumber
  \MoveEqLeft
  \left\|G^{\top} (J^\top \otimes (vu^\star))G\right\|_F^2 \\
  \nonumber
  &=\Tr\left( (J^\top \otimes (vu^\star)) \underline{R} (\overline{J} \otimes (uv^\star)  ) \underline{R} \right) \\
  \nonumber
  &\le \|\Phi\|_{\infty} \Tr\left( (J^\top \otimes (vu^\star)) (\overline{J} \otimes (uv^\star)  ) \underline{R} \right) \\
  \nonumber
  &\le \|\Phi\|_{\infty}^2 \Tr\left( (J^\top \otimes (vu^\star)) (\overline{J} \otimes (uv^\star)  ) \right) \\
  \label{eq:spectralFro}
  &= \|J\|_F^2 \|\Phi\|_{\infty}^2.
\end{align}
The result now follows by applying Lemma~\ref{lem:gaussianHS} with $A = G^\top (J^\top \otimes (vu^\star))G$. Note that if $J$, $u$, and $v$ are real, then so is $A$. Similarly, if $J$ is Hermitian and $u=v$, then $A$ is Hermitian. 
\end{proof}

\subsection{A Special Result for the Sub-Gaussian Case}

\begin{lemma}
  Let Assumption~\ref{a:subgaussian} hold. Let $J\in \bbC^{N\times N}$, $u\in\bbC^{n}$, $v\in \bbC^n$ be unit vectors such that one of the following conditions holds:
  \begin{enumerate}
  \item $J\in \bbR^{N\times N}$, $u\in \bbR^n$, and $v\in\bbR^n$ or
  \item $J$ is  Hermitian and $u=v$. 
  \end{enumerate}
  Then, for any $\epsilon >0$ the following bound holds:  
    \begin{multline*}
    \bbP\left(u^\star \bY J \bY^\top v - \bbE\left[ u^\star \bY J \bY^\top v\right] > \epsilon \right) \le \\
   2\exp\left(-2^{-15} \min\left\{\frac{\epsilon^2}{\sigma^4\|J\|_F^2\|\Phi\|_{\infty}^2},\frac{\epsilon}{\sigma^2\|J\|_2\|\Phi\|_\infty}\right\}\right).
  \end{multline*}

\end{lemma}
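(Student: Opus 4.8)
The plan is to follow the Gaussian lemma almost verbatim, replacing the Gaussian decoupling $\underline{\by}\stackrel{d}{=}G\bx$ with the moving-average representation supplied by Assumption~\ref{a:subgaussian}, and replacing the Gaussian Hanson-Wright estimate of Lemma~\ref{lem:gaussianHS} with the sub-Gaussian Hanson-Wright inequality whose explicit constants are established elsewhere in the paper.

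First I would decouple the data. Stacking $\by[k]=\sum_{\ell}h[k-\ell]\bzeta[\ell]$ over $k=0,\ldots,N-1$ expresses $\underline{\by}=\mathcal{H}\underline{\bzeta}$, where $\underline{\bzeta}$ collects the independent, zero-mean, unit-variance, $\sigma$-sub-Gaussian scalars $\bzeta_i[\ell]$ and $\mathcal{H}$ is the (real) block-Toeplitz matrix built from the impulse-response coefficients $h[\cdot]$. Independence and unit variance give $\bbE[\underline{\bzeta}\,\underline{\bzeta}^\top]=I$, hence $\mathcal{H}\mathcal{H}^\top=\underline{R}$, exactly the role played by $GG^\top=\underline{R}$ in the Gaussian argument. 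Combined with the scalarization identity $u^\star\bY J\bY^\top v=\underline{\by}^\top(J^\top\otimes(vu^\star))\underline{\by}$, this gives
$$u^\star\bY J\bY^\top v=\underline{\bzeta}^\top M\,\underline{\bzeta},\qquad M:=\mathcal{H}^\top\bigl(J^\top\otimes(vu^\star)\bigr)\mathcal{H},$$
a quadratic form in independent $\sigma$-sub-Gaussian variables.

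Next I would transfer the two norm bounds. Since $\|\mathcal{H}\|_2^2=\|\mathcal{H}\mathcal{H}^\top\|_2=\|\underline{R}\|_2$, Lemma~\ref{lem:covFromSpec} gives $\|M\|_2\le\|J\|_2\|\Phi\|_\infty$ exactly as in (\ref{eq:spectral2}); and the trace computation (\ref{eq:spectralFro}), which uses only $\mathcal{H}\mathcal{H}^\top=\underline{R}$ and $\underline{R}\preceq\|\Phi\|_\infty I$, gives $\|M\|_F\le\|J\|_F\|\Phi\|_\infty$. I would also verify, as in the Gaussian case, that $M$ is real in case~1 and Hermitian in case~2 (the latter because $J$ Hermitian forces $\overline{J}=J^\top$ and $u=v$ makes $vu^\star=vv^\star$ Hermitian), so that the structural hypothesis of Hanson-Wright is met. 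Inserting these bounds into the sub-Gaussian Hanson-Wright inequality, whose per-coordinate parameter is $\sigma$ and hence produces the factors $\sigma^4$ and $\sigma^2$, yields precisely the claimed $2\exp(-2^{-15}\min\{\cdots\})$ tail.

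The step I expect to require the most care is that $\mathcal{H}$, and therefore $M$, is genuinely infinite: the index $\ell$ ranges over all of $\bbZ$, so $\underline{\bzeta}^\top M\underline{\bzeta}$ is a quadratic form in infinitely many variables, while Hanson-Wright is stated for finite forms. I would handle this by truncation. Let $M_T$ denote the restriction of $M$ to noise indices $|\ell|\le T$; truncation cannot increase either norm, so $\|M_T\|_2\le\|J\|_2\|\Phi\|_\infty$ and $\|M_T\|_F\le\|J\|_F\|\Phi\|_\infty$ uniformly in $T$, and Hanson-Wright applies to each finite form with these same bounds. Because $\|R\|_1<\infty$ forces $\|M\|_F<\infty$, the forms $\underline{\bzeta}^\top M_T\underline{\bzeta}$ converge to $\underline{\bzeta}^\top M\underline{\bzeta}$ in $L^2$ (and their means converge), so the uniform one-sided tail bound passes to the limit. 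Verifying that this convergence is strong enough to preserve the inequality is the only genuinely new piece relative to the Gaussian proof; everything else is a transcription with $G$ replaced by $\mathcal{H}$.
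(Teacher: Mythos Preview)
Your proposal is correct and follows essentially the same route as the paper: truncate the noise to $|\ell|\le T$, apply the explicit-constant Hanson-Wright inequality (Theorem~\ref{thm:HW}) to the finite quadratic form $\underline{\bzeta}_T^\top G_T^\top(J^\top\otimes(vu^\star))G_T\,\underline{\bzeta}_T$ with the norm bounds (\ref{eq:spectral2})--(\ref{eq:spectralFro}), and then pass to the limit $T\to\infty$. The only point you leave implicit is that Theorem~\ref{thm:HW} is phrased in terms of the $\psi_2$-norm rather than the MGF parameter~$\sigma$, so one must first invoke $\|\bzeta_i[k]\|_{\psi_2}\le 2\sigma$ from (\ref{eq:subgaussianOrlicz}); the resulting factor $b^4=(2\sigma)^4$ is exactly what converts the Hanson-Wright constant $2^{-11}=1/2048$ into the stated $2^{-15}$.
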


\begin{IEEEproof}
  For all $T\ge 1$ let
  \begin{align*}
    \by_T[k] &= \sum_{\ell=-T}^T h[k-\ell]\bzeta[\ell] \\
    \bY_T &= \begin{bmatrix}\by_T[0] & \cdots & \by_T[N-1] \end{bmatrix} \\
    \hat \bPhi_T(s) &= \bY_TD(-s)AD(s)\bY_T^\top \\
    \underline{\by}_{T} &=  \begin{bmatrix}\by_T[0]^\top & \cdots & \by_T[N-1]^\top \end{bmatrix}^\top \\
     \underline{R}_T &= \bbE\left[\underline{\by}_T \underline{\by}_T^\top\right]. 
  \end{align*}

  Setting
  \begin{align*}
    \underline{\bzeta}_T & = \begin{bmatrix}\bzeta[-T]^\top & \cdots \bzeta[T]^\top \end{bmatrix}^\top \\
   G_T &= \begin{bmatrix}
    h[T] & \cdots & h[-T] \\
    \vdots \\
    h[N-1+T] & \cdots & h[N-1-T]
  \end{bmatrix} 
  \end{align*}
gives that $\underline{\by}_T = G_T \underline{\bzeta}_T$ and so $
  \underline{R}_T = G_TG_T^\top.
  $
 
  Note that
  $$
  G_{T+1} =
  \setlength\arraycolsep{2pt}
  \begin{bmatrix}
    \begin{bmatrix}
      h[T+1] \\
      \vdots \\
      h[N-1+T+1]
    \end{bmatrix}
    &
    G_T
    &
    \begin{bmatrix}
      h[-T-1] \\
      \vdots \\
      h[N-1-T-1]
    \end{bmatrix}
    \end{bmatrix}.
    $$
    It follows that $\underline{R}_T\preceq \underline{R}_{T+1}$. Furthermore, $\lim_{T\to\infty} \underline{R}_T=\underline{R}$. Thus, Lemma~\ref{lem:covFromSpec} implies that $\|G_T\|_2^2=\|\underline{R}_T\|_2 \le \|\Phi\|_{\infty}$. 

    Consider the scalar random variable
    \begin{align*}
      u^\star \bY_T J \bY_T^\top v &= \underline{\by}_T^\top (J^\top \otimes (vu^\star)) \underline{\by}_T \\
      &= \bzeta_T^\top G_T^\top (J^\top \otimes (vu^\star)) G_T \bzeta_T . 
    \end{align*}
    We can bound the deviation of this scalar random variable from its mean via the Hanson-Wright inequality with $A=G_T^\top (J^\top \otimes (vu^\star)) G_T$. Similar to  (\ref{eq:spectral2}) and (\ref{eq:spectralFro}), we have
    \begin{align*}
      \|G_T^\top (J^\top \otimes (vu^\star)) G_T\|_2 &\le \|J\|_2 \|\Phi\|_{\infty} \\
      \|G_T^\top (J^\top \otimes (vu^\star)) G_T\|_F^2 &\le \|J\|_F^2 \|\Phi\|_{\infty}^2. 
    \end{align*}
    Additionally, if $J$, $u$, and $v$ are real, then $A$ is real. If $J$ is Hermitian and $u=v$, then $A$ is also Hermitian.
    
    From Lemma~\ref{lem:subgaussianConstants} in Appendix~\ref{app:constants}, we have that $\|\bzeta_i[k]\|_{\psi_2}:=b\le 2\sigma$ for all $i$ and $k$. 
    Thus, Theorem~\ref{thm:HW} of Appendix~\ref{app:constants} implies that  
    \begin{multline*}
    \bbP\left(u^\star \bY_T J \bY_T^\top v - \bbE\left[ u^\star \bY_T J \bY_T^\top v\right] > \epsilon \right) \le \\
   2\exp\left(-2^{-15} \min\left\{\frac{\epsilon^2}{\sigma^4\|J\|_F^2\|\Phi\|_{\infty}^2},\frac{\epsilon}{\sigma^2\|J\|_2\|\Phi\|_\infty}\right\}\right).
  \end{multline*}
    Now since $\lim_{T\to\infty}\bY_T = \bY$, the result holds by dominated convergence.
  \end{IEEEproof}

\subsection{Proof of Lemma~\ref{lem:matrixConcentration}}

\const{HWmult}
\const{HWexp}
\const{HWsubgauss}

The previous two lemmas imply that there are constants $c_{\ref{HWmult}}$, $c_{\ref{HWexp}}$, $c_{\ref{HWsubgauss}}$ defined by:
\begin{align*}
  \textrm{Assumption}~\ref{a:gaussian} &\implies c_{\ref{HWmult}}=1, \quad  c_{\ref{HWexp}}=\frac{1}{8}, \quad &c_{\ref{HWsubgauss}}=1 \\
  \textrm{Assumption}~\ref{a:subgaussian} &\implies c_{\ref{HWmult}}=2, \quad c_{\ref{HWexp}}=2^{-15},\quad &c_{\ref{HWsubgauss}}=\sigma
\end{align*}
such that
\begin{multline}
  \label{eq:scalarizedConcentration}
  \bbP\left(
u^\star \bY J \bY^\top v -
\bbE\left[u^\star \bY J \bY v
\right] >\epsilon\right) \le \\
c_{\ref{HWmult}}\exp\left(-c_{\ref{HWexp}} \min\left\{\frac{\epsilon^2}{c_{\ref{HWsubgauss}}^4\|J\|_F^2\|\Phi\|_{\infty}^2},\frac{\epsilon}{c_{\ref{HWsubgauss}}^2\|J\|_2\|\Phi\|_\infty}\right\}\right),
\end{multline}
under corresponding assumptions about $J$, $u$, and $v$.

We complete the proof of Lemma~\ref{lem:matrixConcentration} by a covering argument, similar to the proof of Theorem 6.5 of \cite{wainwright2019high}. For any $\delta >0$, the Euclidean ball of dimension $n$ can be covered by a collection of at most $\left(1+\frac{2}{\delta}\right)^n$ balls with radius $\delta$. (See Example 5.8 of \cite{wainwright2019high}.) Let $\cC_n = \{w_1,\ldots,w_{Q_n}\}$ be the centers of such a covering with $\|w_i\|_2\le 1$ and $\delta = \frac{2}{9}$ so that $Q_n \le 10^n$.

For compact notation, let $\bS:=\bY J\bY^\top-\bbE\left[\bY J\bY^\top\right]$.

\paragraph*{Covering for Real $J$}
When $J$ is real, $\bS$ is also real. In this case
$$
\|\bS\|_2 = \sup_{\|u\|_2\le 1, \|v\|_2\le 1} u^\top \bS v
$$
where the supremum ranges vectors $u,v\in\bbR^n$ with Euclidean norm at most $1$. Given any $u,v\in \bbR^n$ with norm at most $1$, there are vectors $\hat u$ and $\hat v$ in $\cC_n$ such that $\|u-\hat u\|_2\le \frac{2}{9}$ and $\|v-\hat v\|_2\le \frac{2}{9}$.
\begin{align*}
  \MoveEqLeft[0]
  u^\top \bS v = \left(\hat u + (u-\hat u)\right)^\top \bS \left(\hat v + (v-\hat v)\right)) \\
  &=\hat u^\top S\hat v + (u-\hat u)^\top \bS \hat v + \hat u^\top \bS (v-\hat v) + (u-\hat u)^\top \bS (v-\hat v)  \\
  &\le \hat u^\top S \hat v + \left(\frac{4}{9}+\frac{4}{81} \right)\|\bS\|_2
    \le \hat u^\top S \hat v + \frac{1}{2}\|\bS\|_2.  
\end{align*}
The first inequality follows from the Cauchy-Schwartz inequality and submultiplicativity of the induced norm.

Maximizing the expression above on both sides leads to:
$$
\|\bS\|_2 \le \max_{\hat u,\hat v\in\cC_n} \hat u^\top \bS\hat v + \frac{1}{2} \|\bS\|_2
\implies \|\bS\|_2 \le 2\max_{\hat u,\hat v\in\cC_n} \hat u^\top \bS\hat v. 
$$

The proof is completed in this case via a union bound:
\begin{align*}
  \MoveEqLeft[0]
  \bbP\left(
  \|\bS\|_2 > \epsilon
  \right) \le \bbP\left( \max_{\hat u,\hat v \in \cC_n} \hat u^\top \bS \hat v > \epsilon/2\right) \\
          &\le \sum_{\hat u,\hat v\in\cC_n} \bbP\left( \hat u^\top \bS \hat v > \epsilon/2\right) \\
          &\le \hspace{-2pt} 10^{2n} 
  c_{\ref{HWmult}}\hspace{-1pt}\exp\left(\hspace{-2pt}-\frac{c_{\ref{HWexp}}}{4} \min\left\{\frac{\epsilon^2}{c_{\ref{HWsubgauss}}^4\|J\|_F^2\|\Phi\|_{\infty}^2},\frac{\epsilon}{c_{\ref{HWsubgauss}}^2\|J\|_2\|\Phi\|_\infty}\right\}\right)\hspace{-2pt}.
\end{align*}
The final inequality arises because $\cC_n\times \cC_n$ has at most $10^{2n}$ elements.

\paragraph*{Covering for Hermitian $J$}
When $J$ is Hermitian, $\bS$ is Hermitian as well. In this case
$$
\|\bS\|_2 = \sup_{\|u\|_2\le 1} |u^\star \bS u|
$$
where the supremum ranges over the unit ball of $\bbC^n$. The unit ball of $\bbC^n$ can be identified with the unit ball of $\bbR^{2n}$: If $u=v+jw$ with $v$ and $w$ real vectors, we have that $\|u\|_2\le 1$ if and only if $\left\|\begin{bmatrix}v^\top & w^\top \end{bmatrix}^\top\right\|_2\le 1$.

Let $\cC_{2n}$ be the centers of a $\frac{2}{9}$-covering of the unit ball of $\bbR^{2n}$ and define a $\frac{2}{9}$-covering of the unit ball of $\bbC^n$ by:
$$
\hat\cC_n = \left\{v + jw \middle| \begin{bmatrix}v^\top & w^\top \end{bmatrix}^\top \in\cC_{2n}\right\}.
$$
Since $\cC_{2n}$ has at most $10^{2n}$ elements, $\hat\cC_n$ also has at most $10^{2n}$ elements.

Similar to the real case, we have that for  all $\|u\|_2\le 1$, there exists $\hat u\in\hat \cC_n$ such that $\|u-\hat u\|_2\le \frac{2}{9}$. Then we have:
\begin{align*}
  \MoveEqLeft[0]
  \left|u^\star \bS u\right| = \left|\left(\hat u + (u-\hat u)\right)^\star \bS \left(\hat u + (u-\hat u)\right))\right| \\
  &\le \left|\hat u^\star S \hat u\right| + \frac{1}{2}\|\bS\|_2.  
\end{align*}
After maximizing both sides and re-arranging, we get $\|\bS\|_2 \le 2\max_{\hat u\in\hat \cC_n}\left|\hat u^\star \bS \hat u \right| $.

The proof is completed in this case by a union bound argument:
\begin{align*}
  \MoveEqLeft[0]
  \bbP\left(\|\bS\|_2 > \epsilon \right)
  \le
    \bbP\left(
    \max_{\hat u\in\hat\cC_n}\left| \hat u^\star \bS \hat u\right| > \epsilon/2
    \right) \\
  &\le \sum_{\hat u\in \hat\cC_n} \bbP\left(
    \left|\hat u^\star \bS \hat u\right| > \epsilon/2
    \right) \\
  &\le \sum_{\hat u\in \hat\cC_n} \left(\bbP\left(
    \hat u^\star \bS \hat u > \epsilon/2
    \right) +
\bbP\left(
    \hat u^\star (-\bS) \hat u > \epsilon/2
    \right)
    \right)
  \\
  &\le
 2\cdot 10^{2n} 
    c_{\ref{HWmult}}\hspace{-1pt}e^{\hspace{-2pt}-\frac{c_{\ref{HWexp}}}{4} \min\left\{\frac{\epsilon^2}{c_{\ref{HWsubgauss}}^4\|J\|_F^2\|\Phi\|_{\infty}^2},\frac{\epsilon}{c_{\ref{HWsubgauss}}^2\|J\|_2\|\Phi\|_\infty}\right\}}.
\end{align*}
\hfill\IEEEQED

\section{Proof of Theorem~\ref{thm:genConvergence}}
\label{app:genConvergence}

We prove parts \ref{it:concentration}), \ref{it:worst}), \ref{it:bias}), and \ref{it:fullWorst}). The proof of \ref{it:fullPointwise}) is omitted, since it is similar to the proof of \ref{it:fullWorst}). 

\subsection{Proof of \ref{it:concentration})}
Note that $\hat\Phi(s)=\bY J \bY^\top$ where $J=D(s)^\star AD(s)$. Since $D(s)$ is unitary, we have $\|J\|_2=\|A\|$ and $\|J\|_F=\|A\|_F$.  Since $A\in\bbR^{N\times N}$ is symmetric, $J$ is Hermitian and so Lemma~\ref{lem:matrixConcentration} implies that  
\begin{align*}
  \MoveEqLeft[0]
    \bbP\left(
      \left\|\hat \bPhi(s) - \bbE\left[\hat\bPhi(s)\right]\right\|_2 > \epsilon
  \right) \\
  &\le
    10^{2n} c_{\ref{ConcentrationMult}}e^{-c_{\ref{ConcentrationExp}} \min\left\{\frac{\epsilon^2}{c_{\ref{ConcentrationSubGauss}}^4\|A\|_F^2\|\Phi\|_{\infty}^2},\frac{\epsilon}{c_{\ref{ConcentrationSubGauss}}^2\|A\|_2\|\Phi\|_\infty}\right\}} \\
  &\le  10^{2n} c_{\ref{ConcentrationMult}}e^{-\frac{c_{\ref{ConcentrationExp}}}{\max\{\|A\|_2,\|A\|_F^2\}} \min\left\{\frac{\epsilon^2}{c_{\ref{ConcentrationSubGauss}}^4\|\Phi\|_{\infty}^2},\frac{\epsilon}{c_{\ref{ConcentrationSubGauss}}^2\|\Phi\|_\infty}\right\}}.
\end{align*}
The right side is at most $\delta$ if and only if $\frac{1}{\max\{\|A\|_2,\|A\|_F^2\}}\ge \xi$.

\subsection{Proof of \ref{it:worst})}

For this proof, let $\bM(s)=\hat\bPhi(s)-\bbE\left[\hat\bPhi(s)\right]$ and $\bC[k]=\bY B[k] \bY^\top -\bbE\left[\bY B[k] \bY^\top \right]$ so that (\ref{eq:expandedEstimator}) implies
$$
\bM(s)=\sum_{k=-\hat N+1}^{\hat N-1} e^{-j2\pi sk}\bC[k].
$$
Here we also used that $B[k]=0$ for $|k|\ge \hat N$.

Note that the quantity we must bound can be expressed as $\|\bM\|_{\infty}=\sup_{|s|\le \frac{1}{2}} \|\bM(s)\|_2$. 
$$
\|\bM\|_{\infty} = \sup_{\|u\|=1,|s|\le \frac{1}{2}} \left|u^\star \bM(s) u \right|
$$
where $u$ ranges over unit vectors in $\bbC^n$.

To eliminate the supremum over $s$, we will use a covering argument. Fix a covering of $\left[-\frac{1}{2},\frac{1}{2}\right]$ with intervals of length $\frac{1}{\pi \hat N^2}$, which correspond to balls of radius $\frac{1}{2\pi \hat N^2}$. Let $\hat\cC$ denote the corresponding centers of the intervals. Note that $\hat \cC$ can be chosen to have at most $1+\pi \hat N^2$ elements.   

For any $s\in \left[-\frac{1}{2},\frac{1}{2}\right]$, there is an $\hat s\in\hat \cC$ such that $|s-\hat s|\le \frac{1}{2\pi \hat N^2}$. Then we can bound:
\begin{align*}
  \MoveEqLeft
  \|\bM(s)\|_2 \\
  &= \|\bM(\hat s) +\bM(s)-\bM(\hat s)\|_2 \\
  &\le \|\bM(\hat s)\|_2 +\|\bM(s)-\bM(\hat s)\|_2 \\
  &= \|\bM(\hat s)\|_2 + \left\| \sum_{|k|<\hat N} \left(e^{-j2\pi sk}-e^{-j2\pi \hat s k} \right) \bC[k]\right\|_2 \\
  &\le \|\bM(\hat s)\|_2 +  \sum_{|k|<\hat N} \left| e^{-j2\pi sk}-e^{-j2\pi \hat s k} \right| \|\bC[k]\|_2 \\
  &\le \|\bM(\hat s)\|_2 +  2\pi |s-\hat s| \sum_{|k|<\hat N} |k| \|\bC[k]\|_2 \\
  &\le \|\bM(\hat s)\|_2 + 2\pi |s-\hat s| \left(\max_{|k|<\hat N} \|\bC[k]\|_2\right) \sum_{|i|<\hat N} |i| \\
  &\le \|\bM(\hat s)\|_2 + 2\pi |s-\hat s|\hat N^2 \left(\max_{|k|<\hat N} \|\bC[k]\|_2\right) \\
  &\le \|\bM(\hat s)\|_2 + \max_{|k|<\hat N} \|\bC[k]\|_2.
\end{align*}
The final inequality follows from the choice of $\hat s$.

Taking suprema over $s$ shows that
$$
\|\bM\|_\infty\le \max_{\hat s\in \hat\cC} \|\bM(\hat s)\|_2 + \max_{|k|<\hat N} \|\bC[k]\|_2.
$$
Thus, we can use a union bounding argument to show:
\begin{align}
  \nonumber
  \MoveEqLeft[0]
  \bbP\left(\|\bM\|_\infty > \epsilon \right) \\
  \nonumber
  &\le
    \bbP\left(
    \max_{\hat s \in\hat \cC} \|\bM(\hat s)\|_2 > \frac{\epsilon}{2}
    \right) + \bbP\left(
    \max_{|k|<\hat N} \|\bC[k]\|_2 >\frac{\epsilon}{2}
    \right) \\
  \label{eq:worstUnion}
  &\le \sum_{\hat s\in\hat \cC} \bbP\left(
    \|\bM(\hat s)\|_2 > \frac{\epsilon}{2}
    \right)+\sum_{|k|<\hat N}\bbP\left( \|\bC[k]\|_2 >\frac{\epsilon}{2} \right).
\end{align}
So, to make the overall sum at most $\delta$, it suffices that each individual summation is at most $\delta/2$. 

The first sum on the right of (\ref{eq:worstUnion}) can be bounded using part \ref{it:concentration}), the assumption that $g\ge \max\{\|A\|_2,\|A\|_F^2\}$, and the fact that $|\hat \cC|\le 1+\pi \hat N^2\le 5\hat N^2$: 
\begin{align*}
  \MoveEqLeft
\sum_{\hat s\in\hat \cC} \bbP\left(
    \|\bM(\hat s)\|_2 > \frac{\epsilon}{2}
  \right)
\\
  &\le 5\hat N^2 10^{2n} c_{\ref{ConcentrationMult}}e^{-\frac{c_{\ref{ConcentrationExp}}}{g} \min\left\{\frac{\epsilon^2}{4c_{\ref{ConcentrationSubGauss}}^4\|\Phi\|_{\infty}^2},\frac{\epsilon}{2c_{\ref{ConcentrationSubGauss}}^2\|\Phi\|_\infty}\right\}}. 
\end{align*}

To make the right side at most $\delta/2$, it suffices to have $\frac{1}{g}\ge \alpha(\epsilon/2)\left(\log(5\hat N^2)+\beta(\delta/2)\right)$.

  To bound the second sum on the right of (\ref{eq:worstUnion}), recall that $g\ge \|B[k]\|_2 $ and $g\ge \|B[k]\|_F^2$ for $|k| <\hat N$. Since $B[k]\in\bbR^{N\times N}$ and there are $2\hat N -1 < 2\hat N$ terms in the sum, Lemma~\ref{lem:matrixConcentration} implies that
  \begin{align*}
    \MoveEqLeft
    \sum_{|k|<\hat N}\bbP\left( \|\bC[k]\|_2 >\frac{\epsilon}{2} \right)
    \\
    &\le 2\hat N 10^{2n} c_{\ref{ConcentrationMult}}e^{-\frac{c_{\ref{ConcentrationExp}}}{g} \min\left\{\frac{\epsilon^2}{4c_{\ref{ConcentrationSubGauss}}^4\|\Phi\|_{\infty}^2},\frac{\epsilon}{2c_{\ref{ConcentrationSubGauss}}^2\|\Phi\|_\infty}\right\}}
  \end{align*}
  To make the right side at most $\delta/2$, it suffices to have $\frac{1}{g}\ge \alpha(\epsilon/2)\left(\log(2\hat N)+\beta(\delta/2)\right)$, which is true if $\frac{1}{g}\ge \alpha(\epsilon/2)\left(\log(5\hat N^2)+\beta(\delta/2)\right)$.

  \subsection{Proof of \ref{it:bias})}
Since $b[k]\in [0,1]$ and $b[k] \ge 1-\frac{\epsilon}{2\|R\|_1}$, it follows that $\left|1-b[k]\right|\le \frac{\epsilon}{2\|R\|_1}$.
  Using the triangle inequality followed by the conditions on $b[k]$ gives:
  \begin{align*}
    \MoveEqLeft
  \left\| \Phi(s)-
    \bbE\left[\hat\bPhi(s) \right] \right\|_2 \le \sum_{k=-\infty}^{\infty}|1-b[k]| \|R[k]\|_2 \\
    &\le \frac{\epsilon}{2\|R\|_1} \sum_{|k|<\hat M} \|R[k]\|_2 + \sum_{|\ell| \ge \hat M} \|R[\ell]\|_2 \\
    &\le \frac{\epsilon}{2} + \frac{\epsilon}{2}.
  \end{align*}

\subsection{Proof of \ref{it:fullWorst})}
Maximizing both sides of the triangle inequality from (\ref{eq:triangle}) gives
\begin{multline*}
  \sup_{s\in\left[-\frac{1}{2},\frac{1}{2}\right]}\|\Phi(s)-\hat\bPhi(s)\|_2\le
  \sup_{s\in\left[-\frac{1}{2},\frac{1}{2}\right]}\left\|\Phi(s)-\bbE\left[\hat\bPhi(s)\right]\right\|_2
\\+
\sup_{s\in\left[-\frac{1}{2},\frac{1}{2}\right]}\
\left\|\hat \bPhi(s)-\bbE\left[\hat\bPhi(s)\right]\right\|_2.
\end{multline*}
Assuming the conditions of \ref{it:bias}) implies that $\sup_{s\in\left[-\frac{1}{2},\frac{1}{2}\right]}\left\|\Phi(s)-\bbE\left[\hat\bPhi(s)\right]\right\|_2 \le \epsilon$ surely. So, if the left side is greater than $2\epsilon$, we must have that $\sup_{s\in\left[-\frac{1}{2},\frac{1}{2}\right]}\left\|\Phi(s)-\bbE\left[\hat\bPhi(s)\right]\right\|_2 > \epsilon$, which holds with probability at most $\delta$ because the conditions of \ref{it:worst}) are also assumed. 
\hfill\IEEEQED

\subsection{Proof of Corollary~\ref{cor:gen}}
\label{app:corGen}

The first two parts are a direct consequence of Theorem~\ref{thm:genConvergence} and the inverse formula
$\alpha^{-1}(t) = c_{\ref{ConcentrationSubGauss}}^2\|\Phi\|_\infty\max\left\{t^{-1},t^{-1/2}\right\}$. The third part bounds the bias in the important special case that the autocovariance decays geometrically, and is found by direct calculation. 

Using $a$ and $b$ as defined in part~\ref{it:unknownNorm} gives
\begin{align*}
  \left\|\hat\bPhi-\Phi\right\|_{\infty} &\le b +  \left\|\hat \bPhi-\bbE\left[\hat\bPhi\right]\right\| \\
                                         &\overset{\ref{it:worstCor})}\le b + a \|\Phi\|_{\infty} \\
  &\le b+ a\left(\|\hat\bPhi\|_{\infty}+\left\|\Phi-\hat\bPhi\right\|_{\infty} \right)
\end{align*}
The result now follows by re-arranging. \hfill\IEEEQED

\section{Proofs for Specific Estimators}
\label{app:specific}
For all the specific estimators, we utilize Theorem~\ref{thm:genConvergence}. To this end, we derive upper bounds on $\|A\|_2$, $\|A\|_F^2$, $\|B[k]\|_2$, and $\|B[k]\|_F^2$ and derive sufficient conditions on $b[k]$ to achieve the desired bias. 

\subsection{Proof of Proposition~\ref{prop:biasedPeriodogram} on Biased Periodograms}

For all $|k|<N$, we have $b[k] = 1-\frac{|k|}{N}\in [0,1]$. Then $b[k]\ge 1-\frac{\epsilon}{2\|R\|_1}$ if and only if $|k|\le  \frac{\epsilon N}{2\|R\|_1}$. So, to have $b[k]\ge 1-\frac{\epsilon}{2\|R\|_1}$ for all $|k|<\hat M(\epsilon)$, it suffices to have $\hat M(\epsilon) \le \frac{N\epsilon}{2\|R\|_1}$.  
\hfill\IEEEQED

\subsection{Proof of Proposition~\ref{prop:unbiasedPeriodogram} on Unbiased Periodograms}
For all $|k|<N$, we have $b[k]=1$. So to have $b[k]\ge 1-\frac{\epsilon}{2\|R\|_1}$ for all $|k|<\hat M(\epsilon)$ it suffices that $N\ge \hat M(\epsilon)$.
\hfill\IEEEQED

\subsection{Proof of Theorem~\ref{thm:BTconvergence} on Blackman-Tukey Estimators}
\label{app:BT}

To prove \ref{it:BTconcentration}) it suffices to show $\|A\|_2 \le \frac{(2M-1)}{N}$ and $ \|A\|_F^2\le \frac{(2M-1)}{N}$. 

    Since $A$ is symmetric, the induced norm can be expressed as $\|A\|_2 = \sup_{\|u\|_2\le 1} |u^\top Au|$, where the supremum ranges over real-valued vectors with norm at most $1$. Given any vector $u\in\bbR^N$, we have
  \begin{align*}
    u^\top NA u = w[0]u^\top u + \sum_{i=1}^{M-1}(w[-i]+w[i])\sum_{k=i}^{N-1}u[k-i]u[k]. 
  \end{align*}
  So, if $\|u\|_2\le 1$, 
  it follows that
  \begin{align*}
    |u^\top NA u |
                   &\le 1 + 2\sum_{i=1}^{M-1}\sum_{k=i}^{N-1}|u[k-i]||u[k]| \\
                &\le 1+\sum_{i=1}^{M-1}\sum_{k=i}^{N-1}\left(|u[k-i]|^2+|u[k]|^2\right) \\
                &\le 1+2(M-1)
  \end{align*}
  The bound on $\|A\|_2$ follows by dividing by $N$.

  The Frobenius norm can be bounded as:
  \begin{align*}
    N^2 \|A\|_F^2 &= \sum_{k=-M+1}^{M-1}w[k]^2 (N-|k|) \\
                  &\le \sum_{k=-M+1}^{M-1}(N-|k|)
                    \le N(2M-1)
  \end{align*}
  The upper bound on the Frobenius norm follows by dividing by $N^2$, and \ref{it:BTconcentration}) is proved. 

  Now we prove \ref{it:BTworst}). We have that $B[k] = 0$ for $|k| \le M$, so set $\hat N = M$.

    Direct calculation gives: 
    \begin{align*}
      \|B[k]\|_2 &= \frac{|w[k]|}{N}\le \frac{1}{N} \\
      \|B[k]\|_F^2 &= \frac{|w[k]|^2(N-|k|)}{N^2}\le \frac{1}{N}. 
    \end{align*}
    So, we can take $g = \frac{2M-1}{N}$.

Now we prove \ref{it:BTbias}). 
  Note that 
$$
b[k] = \begin{cases}
  \frac{(N-|k|)w[k]}{N} & |k| < M \\
  0 & |k| \ge M.
\end{cases}
$$
So, if $0\le w[k] \le 1$, we have $0\le b[k] \le 1$ as well. Furthermore, for $|k|<M$, we  have that $b[k]\ge 1- \frac{\epsilon}{2\|R\|_1}$ if and only if
\begin{equation}
  \label{eq:BTWeightBound}
w[k]\ge \frac{1-\frac{\epsilon}{2\|R\|_1}}{1-\frac{|k|}{N}}.
\end{equation}

To ensure that \eqref{eq:BTWeightBound} can be satisfied with $|w[k]|\le 1$, the right side must be bounded above by $1$, which occurs
if and only if $|k|\le \frac{N\epsilon}{2\|R\|_1}$. Thus, if $\hat M(\epsilon)\le \frac{N\epsilon}{2\|R\|_1}$, the bias bound from \ref{it:BTbias})
will be achieved as long as (\ref{eq:BTWeightBound}) holds for $|k|<\hat M$ and $w[k] \in [0,1]$ for $|k|\ge \hat M(\epsilon)$. 
\hfill\IEEEQED

\subsection{Proof of Theorem~\ref{thm:BartlettConvergence} on Bartlett Estimators}

Part \ref{it:BartlettConcentration}) follows because $\|A\|_2=\|A\|_F^2 = \frac{M}{N}$, by direct calculation.

Now we prove \ref{it:BartlettWorst}). We have $\hat N=M$. For $|k|<M$ direct calculation gives
\begin{align*}
  \|B[k]\|_2 &= \frac{1}{N} \\
  \|B[k]\|_F^2& = \frac{N-L|k|}{N^2}\le \frac{1}{N}.
\end{align*}
So, we can take $g=\frac{M}{N}$. 

Now we prove \ref{it:BartlettBias}). For $|k| < M$ we have
$$
b[k] =\frac{L(M-|k|)}{LM}=1-\frac{|k|}{M}. 
$$
Let $\hat \epsilon = \frac{\epsilon}{2\|R\|_1}$. We see that $b[k] \ge 1-\hat\epsilon$ if and only if $|k| \le M\hat\epsilon$. So, to ensure that $b[k]\ge 1-\hat \epsilon$ for all $|k|< \hat M$, it suffices to have $\hat M(\epsilon) \le M\hat \epsilon$.
\hfill\IEEEQED

\subsection{Proof of Theorem~\ref{thm:WelchConvergence} on Welch Estimators}

First we prove \ref{it:WelchConcentration}). It suffices to show that $\|A\|_2\le \frac{1+2\frac{M}{K}}{S}$ and $\|A\|_F^2\le \frac{1+2\frac{M}{K}}{S}$. 

 Without loss of generality, assume that $\|v\|_2=1$. Indeed, the normalization in (\ref{eq:WelchFT}) implies that the window $v/\|v\|_2$ leads to the same estimator as $v$.

  For $k=0,\ldots,\ceil*{\frac{M}{K}}-1$, let $\cI_k = \left\{i\in \{0,\ldots,S-1\} | i\mod \ceil*{\frac{M}{K}}=k\right\}$. The sum in (\ref{eq:AWelch}) can be re-grouped to give:
  \begin{align}
    \nonumber
  SA &= \sum_{k=0}^{\ceil*{\frac{M}{K}}-1}\sum_{i\in\cI_k}
\setlength\arraycolsep{2pt}
  \begin{bmatrix}
    0_{iK\times iK}  \\
    & vv^\top \\
    && 0_{(N-iK-M)\times (N-iK-M)}
  \end{bmatrix} \\
    \label{eq:regroup}
    &=: \sum_{k=0}^{\ceil*{\frac{M}{K}}-1} C_k
  \end{align}

  The matrices, $C_k$, are block diagonal with blocks either $vv^\top$ or zero matrices. Indeed, if $p<q$ are both in $\cI_k$, then $qK-pK\ge M$, and the $vv^\top$ blocks in the $p$th and $q$th matrices in the original sum from (\ref{eq:AWelch}) have size $M\times M$. As a result, there is no overlap in the non-zero portions of these matrices. Now, since $v$ is a unit vector, we have that $\|C_k\|_2\le 1$. So, the triangle inequality implies that $\|SA\|_2\le \ceil*{\frac{M}{K}}$. The bound on $\|A\|_2$ follows by dividing by $S$.

  To bound $\|A\|_F^2$, first note that we can rewrite:
  $$
  SA= \sum_{i=0}^{S-1}\begin{bmatrix}
      0_{iK\times 1} \\
      v \\
      0_{(N-iK-M)\times 1}
    \end{bmatrix}
    \begin{bmatrix}
      0_{iK\times 1} \\
      v \\
      0_{(N-iK-M)\times 1}
    \end{bmatrix}^\top
    $$
    As a result, we have that
    \begin{align*}
      \|SA\|_F^2 
      &= S\\
      &\hspace{-30pt}
                   + 2\sum_{p=0}^{S-2}\sum_{q=p+1}^{S-1}
                  \left( \begin{bmatrix}
      0_{pK\times 1} \\
      v \\
      0_{(N-pK-M)\times 1}
    \end{bmatrix}^\top
      \begin{bmatrix}
      0_{qK\times 1} \\
      v \\
      0_{(N-qK-M)\times 1}
    \end{bmatrix}
      \right)^2 \\
      &\le S+2(S-1)\left(\ceil*{\frac{M}{K}} - 1\right).
    \end{align*}
    The inequality follows because the vectors in the inner products are all unit vectors, and so the inner products have magnitude at most $1$ by the Cauchy-Schwartz inequality. Furthermore, if $q\ge \ceil*{\frac{M}{K}}$, then $qK-pK \ge M$, and so the non-zero portions of the corresponding vectors have no overlap. As a result, at most $\ceil*{\frac{M}{K}}-1$ terms in the inner sum can be non-zero. The bound on $\|A\|_F^2$ follows by dividing by $S^2$ and simplifying.

    Now we prove \ref{it:WelchWorst}). First note  that $\hat N=M$. We will show that $\|B[k]\|_2\le \frac{1}{S}$ and $\|B[k]\|_F^2 \le \frac{1}{S}$ for $|k|<M$. Thus, in this case, we can take $g=\frac{1+2\frac{M}{K}}{S}$.

    To bound $\|B[k]\|_2$, 
    we first analyze the diagonal of $SA$. Each entry on the diagonal is of the form
    \begin{equation}
      \label{eq:WelchDiagonal}
    SA_{p,p}=\sum_{i\in \cJ_p} v[i]^2 \overset{\|v\|_2=1}{\le} 1,
    \end{equation}
    where $\cJ_p\subset \{0,\ldots,M-1\}$. 

    Now, for any $p\ne q$, positive semidefiniteness implies that  $(SA_{p,q})^2 \le (SA_{p,p}) (SA_{q,q}) \le 1$. 
    It  now follows that $\|B[k]\|_2\le \frac{1}{S}$ for all $|k|<M$.

    To bound $\|B[k]\|_F^2$, symmetry of $A$ combined with (\ref{eq:expandedMatrices}) and (\ref{eq:expandedNorms}) gives for $|k|<M$:
    \begin{align*}
      \|SB[k]\|_F^2 &=\sum_{i=|k|}^{N-1} (SA_{i,i-|k|})^2 \\
                    &\overset{SA\succeq 0}{\le} \sum_{i=|k|}^{N-1} (SA_{i,i})(SA_{i-|k|,i-|k|}) \\
                    &\overset{\eqref{eq:WelchDiagonal}}{\le} \sum_{i=|k|}^{N-1} SA_{i,i}\\
      &\le \sum_{i=0}^{N-1}SA_{i,i}=S.
    \end{align*}
    Dividing both sides by $S^2$ gives $\|B[k]\|_F^2 \le \frac{1}{S}$.

Now we prove \ref{it:WelchBias}). To state the conditions for the original $v$, we do not assume that $v$ is normalized, but assume that $v[k]\ge 0$.  So, in this case
$$
b[k]=\begin{cases}
  \sum_{i=|k|}^{M-1}\frac{v[i-|k|]v[i]}{\|v\|_2^2} & |k| < M \\
  0 & |k| \ge M.
  \end{cases}
  $$
  So, it suffices to have  $M\ge \hat M(\epsilon)$ and for $|k|<\hat M(\epsilon)$ to have
  $$
  \sum_{i=|k|}^{M-1}\frac{v[i-|k|]v[i]}{\|v\|_2^2}\ge 1-\frac{\epsilon}{2\|R\|_1} 
  $$
  \hfill\IEEEQED

\section{Tracking Constants in Concentration Bounds}
\label{app:constants}
The goal of this appendix is to derive explicit expressions arising in the concentration bounds used in the paper. In particular, an explicit bound for the constant in the Hanson-Wright inequality is derived. 

Let $\psi_2(x)=e^{x^2}-1$ and define the $\psi_2$-Orlicz norm by:
$$
  \|\bx\|_{\psi_2} = \inf\left\{t>0 \middle| \bbE\left[e^{\bx^2/t^2}-1 \right]\le 1\right\}.
$$

\begin{lemma}
  \label{lem:subgaussianConstants}
Let $\bx$ be a scalar zero-mean random variable.
\begin{itemize}
\item If $\|\bx\|_{\psi_2}\le b$, then
  \begin{subequations}
    \begin{align}
      \label{eq:orcliczProb}
      \bbP\left(|\bx| > t\right) &\le 2e^{-t^2/b^2} \quad \forall t\ge 0 \\ 
      \label{eq:orliczEvenMoments}
      \bbE\left[\bx^{2k}\right] & \le 2b^{2k} k! \quad \forall k\ge 0 \\
      \label{eq:orliczMGF}
      \bbE\left[
      e^{\lambda \bx}
      \right] &\le e^{4\lambda^2 b^2} \quad \forall \lambda\in\bbR \\
      \label{eq:orliczCenteredVar}
      \bbE\left[\left(\bx^{2}-\bbE[\bx^2]\right)^k\right] & \le 2(2b^2)^{k} k! \quad \forall k\ge 0
      \\
      \hspace{-30pt}
      \label{eq:squareSubexp}
      \bbE\left[
      \exp\left(\lambda (\bx^2-\bbE[\bx^2])\right)
      \right] &\le \exp((4b^2)^2\lambda^2) \quad \forall |\lambda| \le \frac{1}{4b^2} 
    \end{align}
  \end{subequations}
  \item If $\bbE\left[e^{\lambda \bx}\right]\le e^{\frac{\lambda^2\sigma^2}{2}}$ for all $\lambda\in\bbR$, then
  \begin{subequations}
  \begin{align}
    \label{eq:quadExp}
    \bbE\left[\exp\left(\frac{\lambda \bx^2}{2\sigma^2}\right)\right] & \le\frac{1}{\sqrt{1-\lambda}} \quad \forall\lambda\in[0,1) \\
    \label{eq:subgaussianOrlicz}
    \|\bx\|_{\psi_2}&\le \sqrt{\frac{8}{3}}\sigma \le 2\sigma \\
                          \label{eq:subgaussianVar}
    \bbE\left[\bx^2\right]&\le \sigma^2 
  \end{align}
\end{subequations}
\end{itemize}
\end{lemma}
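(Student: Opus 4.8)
The plan is to establish the two groups of inequalities by descending each chain of implications from its defining hypothesis. For the first group, the hypothesis $\|\bx\|_{\psi_2}\le b$ is by definition $\bbE[e^{\bx^2/b^2}]\le 2$. Inequality \eqref{eq:orcliczProb} then follows from Chernoff's bound, $\bbP(|\bx|>t)=\bbP(e^{\bx^2/b^2}>e^{t^2/b^2})\le 2e^{-t^2/b^2}$. For \eqref{eq:orliczEvenMoments} I would Taylor-expand $\bbE[e^{\bx^2/b^2}]=\sum_{k\ge0}\bbE[\bx^{2k}]/(b^{2k}k!)\le2$, which gives termwise $\bbE[\bx^{2k}]\le 2b^{2k}k!$ for all $k$, and the sharper $\bbE[\bx^{2k}]\le b^{2k}k!$ for $k\ge1$ (since the $k=0$ term already contributes $1$). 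I would keep this sharper version for \eqref{eq:orliczMGF}: symmetrizing with an independent copy $\bx'$ gives, via Jensen and $\bbE[\bx]=0$, $\bbE[e^{\lambda\bx}]\le\bbE[e^{\lambda(\bx-\bx')}]$; since $\bx-\bx'$ is symmetric only even powers survive, and bounding $\bbE[(\bx-\bx')^{2k}]\le 2^{2k}\bbE[\bx^{2k}]\le 2^{2k}b^{2k}k!$ turns the series into $\sum_{k\ge0}(4\lambda^2b^2)^k k!/(2k)!$, which the inequality $(k!)^2\le(2k)!$ bounds by $e^{4\lambda^2b^2}$.

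The remaining two inequalities of the first group concern $\bx^2$. For \eqref{eq:orliczCenteredVar} I would apply Minkowski's inequality to $\bx^2-\bbE[\bx^2]$ in $L^k$: because $\bbE[\bx^2]=\|\bx^2\|_1\le\|\bx^2\|_k$, we get $\|\bx^2-\bbE[\bx^2]\|_k\le2\|\bx^2\|_k$, so $\bbE[(\bx^2-\bbE[\bx^2])^k]\le2^k\bbE[\bx^{2k}]\le2(2b^2)^k k!$ by \eqref{eq:orliczEvenMoments}. Then \eqref{eq:squareSubexp} follows by Taylor-expanding $\bbE[e^{\lambda(\bx^2-\bbE[\bx^2])}]$, discarding the vanishing first-order term, and bounding the tail with \eqref{eq:orliczCenteredVar}, which yields the geometric series $2\sum_{k\ge2}(2b^2|\lambda|)^k$; for $|\lambda|\le1/(4b^2)$ the ratio satisfies $2b^2|\lambda|\le1/2$, so the sum is at most $16b^4\lambda^2=(4b^2)^2\lambda^2$, and $1+x\le e^x$ completes it.

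For the second group, starting from $\bbE[e^{\lambda\bx}]\le e^{\lambda^2\sigma^2/2}$, I would prove \eqref{eq:quadExp} with the Gaussian-integral trick: introducing an independent standard Gaussian $\mathbf{g}$, write $e^{\lambda\bx^2/(2\sigma^2)}=\bbE_{\mathbf{g}}[e^{\sqrt{\lambda}\bx\mathbf{g}/\sigma}]$, exchange the expectations, apply the sub-Gaussian MGF in $\bx$ to get $\bbE_{\mathbf{g}}[e^{\lambda\mathbf{g}^2/2}]$, and evaluate this Gaussian integral as $(1-\lambda)^{-1/2}$ for $\lambda\in[0,1)$. Inequality \eqref{eq:subgaussianOrlicz} then follows by taking $\lambda=3/4$, which makes $(1-\lambda)^{-1/2}=2$ and hence $\bbE[e^{\bx^2/t^2}]\le2$ with $t^2=2\sigma^2/\lambda=8\sigma^2/3$, so $\|\bx\|_{\psi_2}\le\sqrt{8/3}\,\sigma\le2\sigma$. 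Finally, \eqref{eq:subgaussianVar} follows by comparing second-order behavior: dividing $\bbE[e^{\lambda\bx}]-1\le e^{\lambda^2\sigma^2/2}-1$ by $\lambda^2/2$ and letting $\lambda\to0$ gives $\bbE[\bx^2]\le\sigma^2$.

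The main obstacle is not the structure of any single step but tracking the exact constants, and in particular securing the factor of $4$ in \eqref{eq:orliczMGF}. Combining the naive symmetrization with the stated moment bound $\bbE[\bx^{2k}]\le2b^{2k}k!$ leaves a spurious factor of $2$ multiplying $e^{4\lambda^2b^2}$; removing it requires the observation that the $k=0$ Taylor term of $\bbE[e^{\bx^2/b^2}]$ already saturates the constant, so that $\bbE[\bx^{2k}]\le b^{2k}k!$ holds for $k\ge1$. Identifying this sharper estimate, together with the correct $\lambda$-range in \eqref{eq:squareSubexp} and the choice $\lambda=3/4$ in \eqref{eq:quadExp}, is where the care is needed; the algebra is otherwise routine.
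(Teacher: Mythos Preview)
Your proposal is correct, and several parts match the paper essentially verbatim: \eqref{eq:orcliczProb} via Markov, \eqref{eq:squareSubexp} by Taylor expansion plus a geometric-series bound, \eqref{eq:subgaussianOrlicz} by the choice $\lambda=3/4$, and \eqref{eq:subgaussianVar} by comparing second-order Taylor coefficients. Your argument for \eqref{eq:quadExp} via the Gaussian-integral identity is exactly the standard one that the paper merely cites.

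The genuine differences are in \eqref{eq:orliczEvenMoments} and \eqref{eq:orliczMGF}. For the even moments, the paper integrates the tail bound $\bbP(|\bx|>t)\le 2e^{-t^2/b^2}$ against $2kt^{2k-1}\,dt$ and evaluates a Gamma integral; your approach---reading off each term of the nonnegative series $\sum_{k\ge 0}\bbE[\bx^{2k}]/(b^{2k}k!)\le 2$---is shorter and, as you note, immediately yields the sharper bound $\bbE[\bx^{2k}]\le b^{2k}k!$ for $k\ge 1$. For the MGF, the paper never uses that sharper bound: it instead splits into $|\lambda|\le 1/(\sqrt{2}\,b)$ and $|\lambda|>1/(\sqrt{2}\,b)$, handles the first regime by bounding $\bbE[e^{\lambda^2\bx^2}]$ via a geometric series together with the pointwise inequality $e^x\le x+e^{x^2}$, and handles the second regime by the crude quadratic bound $\lambda\bx\le \bx^2/b^2+\lambda^2 b^2/4$. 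Your symmetrization argument is a single uniform computation, but it genuinely needs the factor-one moment bound to avoid the spurious factor of $2$ you flagged; it is nice that your route to \eqref{eq:orliczEvenMoments} supplies exactly this. For \eqref{eq:orliczCenteredVar}, both arguments are Minkowski in $L^k$; your step $\bbE[\bx^2]=\|\bx^2\|_1\le\|\bx^2\|_k$ is cleaner than the paper's detour through the inequality $(2k!)^{1/k}\ge 2$.
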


\begin{IEEEproof}
  Inequality (\ref{eq:orcliczProb}) follows from Proposition 2.5.2 of \cite{vershynin2018high}.

  For (\ref{eq:orliczEvenMoments}), the inequality is trivial at $k=0$. For $k\ge 1$, we have:
  \begin{align*}
    \bbE\left[\bx^{2k}\right]
    &=\int_0^{\infty}
      \bbP\left(\bx^{2k}>t \right)
      dt \\
    &=\int_0^{\infty}
      \bbP\left(
      |\bx|> t^{\frac{1}{2k}}
      \right)
      dt \\
    &\overset{(\ref{eq:orcliczProb})}{\le} 2\int_{0}^{\infty}\exp\left(-\frac{t^\frac{1}{k}}{b^2}\right)dt \\
    &\overset{s=\frac{t^{\frac{1}{k}}}{b^2}}{=}2kb^{2k} \int_0^{\infty}e^{-s}s^{k-1}ds \\
    &=2b^{2k}k!
  \end{align*}

   
  A similar calculation for (\ref{eq:orliczEvenMoments}) is done in the proof of Proposition 2.5.2 in \cite{vershynin2018high}. We separate the even moments, since a tighter bound can be obtained in this case.

  To prove (\ref{eq:orliczMGF}), we follow the methodology from the proof of Proposition 2.5.2 in \cite{vershynin2018high}.
  For $|\lambda|\le \frac{1}{\sqrt{2}b}$ we have:
  \begin{align*}
    \bbE\left[
    e^{\lambda^2 \bx^{2}}
    \right] &=1+\sum_{k=1}^{\infty}\frac{\lambda^{2k}}{k!}\bbE[\bx^{2k}] \\
            &\overset{~(\ref{eq:orliczEvenMoments})}{\le}1+2\sum_{k=1}^{\infty}\lambda^{2k}b^{2k}\\
            &=1+2\frac{\lambda^2 b^2}{1-\lambda^2b^2}\\
            &\le 1+ 4\lambda^2b^2
              \le e^{4\lambda^2 b^2}
  \end{align*}

  Then using $e^{x}\le x + e^{x^2}$, which holds for all $x$, we  have that (\ref{eq:orliczMGF}) holds for $|\lambda|\le \frac{1}{\sqrt{2}b}$.
  
  For $|\lambda| > \frac{1}{\sqrt{2}b}$, we use that
  $$
  \lambda \bx = \left(\frac{\sqrt{2}\bx}{b}\right)\left(\frac{\lambda b}{\sqrt{2}}\right) \le \frac{\bx^2}{b^2}+\frac{\lambda^2 b^2}{4}.
  $$
  So, in this case we also have
  \begin{align*}
    \bbE\left[
    e^{\lambda \bx}
    \right] \le 2e^{\frac{\lambda^2 b^2}{4}} \le e^{4\lambda^2 b^2}.
  \end{align*}
  The final inequality follows because
  $
  e^{\frac{15}{4}\lambda^2 b^2} \ge e^{\frac{15}{8}}>2.
  $

  Inequality (\ref{eq:orliczCenteredVar}) is trivial at $k=0$, so assume that $k\ge 1$. The triangle inequality, followed by (\ref{eq:orliczEvenMoments}) gives
  \begin{align*}
    \|\bx^2-\bbE[\bx^2]\|_k &\le \|\bx^2\|_k + \|\bbE[\bx^2]\|_k \\
    &= \|\bx^2\|_k + \bbE[\bx^2] \\
    &\le b^2\left((2k!)^{1/k} + 2\right).
  \end{align*}
  For $k=1,\ldots,5$ it can be checked that $(2k!)^{1/k}\ge 2$. For $k\ge 6$, the Stirling bound $k!\ge (k/e)^k$ implies $(2k!)^{1/k}\ge 2$. 

  So, for all $k\ge 1$ we have
  $$
  \|\bx^2-\bbE[\bx^2]\|_k \le  2b^2 (2k!)^{1/k}.
  $$
  Raising both sides to the $k$th power proves (\ref{eq:orliczCenteredVar}).

  To show (\ref{eq:squareSubexp}), note  that for all $|\lambda| \le \frac{1}{4b^2}$, we have
  \begin{align*}
    \bbE\left[
    \exp\left(\lambda (\bx^2-\bbE[\bx^2])\right)
    \right] &= 1 + \sum_{k=2}^{\infty} \frac{\lambda^k}{k!} \bbE\left[
              (\bx^2-\bbE[\bx^2])^k
              \right] \\
            &\overset{~(\ref{eq:orliczCenteredVar})}{\le}
              1+2\sum_{k=2}^{\infty}\left(2\lambda b^2\right)^k \\
            &=1+2\frac{(2\lambda b^2)^2}{1-2\lambda b^2} \\
            &\le 1+4 (2\lambda b^2)^2
              \le \exp(4 (2\lambda b^2)^2).
  \end{align*}

  Inequality (\ref{eq:quadExp}) is proved in Appendix A of \cite{wainwright2019high}.

  For (\ref{eq:subgaussianOrlicz}), set $\lambda=\frac{3}{4}$ so that $\frac{1}{\sqrt{1-\lambda}}=2$. Set $t=\sqrt{\frac{8}{3}}\sigma$, so that (\ref{eq:quadExp}) implies that $\bbE\left[e^{\bx^2/t^2}\right]\le 2$. Thus (\ref{eq:subgaussianOrlicz}) holds. 

  To prove (\ref{eq:subgaussianVar}), note that
  \begin{align*}
    \bbE\left[e^{\lambda \bx}\right] &= 1+\lambda^2\left(\frac{\bbE[\bx^2]}{2} + O(\lambda)\right) \\
                                     &\le e^{\frac{\sigma^2\lambda^2}{2}} \\
                                     &= 1+\lambda^2\left(\frac{\sigma^2}{2}+O(\lambda^2) \right)
  \end{align*}
  When $\lambda\ne 0$, re-arranging gives $\bbE\left[\bx^2\right]\le \sigma^2+O(\lambda)$. Taking the limit $\lambda \to 0$  proves (\ref{eq:subgaussianVar}). 
\end{IEEEproof}

\begin{lemma}
  Let $\bx_i$ be independent zero-mean sub-Gaussian random variables with $\bbE\left[e^{\lambda \bx_i}\right]\le e^{\frac{\lambda^2\sigma^2}{2}}$ for all $i=1,\ldots,n$. If $\bx = \begin{bmatrix}\bx_1 & \cdots & \bx_n\end{bmatrix}^\top$, then the covariance is a diagonal matrix that satisfies
  $$
  \bbE\left[\bx\bx^\top\right]\preceq \sigma^2 I_n. 
  $$
\end{lemma}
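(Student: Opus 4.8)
The plan is to establish two facts separately---that the covariance matrix $\bbE\left[\bx\bx^\top\right]$ is diagonal, and that each of its diagonal entries is at most $\sigma^2$---and then combine them to obtain the semidefinite ordering. Neither step requires any of the heavier machinery developed earlier; the only substantive input is the variance bound already proved in Lemma~\ref{lem:subgaussianConstants}.

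First I would compute the off-diagonal entries. For $i\ne j$, independence of $\bx_i$ and $\bx_j$ gives $\bbE\left[\bx_i\bx_j\right]=\bbE[\bx_i]\,\bbE[\bx_j]$, and this vanishes because both variables are zero-mean. Hence $\bbE\left[\bx\bx^\top\right]$ is a diagonal matrix whose $(i,i)$ entry is $\bbE\left[\bx_i^2\right]$. Next I would bound these diagonal entries: since each $\bx_i$ satisfies the sub-Gaussian moment generating function bound $\bbE\left[e^{\lambda\bx_i}\right]\le e^{\lambda^2\sigma^2/2}$ by hypothesis, inequality (\ref{eq:subgaussianVar}) applies directly and yields $\bbE\left[\bx_i^2\right]\le \sigma^2$ for every $i$.

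Finally, I would conclude by examining the difference $\sigma^2 I_n-\bbE\left[\bx\bx^\top\right]$. This matrix is diagonal with entries $\sigma^2-\bbE\left[\bx_i^2\right]\ge 0$, so it is positive semidefinite, which is exactly the assertion $\bbE\left[\bx\bx^\top\right]\preceq \sigma^2 I_n$. There is essentially no obstacle to overcome here: the argument is the elementary observation that independence together with the zero-mean property forces the covariance to be diagonal, combined with the per-coordinate variance estimate (\ref{eq:subgaussianVar}). If anything merits care, it is only citing the correct hypothesis form so that (\ref{eq:subgaussianVar}) is applicable to each $\bx_i$, which holds by assumption.
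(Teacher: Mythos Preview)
Your proposal is correct and follows essentially the same approach as the paper's proof: the paper states that diagonality is immediate from $\bbE[\bx_i\bx_j]=0$ for $i\ne j$, and that the bound on the diagonal follows from (\ref{eq:subgaussianVar}). Your write-up simply expands these two sentences with the natural details.
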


\begin{IEEEproof}
  Diagonality is immediate because $\bbE[\bx_i\bx_j] = 0$ for $i\ne j$. Then the bound on the diagonal follows from (\ref{eq:subgaussianVar}). 
\end{IEEEproof}

A random variable, $\bx$ is $(\nu,\alpha)$-subexponential if for all $|\lambda| < \frac{1}{\alpha}$, the following bound holds:
$$
\bbE\left[
  \exp\left(\lambda (\bx-\bbE[\bx])
  \right)
\right] \le e^{\frac{\lambda^2 \nu^2}{2}}.
$$
(Here, $\alpha$ is just a number, not to be confused with the specific quantity used for the bounds, $\alpha(\epsilon)$, defined in (\ref{eq:collectedConstants}).)

For all $t\ge 0$, a $(\nu,\alpha$)-subexponential random variable satisfies:
\begin{equation}
  \label{eq:subExpTail}
  \bbP\left(\bx-\bbE[\bx]>t\right) \le \exp\left(
    -\frac{1}{2}\min\left\{
      \frac{t^2}{\nu^2},
      \frac{t}{\alpha}
    \right\}
  \right)
\end{equation}
See Proposition 2.9 of \cite{wainwright2019high}.

\begin{lemma}
  Let $\bx_i$ be independent scalar-valued zero-mean random variables such that $\|\bx_i\|_{\psi_2}\le b$ for all $i=1,\ldots,n$, and let $a=\begin{bmatrix}a_1 & \cdots & a_n\end{bmatrix}^\top \in \bbR^n$.
  \begin{subequations}
    \begin{align}
      \label{eq:subgaussianVector}
      \bbE\left[e^{\lambda a^\top \bx}\right] &\le e^{4\lambda^2  b^2\|a\|_2^2}
      \\
     \nonumber 
  \bbP\left(
    \sum_{i=1}^na_i(\bx_i^2-\bbE[\bx_i^2]) > t
      \right) &\le \\
          \label{eq:vectorVarConcentration}
                                              &
               \hspace{-60pt}                                         
                                                \exp\left(-\frac{1}{64} \min\left\{\frac{t^2}{b^4\|a\|_2^2},\frac{t}{b^2\|a\|_{\infty}}\right\}\right)
    \end{align}
    \end{subequations}
\end{lemma}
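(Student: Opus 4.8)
The plan is to prove each inequality by combining the per-coordinate moment generating function bounds established earlier in Lemma~\ref{lem:subgaussianConstants} with the independence of the $\bx_i$, and, for the second inequality, the sub-exponential tail bound (\ref{eq:subExpTail}). For (\ref{eq:subgaussianVector}) I would first factorize the moment generating function using independence,
$$
\bbE\left[e^{\lambda a^\top \bx}\right] = \prod_{i=1}^n \bbE\left[e^{\lambda a_i \bx_i}\right],
$$
and then apply (\ref{eq:orliczMGF}) to each factor, with $\lambda a_i$ playing the role of $\lambda$, to obtain $\bbE\left[e^{\lambda a_i \bx_i}\right]\le e^{4\lambda^2 a_i^2 b^2}$. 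Multiplying these bounds and collecting the exponent yields $e^{4\lambda^2 b^2 \sum_i a_i^2} = e^{4\lambda^2 b^2 \|a\|_2^2}$, which is the claim. This step is essentially immediate.

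For (\ref{eq:vectorVarConcentration}), the key is to show that the centered sum $\bS := \sum_{i=1}^n a_i(\bx_i^2 - \bbE[\bx_i^2])$ is sub-exponential with explicit parameters. By independence its moment generating function again factorizes, and applying (\ref{eq:squareSubexp}) to each factor with $\lambda a_i$ in place of $\lambda$ shows that, provided $|\lambda a_i| \le \frac{1}{4b^2}$ for every $i$, each factor is bounded by $\exp(16 b^4 \lambda^2 a_i^2)$. The admissibility constraint holds simultaneously for all coordinates precisely when $|\lambda|\le \frac{1}{4 b^2 \|a\|_\infty}$, and in that range the product is bounded by $\exp(16 b^4 \lambda^2 \|a\|_2^2)$. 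Reading this against the definition of $(\nu,\alpha)$-subexponentiality identifies $\nu^2 = 32 b^4 \|a\|_2^2$ and $\alpha = 4 b^2 \|a\|_\infty$.

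Finally I would invoke (\ref{eq:subExpTail}) with these parameters, which gives
$$
\bbP(\bS > t) \le \exp\left(-\frac{1}{2}\min\left\{\frac{t^2}{32 b^4 \|a\|_2^2}, \frac{t}{4 b^2 \|a\|_\infty}\right\}\right) = \exp\left(-\min\left\{\frac{t^2}{64 b^4 \|a\|_2^2}, \frac{t}{8 b^2 \|a\|_\infty}\right\}\right),
$$
and since $\frac{t}{8 b^2 \|a\|_\infty} \ge \frac{t}{64 b^2 \|a\|_\infty}$ while the quadratic terms already agree, this is at most the stated bound with constant $\frac{1}{64}$. The only real subtlety, and the point I would be most careful about, is the bookkeeping: correctly propagating the coordinatewise admissibility condition $|\lambda a_i|\le \frac{1}{4b^2}$ through the product to read off $\alpha = 4b^2\|a\|_\infty$, and then confirming that the deliberately loose target constant in the linear regime is dominated by the sharper $\frac{1}{8}$ that falls out of the computation.
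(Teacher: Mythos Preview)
Your proposal is correct and follows essentially the same approach as the paper: factorize the MGF by independence, apply (\ref{eq:orliczMGF}) coordinatewise for (\ref{eq:subgaussianVector}), apply (\ref{eq:squareSubexp}) coordinatewise to identify the sum as $(4\sqrt{2}b^2\|a\|_2,\,4b^2\|a\|_\infty)$-sub-exponential, and then invoke (\ref{eq:subExpTail}). Your bookkeeping on the constants matches the paper's, and your explicit check that the linear-regime constant $\tfrac{1}{8}$ dominates the target $\tfrac{1}{64}$ is exactly the simplification the paper makes in its final displayed inequality.
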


\begin{IEEEproof}
  To prove (\ref{eq:subgaussianVector}), we use independence and (\ref{eq:orliczMGF}):
  \begin{align*}
    \bbE\left[e^{\lambda a^\top \bx}\right] &=\prod_{i=1}^n \bbE\left[e^{\lambda a_i \bx_i}\right]
                                              \overset{~(\ref{eq:orliczMGF})}{\le}e^{4\lambda^2 b^2 \sum_{i=1}^n a_i^2} 
  \end{align*}

  Now we prove (\ref{eq:vectorVarConcentration}).
  Without loss of generality, assume that $a_i\ne 0$, since the terms with $a_i=0$ can be dropped from the sum. Inequality (\ref{eq:squareSubexp}) shows that $\bx_i^2$ are all $(4\sqrt{2} b^2,4b^2)$-subexponential. It follows that $a_i\bx_i^2$ are all $(4\sqrt{2} b^2 |a_i|,4b^2|a_i|)$-subexponential. Direct calculation using independence shows that if $|\lambda| \le \frac{1}{4b^2 \|a\|_{\infty}}$, then
    $$
    \bbE\left[
      \exp\left(\lambda
        \sum_{i=1}^n a_i\bx_i^2
        \right)
      \right] \le \exp\left(
        (4b^2 \|a\|_2)^2 \lambda^2
      \right).
      $$
      Thus $\sum_{i=1}^na_i\bx_i^2$ is $(4\sqrt{2}b^2 \|a\|_2, 4b^2 \|a\|_{\infty})$-subexponential.

      Inequality (\ref{eq:vectorVarConcentration}) follows from (\ref{eq:subExpTail}) after noting that
      $$
      \min\left\{
        \frac{t^2}{32 b^4 \|a\|_2^2},\frac{t}{4b^2 \|a\|_{\infty}}
        \right\}\hspace{-2pt} \ge \hspace{-2pt} \frac{1}{32} \min\left\{
        \frac{t^2}{b^4 \|a\|_2^2},\frac{t}{b^2 \|a\|_{\infty}}
        \right\}.
      $$
    \end{IEEEproof}

    The following is the Hanson-Wright inequality stated with an explicit constant. 
    \begin{theorem}
      \label{thm:HW}
      Let $A\in\bbC^{n\times n}$ and assume that either $A\in\bbR^{n\times n}$ or $A$  is Hermitian.
      Let $\bx_i$ independent zero-mean scalar-valued random variables with $\|\bx_i\|_{\psi_2}\le b$ for $i=1,\ldots,n$. Let $\bx=\begin{bmatrix}\bx_1 & \cdots &\bx_n\end{bmatrix}^\top$. For all $t\ge 0$,  
      \begin{multline}
        \label{eq:HW}
        \bbP\left(\bx^\top A\bx - \bbE\left[\bx^\top A\bx\right] > \epsilon\right) \le \\
        2\exp\left(-\frac{1}{2048}\min\left\{\frac{\epsilon^2}{b^4\|A\|_F^2},\frac{\epsilon}{b^2\|A\|_2}\right\}\right)
      \end{multline}
    \end{theorem}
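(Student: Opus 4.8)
The plan is to follow the classical Rudelson--Vershynin route: symmetrize $A$, split the centered quadratic form into a diagonal and an off-diagonal piece, bound each by a sub-exponential tail through its moment generating function, and then carefully track every numerical factor so as to reach the stated constant $\tfrac{1}{2048}$. First I would reduce to a real symmetric matrix. Since $\bx$ is real-valued, when $A$ is Hermitian the antisymmetric imaginary part of $A$ annihilates against $\bx$, so $\bx^\top A\bx=\bx^\top\mathrm{Re}(A)\bx$; and when $A$ is real, $\bx^\top A\bx=\bx^\top\tfrac12(A+A^\top)\bx$. The symmetrized matrix $B$ obeys $\|B\|_2\le\|A\|_2$ and $\|B\|_F\le\|A\|_F$, so it suffices to treat real symmetric $A$. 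I would then write
$$\bx^\top A\bx-\bbE[\bx^\top A\bx]=\underbrace{\sum_i A_{ii}\bigl(\bx_i^2-\bbE[\bx_i^2]\bigr)}_{=:\,D}+\underbrace{\sum_{i\ne j}A_{ij}\bx_i\bx_j}_{=:\,O}$$
and use the union bound $\bbP(D+O>\epsilon)\le\bbP(D>\epsilon/2)+\bbP(O>\epsilon/2)$; this is the source of the leading factor $2$ in the statement.

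The diagonal term is handled directly by (\ref{eq:vectorVarConcentration}) applied with coefficients $a_i=A_{ii}$: since $\|a\|_2\le\|A\|_F$ and $\|a\|_\infty\le\|A\|_2$, this already delivers a tail of exactly the required shape, with a constant comfortably larger than $\tfrac{1}{2048}$.

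The off-diagonal term is the crux, and I would control it through its MGF $\bbE[e^{\lambda O}]$ by a Gaussian-chaos argument. Applying the standard decoupling inequality for convex functions (as in \cite{vershynin2018high}) gives $\bbE[e^{\lambda O}]\le\bbE[e^{4\lambda\,\bx^\top A\bx'}]$ for an independent copy $\bx'$. Conditioning on $\bx'$ and invoking the linear-form bound (\ref{eq:subgaussianVector}) reduces matters to estimating $\bbE_{\bx'}[e^{64\lambda^2 b^2\|A\bx'\|_2^2}]$. To linearize this remaining quadratic form I would introduce an auxiliary standard Gaussian vector $\mathbf{g}$ and use the identity $e^{\mu\|A\bx'\|_2^2}=\bbE_{\mathbf{g}}[e^{\sqrt{2\mu}\,\mathbf{g}^\top A\bx'}]$; exchanging expectations, applying (\ref{eq:subgaussianVector}) a second time over $\bx'$, and finally using rotation-invariance of $\mathbf{g}$ to diagonalize $A$ together with the scalar identity $\bbE[e^{s\mathbf{g}_k^2}]=(1-2s)^{-1/2}$ and the elementary bound $(1-2s)^{-1/2}\le e^{2s}$ yields $\bbE[e^{\lambda O}]\le\exp\bigl(c\,\lambda^2 b^4\|A\|_F^2\bigr)$ valid for $|\lambda|\lesssim(b^2\|A\|_2)^{-1}$. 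Thus $O$ is $(\nu,\alpha)$-sub-exponential with $\nu\propto b^2\|A\|_F$ and $\alpha\propto b^2\|A\|_2$, and feeding these parameters into (\ref{eq:subExpTail}) produces the matching tail for $O$.

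The main obstacle is not the skeleton of the argument but the compounding of constants along the off-diagonal chain: the decoupling factor of $4$, the two applications of (\ref{eq:subgaussianVector}) (each contributing a factor of $4$ in the exponent), the Gaussian linearization, and the eigenvalue product controlled via $(1-2s)^{-1/2}\le e^{2s}$ all multiply together, and the permissible range of $\lambda$ determines $\alpha$. I would track these to pin down the sub-exponential parameters of $O$, note that $O$ (not $D$) is the binding term, and then absorb the $\epsilon\mapsto\epsilon/2$ loss from the union bound (which costs a factor $4$ on the $\epsilon^2$ branch and $2$ on the $\epsilon$ branch of the minimum) to arrive at the clean exponent $\tfrac{1}{2048}$.
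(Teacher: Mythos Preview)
Your proposal is correct and matches the paper's proof essentially step for step: symmetrize to a real $B$, split into diagonal and off-diagonal pieces, handle the diagonal by (\ref{eq:vectorVarConcentration}), and show the off-diagonal part is $(16b^2\|B\|_F,16b^2\|B\|_2)$-sub-exponential via decoupling, two applications of (\ref{eq:subgaussianVector}), a Gaussian linearization, and the Gaussian eigenvalue product bound. The only cosmetic difference is that you invoke the decoupling inequality from \cite{vershynin2018high} as a black box, whereas the paper writes out its proof explicitly via the Bernoulli-partition identity $\sum_{i\ne j}B_{ij}\bx_i\bx_j=4\,\bbE_{\bdelta}[\bx^\top B_{\bdelta}\bx]$ together with Jensen's inequality and the observation that $\bx^\top B_{\bdelta}\bx$ and $\bx^\top B_{\bdelta}\bx'$ are identically distributed; this is precisely the standard proof of the lemma you cite, so the two arguments are the same.
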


    \begin{IEEEproof}
      We sketch a variation of the proof of the Hanson-Wright inequality from \cite{vershynin2018high,rudelson2013hanson}, and make the associated constants explicit.

      Similar to the proof of Lemma~\ref{lem:gaussianHS}, let $B=\frac{1}{2}(A+A^\top)$ so that $B$ is a real symmetric matrix with $\bx^\top A \bx=\bx^\top B\bx$, $\|B\|_2\le \|A\|_2$, and $\|B\|_F\le \|A\|_F$.  
      
      First the probability is bounded in terms of the diagonal and off-diagonal terms:
      \begin{subequations}
        \begin{align}
          \nonumber
          \MoveEqLeft
          \bbP\left(\bx^\top A\bx - \bbE\left[\bx^\top A\bx\right] > \epsilon\right)\le
          \\
          &
            \bbP\left(\sum_{i=1}^n B_{ii} (\bx_i^2-\bbE[\bx_i^2]) > \epsilon/2\right)  + \\
          \label{eq:offdiagonalProb}
          &\bbP\left(\sum_{i\ne j} B_{ij}\bx_i\bx_j > \epsilon/2\right).
    \end{align}
    \end{subequations}

    If $a = \begin{bmatrix}B_{11} & \cdots  & B_{nn}\end{bmatrix}$, we have that $\|a\|_2 \le \|B\|_F\le\|A\|_F$ and $\|a\|_{\infty}\le\|B\|_2\le \|A\|_2$. So (\ref{eq:vectorVarConcentration}) implies that 
    \begin{multline*}
    \bbP\left(\sum_{i=1}^n B_{ii} (\bx_i^2-\bbE[\bx_i^2]) > \epsilon/2\right)\le\\
    \exp\left(-\frac{1}{256} 
\min\left\{
        \frac{\epsilon^2}{b^4 \|A\|_F^2},\frac{\epsilon}{b^2 \|A\|_{2}}\right\}\right).
  \end{multline*}

  We will show that the off-diagonal term, $\sum_{i\ne j}B_{ij}\bx_i\bx_j$, is $(16b^2 \|B\|_F,16b^2\|B\|_2)$-sub-exponential, and thus $(16b^2\|A\|_F,16b^2\|A\|_2)$-sub-exponential. Then (\ref{eq:subExpTail}) imples:
  \begin{align*}
    \MoveEqLeft
    \bbP\left(\sum_{i\ne j} B_{ij}\bx_i\bx_j > \epsilon/2\right)
    \\
    &\le
                                                           \exp\left(
                                                           -\frac{1}{2}\min
                                                           \left\{
                                                            \frac{
      \left(\epsilon/2\right)^2}{256 b^4 \|A\|_F^2},
      \frac{(\epsilon/2)}{16 b^2 \|A\|_2}
                                                            \right\}
      \right) \\
    &\le \exp\left(-\frac{1}{2048}\min\left\{\frac{\epsilon^2}{b^4\|A\|_F^2},\frac{\epsilon}{b^2\|A\|_2} \right\} \right).
  \end{align*}

  So we have
  \begin{multline*}
  \bbP\left(\bx^\top A\bx - \bbE\left[\bx^\top A\bx\right] > \epsilon\right) \le \\
  2\exp\left(-\frac{1}{2048}\min\left\{\frac{\epsilon^2}{b^4\|A\|_F^2},\frac{\epsilon}{b^2\|A\|_2} \right\} \right).
\end{multline*}

What remains is to prove that $\sum_{i\ne j} B_{ij}\bx_i \bx_j$ is sub-exponential. 
 Let $\bdelta_i$ be IID Bernoulli random variables with $\bbP(\bdelta_i=1)=\frac{1}{2}$. Let $\bdelta = \begin{bmatrix}\bdelta_1  & \cdots & \bdelta_n\end{bmatrix}^\top$ and set $B_{\bdelta} = \diag(\bdelta)B\diag(1_{n\times 1}-\bdelta)$.  Then
  $$
  \sum_{i\ne j}B_{ij}\bx_i\bx_j = 4\bbE_{\bdelta}[\bx^\top B_{\bdelta}\bx],
  $$
  where $\bbE_{\bdelta}$ corresponds to averaging over $\bdelta$ while keeping $\bx$ fixed.

  Let $\bx'$ be identically distributed to $\bx$ and independent of $\bx$.
  Then $\bx^\top B_{\bdelta}\bx$ is identically distributed to $\bx^\top B_{\bdelta}\bx'$. So, we have
  \begin{align*}
    \MoveEqLeft
    \bbE\left[
    \exp\left(\lambda   \sum_{i\ne j}B_{ij}\bx_i\bx_j\right)
    \right]
    =
     \bbE\left[
    \exp\left(4\lambda  \bbE_{\bdelta}\left[\bx^\top B_{\bdelta} \bx\right]\right)
      \right] \\
    &\overset{\textrm{Jensen}}{\le} \bbE\left[
      \exp\left(
      4\lambda \bx^\top B_{\bdelta} \bx
      \right)
      \right] \\
    &=\bbE\left[
      \exp\left(4\lambda \bx^\top B_{\bdelta}\bx'\right)
      \right] \\
    &\overset{~(\ref{eq:subgaussianVector})}{\le}
      \bbE\left[
      \exp\left(16 b^2 \lambda^2 \|B_{\bdelta}\bx'\|_2^2\right)
      \right]
  \end{align*}

  Let $\bg\in \bbR^n$ be a mean-zero Gaussian vectors with identity covariance independent of $\bx'$, and $\bdelta$.  
  \begin{align*}
       \bbE\left[
      \exp\left(16 b^2 \lambda^2 \|B_{\bdelta}\bx'\|_2^2\right)
      \right]
    &\overset{\mu:=\sqrt{32}b\lambda}{=}
      \bbE\left[
      \exp\left(
      \frac{1}{2}\mu^2 \|B_{\bdelta}\bx'\|_2^2
      \right)
      \right] \\
    &= \bbE\left[
      \exp\left(\mu \bg^\top B_{\bdelta}\bx'
      \right)\right] \\
    &\overset{~(\ref{eq:subgaussianVector})}{\le}
      \bbE\left[\exp\left(
      4\mu^2 b^2 \|B_{\bdelta}^\top \bg\|_2^2
      \right)\right] \\
    &=       \bbE\left[\exp\left(
      128\lambda^2 b^4 \|B_{\bdelta}^\top \bg\|_2^2
      \right)\right] .
  \end{align*}
  Now let $\bw = \bV\bg$ where $\bV$ is an orthogonal matrix such that $\bV B_{\bdelta} B_{\bdelta}^\top \bV^\top=\diag(\bs_1^2,\ldots,\bs_n^2)$, where $\bs_1,\ldots,\bs_n$ are the singular values of $B_{\bdelta}$. Then $\bw$ is also normally distributed with mean $0$ and covariance $I$. Let $\bbE_{\bw}$ denote expectation with respect to $\bw$ while holding the other variables fixed. 

  Now if $|\lambda| \le \frac{1}{16 b^2 \|B\|_2}$ we have $128\lambda^2 b^4 \bs_i^2 \le \frac{1}{2}$, since $\bs_i^2\le \|B_{\bdelta}\|_2^2 \le \|B\|_2^2$. In this case we have
  \begin{align*}
    \MoveEqLeft
\bbE\left[\exp\left(
      64\lambda^2 b^4 \|B_{\bdelta}^\top \bg\|_2^2
    \right)\right] \\
    &= \bbE\left[\prod_{i=1}^n \bbE_{\bw}\left[
                    \exp\left(128\lambda^2 b^4 \bs_i^2 \bw_i^2\right)
                    \right]\right] \\
                  &=\bbE\left[\prod_{i=1}^n\frac{1}{\sqrt{1-128\lambda^2 b^4\bs_i^2}}\right] \\
                  &\le \bbE\left[\prod_{i=1}^n\exp\left( 128\lambda^2 b^4\bs_i^2 \right) \right] \\
    &\overset{\|B_{\bdelta}\|_F\le \|B\|_F}{\le} e^{128\lambda^2 b^4 \|B\|_F^2}.
  \end{align*}
  The first inequality follows because
  $
  \frac{1}{\sqrt{1-x}} \le e^x
  $
  for all $x\in [0,1/2]$. It follows that the off-diagonal term is $(16 b^2 \|B\|_F,16 b^2 \|B\|_2)$-sub-exponential. 
   \end{IEEEproof}

\section{Biography Section}

\begin{IEEEbiography}[{\includegraphics[width=.8in,height=1in,clip,keepaspectratio]{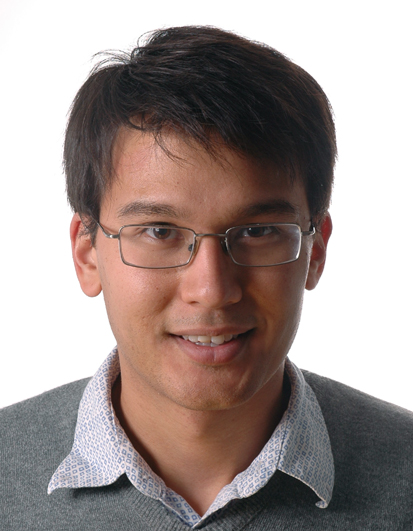}}] {Andrew Lamperski}
(S'05--M'11) received the B.S. degree in biomedical engineering and
mathematics in 2004 from the Johns Hopkins University, Baltimore, MD,
and the Ph.D. degree in control and dynamical systems in 2011 from the
California Institute of Technology, Pasadena. He held postdoctoral
positions in control and dynamical systems at the California Institute
of Technology from 2011--2012 and in mechanical engineering at The
Johns Hopkins University in 2012. From 2012--2014,
did
postdoctoral work in the Department of Engineering, University of
Cambridge, on a scholarship from the Whitaker International
Program. In 2014, he joined the Department of Electrical and Computer
Engineering, University of Minnesota, where he is currently an Associate Professor.
\end{IEEEbiography}\vspace{-40px}

\vfill

\end{document}